
\documentclass[english]{compositio}
\usepackage[latin1]{inputenc}
\usepackage{psfrag}
\usepackage{mathrsfs,amsthm,amsmath,amssymb}
\usepackage{hyperref}
\theoremstyle{plain}
\newtheorem{thm}{Theorem}[section]
\newtheorem{prop}[thm]{Proposition}
\newtheorem{lem}[thm]{Lemma}
\newtheorem{conj}[thm]{Conjecture}
\newtheorem{cor}[thm]{Corollary}
\theoremstyle{definition}

\newtheorem{exa}[thm]{Example}
\theoremstyle{remark}

\newcommand{\eqdef}{\stackrel{\rm def}{=}}
\usepackage{babel}

\newcommand{\Des}{\mathrm{Des}}

\newcommand{\ZZ}{\mathbb{Z}}
\newcommand{\Z}{\mathbb{Z}}  

\newcommand{\PP}{\mathbb{P}}  
\newcommand{\N}{\mathbb{N}} 
\newcommand{\Q}{\mathbb{Q}}

\newcommand{\qs}{QSym}
\newcommand{\two}{\textrm{\bf {\large 2}}}
\newcommand{\twoindex}{\textrm{\bf {\small 2}}}

\def\({\left(}
\def\){\right)}


\begin{document}
\title[Peak algebras and Kazhdan-Lusztig polynomials]{Peak algebras, 
paths in the Bruhat graph \\
and Kazhdan-Lusztig polynomials\textsuperscript{*} }
\author{Francesco Brenti}
\email{brenti@mat.uniroma2.it}
\address[Francesco Brenti]{Dipartimento di matematica, Universit\`a di Roma "Tor Vergata", Via della Ricerca Scientifica 1, Roma 00133, Italy}
\author{Fabrizio Caselli}
\email{fabrizio.caselli@unibo.it}
\address[Fabrizio Caselli]{Dipartimento di matematica, Universit\`a di Bologna, Piazza di Porta San Donato 5, Bologna 40126, Italy}
\classification{05E10}
\keywords{Quasi-symmetric functions, Coxeter groups, Reflection orderings, }
\thanks{\textsuperscript{*}Part of this work was carried out while the authors were at the
Mathematisches Forschungsinstitut Oberwolfach as part of their RiP Program.
They would like to thank the MFO for support and for providing an ideal
environment for research.}


\begin{abstract}
We give a new characterization of the peak subalgebra of the algebra of quasisymmetric functions and use this to construct a new basis for this subalgebra. As an application
of these results we obtain a combinatorial formula for the
Kazhdan-Lusztig polynomials which holds in complete generality and
is simpler and more explicit than any existing one. We then show that, 
in a certain sense, this formula cannot be simplified. 
\end{abstract}

\maketitle 
\section{Introduction}

In their seminal paper \cite{K-L} Kazhdan and Lusztig introduced a family of polynomials, indexed 
by pairs of elements of a Coxeter group $W$, that are now known as the Kazhdan-Lusztig polynomials of $W$
(see, e.g., \cite{BjBr} or \cite{Hum}).
These polynomials play a fundamental role in several areas of mathematics, including representation
theory, the geometry of Schubert varieties,
the theory of Verma modules, Macdonald polynomials, canonical bases, immanant 
inequalities, and the Hodge theory of Soergel bimodules (see, e.g., 
\cite{And, BeiBer, BL, BK, EW, FKK, Hai, H, Ha, KL2, Ug}, and the 
references cited there).
Quasisymmetric functions were introduced by Gessel in \cite{Ges} and are related to
many topics in algebra, combinatorics, and geometry including 
descent algebras, Macdonald polynomials, Kazhdan-Lusztig polynomials, enumeration, convex polytopes, noncommutative symmetric functions, Hecke algebras, and Schubert polynomials (see, e.g., 
\cite{As, BayBil, BB, BHV, BH, DKLT, LMW, M-R}, and the references cited there).

In this work we give a new characterization of the peak subalgebra of the algebra of quasisymmetric functions and use this to construct a new basis for this subalgebra with
certain properties. As an application of these results we obtain a 
combinatorial formula for the Kazhdan-Lusztig polynomials of a Coxeter group $W$  
which holds in complete generality and is simpler
and more explicit than any existing one. More precisely, this formula expresses the Kazhdan-Lusztig polynomial of two elements 
$u,v \in W$ as a sum of at most $f_{\ell(u,v)}$ summands ($f_n$ being the $n$-th Fibonacci number), each one of which is the product
 of a number, which depends on $u$ and $v$, and a polynomial, independent of $u,v$, and $W$, and we provide a combinatorial interpretation for both the number and  the polynomial.
We then investigate linear relations between the numbers involved in the formula
and show that there are no ``homogeneous'' relations even for lower intervals of a fixed
rank. A consequence of this result is that the formula that we obtain cannot be simplified by means of linear relations if it is to hold in complete generality. Our proof uses some
new total reflection orderings which may be of independent interest.

The organization of the paper is as follows. In  the next section we collect some notation, definitions, and results
that are needed in the rest of this work. In \S 3 we give a new characterization of the peak subalgebra
of the algebra of quasisymmetric functions (Theorem \ref{dualBB}). 
In \S 4, using this characterization, we construct a basis 
of the peak subalgebra of 
the algebra of quasisymmetric functions with certain properties (Theorem \ref{basis}). In \S 5, using the results in the previous ones, we obtain a combinatorial formula for the 
Kazhdan-Lusztig polynomials which holds in
complete generality (Theorem \ref{finalmain}), and is simpler and more explicit than any existing one. Finally, in \S 6, we study linear relations between the numbers involved in the formula and show, as a consequence of our results
(Corollary \ref{norel}), that the
formula obtained in \S 5 cannot be ``linearly'' simplified.

\section{Preliminaries}

We let $\PP \stackrel{\rm def}{=} \{ 1,2,3, \ldots \} $ ,
$\N \stackrel{\rm def}{=} \PP \cup \{ 0 \} $, 
$\mathbb  Z$ be the ring of integers,  $\mathbb Q$ be the field of rational numbers,
and $\mathbb R$ be the field of real numbers;
for $a \in \N$ we  let $[a] \stackrel{\rm def}{=}
\{ 1,2, \ldots , a \} $ (where $[0] \stackrel{\rm def}{=}
\emptyset $). 
Given $n, m \in \PP$, $n \leq m$, we let
$[n,m]
\stackrel{\rm def}{=} [m] \setminus [n-1]$, and we define
similarly $(n,m]$, $(n,m)$, and $[n,m)$. 
For $S \subseteq \Q$ we 
write $S= \{ a_{1}, \ldots , a_{r} \} _{<}$ to mean that $S= \{ a_{1},
\ldots , a_{r} \}$ and $a_{1} < \cdots < a_{r} $.
The cardinality of a set $A$ will be denoted by  
$|A|$. Given a polynomial $P(q)$, and $i \in \Z$, we
denote by $[q^{i}](P(q))$ the coefficient of $q^{i}$ in $P(q)$. Given $j\in \ZZ$ we let $\chi_{\textrm{odd}}(j)=1$ if $j$ is odd 
and $\chi_{\textrm{odd}}(j)=0$ if $j$ is even, and $\chi_{\textrm{even}}(j)=1-\chi_{\textrm{odd}}(j)$. We let $f_n$ be the $n$-th Fibonacci number defined recursively by $f_0\eqdef 0$, $f_1\eqdef 1$ and $f_{n}\eqdef f_{n-1}+f_{n-2}$ for $n>1$.

Recall that a {\em composition} of
 $n$ ($n \in \PP$) is a 
sequence $(\alpha _{1}, \ldots , \alpha _{s})$ (for some $s \in {\mathbb
P}$) of positive 
integers such that $\alpha _{1}+ \cdots +
\alpha _{s}=n$ (see, e.g., \cite[p. 17]{ECI}). 
 For $n \in \PP$ we let $C_{n}$ be the set of all compositions of
$n$ and $C \stackrel{\rm def}{=} \bigcup _{n \geq 1} C_{n}$.
  Given $\beta \in C$ we denote by $l(\beta )$ the 
number of  parts of $\beta $, by $\beta _{i}$, for $i=1, \ldots ,
l(\beta )$, the $i$-th part of $\beta $ (so that $\beta = (\beta 
_{1}, \beta _{2}, \ldots , \beta _{l(\beta )})$), and we let 
$| \beta | \eqdef \sum _{i=1}^{l(\beta )} \beta _{i}$,
and $T(\beta ) \eqdef \{ \beta _{r}, \beta _{r}+
\beta _{r-1}, \ldots , \beta _{r} + \cdots + \beta _{2} \}$ where
$r \stackrel{\rm def}{=} l(\beta )$. 
 Given $(\alpha_{1},...,\alpha_{s}),(\beta_{1},...,\beta_{t}) \in
C_{n}$ we say that $(\alpha_{1},...,\alpha_{s})$ {\em refines}
$(\beta_{1},...,\beta_{t})$ if there exist $0 < i_{1} < i_{2}
< \cdots < i_{t-1} < s$ such that $\sum_{j=i_{k-1}+1}^{i_{k}}
\alpha_{j} = \beta_{k}$ for $k=1,\ldots,t$ (where $i_{0}
\stackrel{\rm def}{=} 0$
, $i_{t} \stackrel{\rm def}{=} 
s$). We then write $(\alpha_{1},...,\alpha_{s}) \preceq
(\beta_{1},...,\beta_{t})$. It is well known, and 
easy to see, that the map $\alpha 
\mapsto T(\alpha )$ is
an isomorphism from $(C_{n}, \preceq )$ to  
 the Boolean algebra $B_{n-1}$ of subsets of
$[n-1]$, ordered by reverse inclusion.

We let ${\two}\eqdef \{0,1\}$ and for $n\in \mathbb N$ we let ${\two}^n$ be the set of all 0-1 words of length $n$ 
\[
   {\two}^n=\{E=(E_1\cdots E_n):\, E_i\in{\two}\},
\]
$\varepsilon\in \two^0$ be the empty word, and ${\two}^*\eqdef\cup_{n\geq 0}{\two}^n$. We consider on ${\two}^*$ the monoid 
structure given by concatenation. We say that $E\in \two^*$ is \emph{sparse} if either $E=\varepsilon$ or $E$ belongs to the submonoid generated by 0 and 01 and we let $\two^*_s$ be the monoid of sparse sequences. We also let 
$1{\two}^*\eqdef \{1E:\, E\in {\two}^*\}$ and  $\overline{1\two^*}\eqdef 1\two^*\cup \{\epsilon\}$, and we similarly 
define ${\two}^*1$ and $\overline{{\two}^*1}$. If $E\in {\two}^n$ we let $\check E\eqdef (E_1\cdots E_{n-1}(1-E_n))$ if $n\geq 1$,
 and $\check \varepsilon=\varepsilon$, $\overline{E}$ be the complementary string
 (so the $i$-th element of $\overline{E}$ is $1$ if and only if the $i$-th element of $E$ is $0$, for $i\in [n]$), and $E^{op}$ be
 the opposite string of $E$ (so the $i$-th element of $E^{op}$ is $1$ if and only if the $n+1-i$-th element of $E$ is $1$, 
for $i\in [n]$). For notational convenience, for  $1\leq i\leq j$ we also let $E_{i,j}\eqdef 0^{i-1}10^{j-i}$ and $E_{0,j}=0^j$.
 Finally, we let $S(E)\eqdef \{i\in[n]:\, E_i=1\}$, $m_i(E) \eqdef | \{ j \in [n] : E_j = i \} |$,
for $i \in \{ 0,1 \}$, and $\ell(E) \eqdef n$.
We consider on $\two ^n$ the natural partial order $\leq$ defined by $E \leq F$ 
if and only if $S(E) \subseteq S(F)$. 
 
We assume here that the reader is familiar with the basics of the theory of
quasisymmetric functions, for example, as described in \cite[\S 7.19]{ECII}.
We denote by ${\mathcal Q} \subset \Q[[x_{1},x_{2}, \dots ]]$ the algebra of all quasisymmetric functions (with rational coefficients).  
${\mathcal Q}$ is a graded algebra with the
usual grading of power series; we denote by ${\mathcal Q}_{i}$ the
i$^{th}$ homogeneous part of ${\mathcal Q}$, so
${\mathcal Q} = {\mathcal Q}_{0}\oplus {\mathcal Q}_{1} \oplus \cdots .$
 If $E\in \two^{n-1}$ and $S(E)=\{s_1,\ldots,s_t\}_<$ we let $oc(E)=(n-s_t,s_t-s_{t-1},\cdots,s_2-s_1,s_1)$; $oc(E)$ is a composition of $n$ and we
  denote by $M_E$ the monomial quasisymmetric function $M_{oc(E)}$. So, for example, $M_{001010}=
\sum_{1 \leq i_1 < i_2 < i_3} x_{i_1}^{2} x_{i_2}^2 x_{i_3}^3$.
In turn, for $F\in \two^{n-1}$, we let $L_F=\sum_{E \geq F} M_E$ be the fundamental quasisymmetric function.
Note that what we denote $L_E$ is denoted by $L_{oc(E)}$ in \cite[\S 7.19]{ECII}, 
and that the degree of $M_{E}$ and $L_{E}$ is $\ell(E)+1$.

An interesting subalgebra of ${\mathcal Q}$ is the subspace $\Pi$ of \emph{peak functions} 
(see \cite{BHV} and \cite{Ste}). The following result is known (see \cite[Proposition 1.3]{BHV}), and can be taken as the definition of $\Pi$.

\begin{thm}
\label{BBrel}
Let $F = \sum_{E \in \twoindex ^{\ast}} c_E \, M_E \in {\mathcal Q}$.
Then the following are equivalent:
\begin{enumerate}
\item[i)]
$F \in \Pi$;
\item[ii)]
for all $E \in \overline{\two ^{\ast} 1}$, $F \in \overline{1 \, \two ^{\ast}}$, and $j \geq 1$ 
\begin{equation}
\label{}
 \sum _{i=1}^{j} (-1)^{i-1} \, c_{E E_{i,j} F} 
= 2\chi_{\textrm{odd}}(j) c_{E 0^j F}.  
\end{equation}
\end{enumerate}
\end{thm}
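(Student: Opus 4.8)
The plan is to prove the two implications separately. For each fixed degree $n$, both (i) and (ii) describe linear subspaces of the homogeneous component $\mathcal{Q}_n$, so I would fix $n$ throughout and argue by combining a direct verification with a dimension count. The essential input is that $\Pi$ is spanned in each degree by Stembridge's peak functions, whose expansions $\sum_E c_E M_E$ in the monomial basis are explicitly known: each coefficient $c_E$ equals a power of $2$ (governed by the number of parts of the composition $oc(E)$) times the indicator of a compatibility condition between $oc(E)$ and a fixed peak set.

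For (i) $\Rightarrow$ (ii) it then suffices to check the stated relations on this spanning set, where the $c_E$ are given by the closed formula. I would isolate first the case $j=1$, which reads $c_{E1G}=2\,c_{E0G}$ for flanking words $E\in\overline{\two^*1}$ and $G\in\overline{1\two^*}$; here inserting a single $1$ between two existing break points creates one extra part, and the factor $2$ is exactly the free sign attached to the new distinct absolute value in the enriched $P$-partition model. For general $j$ I would substitute the coefficient formula into $\sum_{i=1}^{j}(-1)^{i-1}c_{EE_{i,j}G}$ and reduce the identity to a single alternating sum over the $j$ positions at which the length-$j$ block may be cut. The sign cancellation in this sum collapses it to $0$ when $j$ is even and to twice the uncut coefficient $c_{E0^jG}$ when $j$ is odd, which is precisely the factor $2\chi_{\textrm{odd}}(j)$.

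For (ii) $\Rightarrow$ (i) I would run a dimension count. Let $V\subseteq\mathcal{Q}_n$ be the space cut out by the relations in (ii). Using these relations one can express the coefficient $c_W$ of any non-sparse word $W$ (one containing a factor $11$ or beginning with $1$) in terms of coefficients of words that are strictly smaller in a suitable term order, and hence eventually in terms of the $c_W$ with $W\in\two^*_s$: for instance the $j=1$ relation rewrites a boundary or interior $111$, while the relation with general $j$ and $E=\varepsilon$ rewrites a leading block $10^{j-1}1\cdots$ in terms of words beginning with $0$. It follows that a function in $V$ is determined by its coefficients on sparse words, so $\dim V\le\#\{W\in\two^*_s:\ \ell(W)=n-1\}$. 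A short recursion (a nonempty sparse word ends in $0$ or in $01$) identifies this count with $f_n$, which is also $\dim\Pi_n$. Combined with the inclusion $\Pi\subseteq V$ coming from the forward direction, this forces $V=\Pi$ and yields (ii) $\Rightarrow$ (i).

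The main obstacle is the forward direction: it depends on having the monomial coefficients of a spanning set of $\Pi$ in closed form and on matching the sign cancellation of the alternating sum precisely to the parity factor $2\chi_{\textrm{odd}}(j)$. The cleanest route is through the enriched $P$-partition description of peak functions, in which the sign toggling caused by moving the cut position across the block is built into the combinatorial model; this is, in essence, the content of \cite[Proposition 1.3]{BHV}.
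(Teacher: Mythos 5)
Your proposal cannot be checked against an internal argument, because the paper offers none: Theorem \ref{BBrel} is quoted as known, with the citation \cite[Proposition 1.3]{BHV}, and the surrounding text says it can be taken as the definition of $\Pi$. Judged against the literature, your outline is essentially the standard proof, and most of it is sound. In particular the backward direction works as you describe: given a non-sparse word, taking the leading $1$ (or the first factor $11$) together with the maximal run of zeros following it always produces a legitimate instance of (ii) --- the left flanking word ends in $1$ or is empty, the right one begins with $1$ or is empty --- whose first term is the given coefficient and whose remaining terms have either fewer ones, or the same number of ones with the distinguished one pushed to the right; ordering words lexicographically by (number of ones, then decreasing sum of positions of the ones) makes your ``suitable term order'' precise, and the elimination terminates at the sparse words, of which there are indeed $f_n$ in degree $n$. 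Note, however, that concluding $V=\Pi_n$ from $\Pi_n\subseteq V$ and $\dim V\le f_n$ also needs the lower bound $\dim\Pi_n\ge f_n$, i.e.\ the linear independence of Stembridge's peak functions \cite{Ste}; you use this silently when you write ``which is also $\dim\Pi_n$''.

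The one genuine gap is in the forward direction, and it sits exactly where the hypotheses $E\in\overline{\two^{*}1}$ and $F\in\overline{1\two^{*}}$ do their work. Writing a peak function as $K_\Lambda=\sum_S 2^{|S|+1}M_S$, summed over the subsets $S$ compatible with $\Lambda$ (meaning $\Lambda\subseteq S\cup(S+1)$ in the standard indexing), your claim that the alternating sum ``collapses to $0$ when $j$ is even and to twice the uncut coefficient when $j$ is odd'' is correct only when all $j$ compatibility indicators equal $1$, i.e.\ when $c_{E0^jF}\neq 0$. When $c_{E0^jF}=0$ one must show the alternating sum vanishes, and the mechanism is different: either no cut position is compatible with $\Lambda$ (this uses that a peak set has no two consecutive elements), or the compatible cut positions form a pair of consecutive indices with opposite signs which cancel --- except that this pair can be truncated to a single unpaired term at $i=1$ or $i=j$, and excluding those boundary terms requires precisely that the zero block is flanked by $1$'s or by the ends of the word, together with the fact that a peak set avoids the extreme positions. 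Your sketch never invokes these hypotheses, and without them the identity is false: already in degree $4$ the peak function with peak set $\{2\}$ has coefficient $8$ on $M_{(1,2,1)}$ and coefficient $0$ on $M_{(3,1)}$, so in the paper's indexing $c_{101}\neq 2\,c_{100}$; this is the instance $E=10$, $j=1$, $F=\varepsilon$, which is excluded exactly because $E\notin\overline{\two^{*}1}$. So the forward direction as written is incomplete, though it is repairable by carrying out this case analysis.
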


The relations in part ii) of the above result are known as the {\em Bayer-Billera}
(or {\em generalized Dehn-Sommerville}) relations (see, e.g., \cite{BayBil}).

 Let $\mathscr V_n$ be the $\mathbb Q$-vector space of functions on ${\two}^n$ taking values in $\mathbb Q$. In particular, $\dim_{\mathbb Q}(\mathscr V_n)=2^n$. If $\alpha \in \mathscr V_n$ and $E\in {\two}^n$ we let $\alpha_E \eqdef \alpha(E)$ be the value that $\alpha$ takes on $E$.

Let $P$ be an Eulerian partially ordered set of rank $n+1$ with minimum $\hat 0$ and maximum $\hat 1$; 
we always assume that a chain ${\mathcal C} = (x_1,\ldots,x_k)$ in $P$ does not
contain $\hat 0$ and $\hat 1$.
Given such a chain we define $E(\mathcal C)\in {\two}^n$ by
\[
   E(\mathcal C)_i=1 \Leftrightarrow \exists j\in[k]:\,\rho(x_j)=i,
\]
where $\rho$ is the rank function of $P$.
The \emph{flag f-vector} of $P$ is the element $f(P)\in \mathscr V_n$ given by  
\[
f(P)_E \eqdef|\{\textrm{chains $\mathcal C$ in $P$}:\,E(\mathcal C)=E\}|   
\]
for all $E\in {\two}^n$.

Let $\mathscr A_n$ be the subspace of $\mathscr V_n$ generated by the flag f-vectors  $f(P)$ of all Eulerian posets of rank $n+1$. The following result is then well known
(see \cite{BayBil}).
\begin{thm}[Bayer-Billera] 
\label{bbrel}
\label{BayBil}
The vector space $\mathscr A_n$  has dimension $f_{n+1}$ and it is determined by the following linear relations: given $\alpha\in \mathscr V_n$ we have $\alpha\in \mathscr A_n$ if and only if for all $E\in \overline{{\two}^*1}$, $F\in \overline{1{\two}^*}$ and $j\geq 1$ such that $E0^jF\in {\two}^n$, we have
\[
\sum_{i=1}^{j}(-1)^{i-1}\alpha_{E\,E_{i,j}\, F}=2\chi_{\textrm{odd}}(j)\alpha_{E\,0^j\, F}.
\]
\end{thm}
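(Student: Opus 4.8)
The plan is to establish the two assertions at once by proving three facts, writing $\mathscr S_n\subseteq\mathscr V_n$ for the space of solutions of the displayed relations: (a) every flag $f$-vector lies in $\mathscr S_n$, so $\mathscr A_n\subseteq\mathscr S_n$; (b) $\dim\mathscr S_n\le f_{n+1}$; and (c) $\dim\mathscr A_n\ge f_{n+1}$. Since $\mathscr A_n\subseteq\mathscr S_n$, facts (b) and (c) force $\mathscr A_n=\mathscr S_n$ of dimension $f_{n+1}$. The bookkeeping behind (b) is that the sparse words of length $n$, i.e. the elements of $\two^*_s$ of length $n$, number exactly $f_{n+1}$: such a word ends either in $0$ or in $01$, which gives the recursion $s_n=s_{n-1}+s_{n-2}$ with $s_0=s_1=1$.

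For (a) the only tool needed is the Euler relation: if $P$ is Eulerian and $x<y$ in $P$ (possibly $x=\hat0$ or $y=\hat1$), then $\sum_{x\le z\le y}(-1)^{\rho(z)}=0$, which follows from $\mu(x,z)=(-1)^{\rho(z)-\rho(x)}$ and $\sum_{x\le z\le y}\mu(x,z)=0$. Fix $E\in\overline{\two^*1}$, $F\in\overline{1\two^*}$ and $j\ge1$ with $E0^jF\in\two^n$, and put $a\eqdef\ell(E)$. The hypotheses on $E$ and $F$ guarantee that every chain of type $E0^jF$ has a chain element $x$ at rank $a$ (read $x=\hat0$ if $E=\varepsilon$) and a chain element $y$ at rank $a+j+1$ (read $y=\hat1$ if $F=\varepsilon$), and that these are the elements immediately flanking the gap, so that $\rho(y)-\rho(x)=j+1$. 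A chain of type $E\,E_{i,j}\,F$ is the same datum together with a choice of $z$ with $x<z<y$ at rank $a+i$; grouping by the flanking pair therefore gives
\[
\sum_{i=1}^j(-1)^{i-1}\alpha_{E\,E_{i,j}\,F}=\sum_{\mathcal C}\ \sum_{x<z<y}(-1)^{\rho(z)-a-1},
\]
where $\mathcal C$ runs over the chains of type $E0^jF$. The Euler relation on $[x,y]$ turns the inner sum into $1-(-1)^j=2\chi_{\textrm{odd}}(j)$ for each $\mathcal C$, and summing over the $\alpha_{E0^jF}$ choices of $\mathcal C$ yields the relation. Note that the two boundary conditions on $E$ and $F$ are used precisely to guarantee that $x$ and $y$ are the chain elements immediately flanking the gap.

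For (b) I would use the relations to solve for every non-sparse coordinate in terms of the sparse ones. Call a $1$ of a word \emph{bad} if it occupies the first position or is immediately preceded by a $1$, so that a word is sparse iff it has no bad $1$. Given a non-sparse $W\in\two^n$, let $p$ be the position of its leftmost bad $1$ and $q>p$ that of the next $1$ (with $q=n+1$ if none), and set $E\eqdef W_1\cdots W_{p-1}$, $F\eqdef W_q\cdots W_n$, $j\eqdef q-p$. Then $E\in\overline{\two^*1}$ (it is empty, or $p>1$ forces $W_{p-1}=1$), $F\in\overline{1\two^*}$, and $W=E\,E_{1,j}\,F$. Isolating the $i=1$ term of the corresponding relation gives
\[
\alpha_W=2\chi_{\textrm{odd}}(j)\,\alpha_{E0^jF}-\sum_{i=2}^{j}(-1)^{i-1}\alpha_{E\,E_{i,j}\,F},
\]
and every word on the right either is sparse or has its leftmost bad $1$ strictly to the right of position $p$. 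Inducting downward on the position of the leftmost bad $1$, each non-sparse $\alpha_W$ becomes a linear combination of sparse coordinates; hence restriction to the sparse coordinates embeds $\mathscr S_n$ into $\mathbb Q^{\{F\in\two^*_s\,:\,\ell(F)=n\}}$, giving $\dim\mathscr S_n\le f_{n+1}$.

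Finally, (c) asks for $f_{n+1}$ Eulerian posets of rank $n+1$ with linearly independent flag $f$-vectors. Since products of Eulerian posets are again Eulerian and rank-additive, one has a large stock of such posets — for instance iterated pyramids $\mathrm{Pyr}(P)\cong P\times\{0<1\}$, prisms, and their products — whose flag $f$-vectors can be computed explicitly. I would select from this stock a family indexed by the sparse words of length $n$ and show that the matrix of their sparse flag numbers is unitriangular with respect to the containment order on sparse words, hence invertible; this gives $\dim\mathscr A_n\ge f_{n+1}$ and, with (a) and (b), completes the proof. I expect (c) to be the main obstacle: whereas (a) is a signed-counting identity and (b) a clean triangular reduction, (c) requires exhibiting a sufficiently rich supply of Eulerian posets and verifying that their sparse flag numbers are genuinely independent invariants, which is the technical heart of the Bayer--Billera argument.
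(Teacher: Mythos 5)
The paper itself does not prove this statement: it is quoted as a known result and attributed directly to Bayer and Billera \cite{BayBil}, so there is no internal proof to compare yours against, and I assess your argument on its own terms. Your parts (a) and (b) are correct and complete. In (a), grouping the chains of type $E\,E_{i,j}\,F$ over their underlying chain of type $E\,0^j\,F$ and applying the Euler relation $\sum_{x\le z\le y}(-1)^{\rho(z)}=0$ to the length-$(j+1)$ interval $[x,y]$ gives exactly $1-(-1)^j=2\chi_{\textrm{odd}}(j)$ per chain; your observation that the hypotheses $E\in\overline{\two^*1}$, $F\in\overline{1\two^*}$ are what force $x$ and $y$ to be chain elements at ranks $\ell(E)$ and $\ell(E)+j+1$ is the right one, and linearity extends the identity from flag $f$-vectors to all of $\mathscr A_n$. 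In (b), the words $E\,0^j\,F$ and $E\,E_{i,j}\,F$ with $i\ge 2$ produced by your relation indeed are sparse or have their leftmost bad $1$ strictly to the right of position $p$, so the downward induction expresses every non-sparse coordinate in terms of sparse ones, and the count of sparse words of length $n$ is $f_{n+1}$ as you say.

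The genuine gap is (c). You need $f_{n+1}$ Eulerian posets of rank $n+1$ with linearly independent flag $f$-vectors, and what you give is a plan (``I would select from this stock a family indexed by the sparse words \dots and show that the matrix \dots is unitriangular'') with no actual choice of family, no computation of flag numbers, and no proof of triangularity; the choice of ordering on sparse words and the verification that the chosen posets' flag numbers are triangular with nonzero diagonal is precisely the nontrivial content. This is not a formality: it is the hard half of Bayer and Billera's theorem, and in \cite{BayBil} it occupies the bulk of the proof, carried out by assigning to each sparse word an iterated pyramid/bipyramid of polytopes and proving the independence of their flag vectors. Without (c), your (a) and (b) yield only $\mathscr A_n\subseteq\mathscr S_n$ and $\dim\mathscr S_n\le f_{n+1}$, i.e.\ the relations are necessary and the dimension is at most $f_{n+1}$; you cannot conclude that $\dim\mathscr A_n=f_{n+1}$, nor that every solution of the relations actually lies in $\mathscr A_n$, which is the ``determined by'' half of the statement. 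So as written the proposal proves strictly less than the theorem.
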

Note that these relations are exactly the relations that appear in the characterization of the peak algebra in Theorem \ref{BBrel}.

If $P$ is a graded poset of rank $n+1$, the function $h(P)\in \mathscr V_n$ which is uniquely determined by 
\[
   f(P)_F=\sum_{E\leq F} h(P)_E,
\]
is called the \emph{flag h-vector} of $P$. The definition is clearly equivalent to
\[
   h(P)_F=\sum_{E\leq F}(-1)^{|F-E|}f(P)_E
\]
by the principle of inclusion-exclusion. 

We follow \cite{BjBr} for general Coxeter groups notation
and terminology.
In particular, given
a Coxeter system $(W,S)$ and $u \in W$ we denote by $\ell
(u )$ the length of $u $ in $W$, with respect to $S$,
 by $e$
the identity of $W$, and we let $T \overset{\rm def}{=}
\{ u s u ^{-1} : u \in W, \; s \in S \}$ be the
set of \emph{reflections} of $W$.
We always assume that $W$ is partially ordered by {\em Bruhat
order}. Recall (see, e.g., \cite[\S 2.1]{BjBr}) that this means that
$x \leq y$ if and only if there exist $r \in \N$ and $t_{1},
\ldots , t_{r} \in T$ such that $ t_{r} \cdots  t_{1} \, x=y$ and
$\ell (t_{i} \cdots t_{1} \, x) > \ell(t_{i-1} \cdots t_{1}x)$ for $i=1,
\ldots ,r$.
Given $u,v \in W$ we let $[u,v] \overset{\rm def}{=} \{ x \in W
: u \leq x \leq v \}$.  We consider $[u,v]$ as a poset with the 
partial ordering induced by $W$.
It is well known (see, {\it e.g.}, 
\cite[Corollary 2.7.11]{BjBr})
that intervals of $W$ are Eulerian
posets.
Recall (see, e.g., \cite[\S 2.1]{BjBr})
that the {\em Bruhat graph} of a Coxeter system $(W,S)$
is the directed graph $B(W,S)$ obtained by taking $W$ as vertex
set and putting a directed edge from $x$ to $tx$ for all $x \in W$ and
$t \in T$ such that $\ell(x) < \ell(tx)$. We denote by $\Phi ^{+}$
 the set of positive roots of $(W,S)$ (see, e.g., \cite[\S 4.4]{BjBr}). Recall
(see, e.g., \cite[\S 5.2]{BjBr}) that a total ordering $\prec$ on $\Phi^+$ is a \emph{reflection ordering} if 
whenever $\alpha,\beta, c_1\alpha+c_2\beta\in \Phi^+$ for some $c_1,c_2\in \mathbb R_{>0}$ 
and $\alpha \prec \beta$ then $\alpha \prec c_1\alpha+c_2\beta \prec \beta$. The existence of reflection orderings 
(and many of their properties) is proved in \cite[\S 2]{Dy} (see also \cite[\S 5.2]{BjBr}).
 By means of the canonical bijection between $\Phi^+$ and $T$ 
(see, e.g., \cite[\S 4.4]{BjBr}) we transfer the reflection ordering also on $T$. Given $u,v \in W$ we denote 
by $P_{u,v}(q)$ the Kazhdan-Lusztig polynomial 
of $u,v$ in $W$ (see, e.g., \cite[Chap. 5]{BjBr} and \cite[Chap. 7]{Hum}).
 
Let $\prec$ be a reflection ordering of $T$. 
Given a path $\Delta = (a_{0}, a_{1} , \ldots ,
a_{r})$  in $B(W,S)$ from $a_{0}$ to $a_{r}$, we define
 its {\em length} to be $l(\Delta) \overset{\rm def}{=} r$, and its
 {\em descent string} with respect to $\prec$ to be the sequence $E_\prec (\Delta )\in \two^{r-1}$ given by
\[
E_ \prec (\Delta)_{r-i}=1 \Leftrightarrow 
a_{i}(a_{i-1})^{-1} \succ a_{i+1} (a_{i})^{-1}. 
\]

Given $u,v \in W$, and $k \in {\mathbb N}$,
 we denote by $B_{k}(u,v)$ the set of all the directed
paths in $B(W,S)$ from $u$ to $v$ of length $k$, and we let $B(u,v)
\overset{\rm def}{=} \bigcup _{k \geq 0} B_{k}(u,v)$.
For $u,v \in W$, and $E \in \two^{n-1}$,
we let, following \cite{BreLMS},
\begin{equation}
\label{}
c(u,v)_E \overset{\rm def}{=}
| \{ \Delta  \in B_{n}(u,v) : \;
E_ \prec (\Delta ) \leq E \} | , 
\end{equation}
and
\begin{equation}
\label{balpha}
 b(u,v)_E \overset{\rm def}{=}
| \{ \Delta  \in B_{n}(u,v): \; E_ \prec (\Delta )
= E \} | .
\end{equation}
 Note that these definitions  imply that
\begin{equation}
\label{2.IE.CB}
 c(u,v)_E = \sum _{\{ F \in \twoindex^{n-1} : \, F \leq E\} }
b(u,v)_F 
\end{equation}
for all $u,v \in W$ and $E \in \two^{n-1}$.
It follows immediately from Proposition 4.4 of \cite{BreLMS} that $c(u,v)_E$ (and hence $b(u,v)_E$) are
independent of the reflection ordering $\prec $ used to define them.

Let $[u,v]$ be a Bruhat interval  of rank $r+1$ in a Coxeter group, and 
$\Delta = (x_0,x_1,\ldots,x_{n+1})$ a path in the Bruhat graph from $u$ to $v$.
So $x_0=u$, $x_{n+1}=v$, and for all $i\in[n+1]$ we have $x_{i-1}<x_i$ and the element $t_i$ given by $x_{i}=x_{i-1}t_i$ is a reflection. We then sometimes denote such a path by
\[
   \Delta=(x_0 \stackrel{t_1}{\longrightarrow}x_1 \stackrel{t_2}{\longrightarrow}\cdots \stackrel{t_{n+1}}{\longrightarrow}x_{n+1}).
\]

If $\Delta\in B_{n+1}(u,v)$, $\prec$ is a reflection ordering and $E=E_{\prec}(\Delta)$ we let $m_\prec(\Delta)\eqdef \mu_{E_n}\cdots \mu_{E_1}\in \mathbb Z\langle a,b\rangle$, where $\mu_0=a$ and $\mu_1=b$. In other words, if $\Delta=(x_0 \stackrel{t_1}{\longrightarrow}x_1 \stackrel{t_2}{\longrightarrow}\cdots \stackrel{t_{n+1}}{\longrightarrow}x_{n+1})$, then $m_\prec(\Delta)$ is the product of $n$ factors, the $i$-th factor being $a$ if $t_i\prec t_{i+1}$ and $b$ otherwise. We will usually drop the subscript $\prec$ from the notation $m_\prec(\Delta)$ when it is clear from the context.

If $[u,v]$ is a Bruhat interval of rank $r+1$ the {\em $cd$-index} of $[u,v]$ is the polynomial
\[
   \Psi_{[u,v]}\eqdef \sum_{E\in \twoindex ^r}h([u,v])_E\mu_E\in \mathbb Z\langle a,b\rangle,
\]
where $h([u,v])$ is the flag $h$-vector of $[u,v]$ and $\mu_E\eqdef \mu_{E_1}\cdots \mu_{E_r}$. It is known that $\Psi_{[u,v]}$ 
is a polynomial in $c=a+b$ and $d=ab+ba$, as $[u,v]$ is a Eulerian poset. It is also known that if $[u,v]$ has rank $r+1$ 
then there exists a unique path $\Delta\in B_{r+1}(u,v)$ such that $E_\prec(\Delta)=0^r\in \two^r$. This implies 
(see \cite[Theorem 3.13.2]{ECI}) that for all $E\in \two^r$ we have that $b([u,v])_{E^{op}}=h([u,v])_E$. 
Therefore the $cd$-index of a Bruhat interval $[u,v]$ of length $r+1$ can be expressed as
\[
   \Psi_{[u,v]}=\sum_{\Delta\in B_{r+1}(u,v)}m_\prec(\Delta),
\]
where $\prec$ is any reflection ordering.
We consider the natural extension of this polynomial to all paths in the Bruhat graph
\[
   \tilde \Psi_{[u,v]}(a,b)\eqdef \sum_{\Delta\in B(u,v)}m_\prec(\Delta).
\]
The polynomial $\tilde \Psi_{[u,v]}$ has been introduced by Billera and the first author in \cite{BB} and can also 
be expressed as a polynomial in the variables $c=a+b$ and $d=ab+ba$ and therefore it is called the {\em complete $cd$-index} 
of the interval $[u,v]$. We will use the simpler notation   $\tilde \Psi_{u,v}$ instead of $\tilde \Psi_{[u,v]}$ to 
denote the complete $cd$-index of the Bruhat interval $[u,v]$.

Let $\mathcal A=\mathbb Z\langle a,b\rangle$. Following \cite{ER}, we define  a coproduct $\delta:\mathcal A\rightarrow \mathcal A\otimes \mathcal A$ on $\mathcal A$  as the unique linear map such that for all $n\in \mathbb N$ and all $v_1,\ldots, v_n\in \{a,b\}$,
\[
 \delta(v_1\cdots v_n)=\sum_{i=1}^n v_1\cdots v_{i-1}\otimes v_{i+1}\cdots v_n.
\]
One can observe that the algebra $\mathcal A$ endowed with the coproduct $\delta$ has also a Newtonian coalgebra structure, though this is not needed in the sequel.

Now let $\mathcal P$ be the $k$-vector space consisting of formal finite linear combinations of Bruhat intervals. 
We define also on $\mathcal P$ a coproduct $\delta:\mathcal P \rightarrow \mathcal P\otimes \mathcal P$ in the following way.
 We let
\[
 \delta([u,v])=\sum_{x\in (u,v)}[u,x]\otimes[x,v].
\]
The following result is proved in \cite[Proposition 2.11]{BB}
\begin{prop}\label{coalgmap}
 The complete $cd$-index $\tilde \Psi:\mathcal P\rightarrow \mathcal A$ is a coalgebra map, i.e.
\[
 \sum_{x\in (u,v)}\tilde \Psi_{u,x}\otimes \tilde \Psi_{x,v}=
\delta(\tilde \Psi_{u,v}).
\]
\end{prop}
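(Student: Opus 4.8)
The plan is to prove that the complete $cd$-index $\tilde\Psi$ intertwines the two coproducts by a direct, path-splitting argument. The key observation is that both the combinatorial structure ($\mathcal P$ with $\delta([u,v])=\sum_{x\in(u,v)}[u,x]\otimes[x,v]$) and the algebraic structure ($\mathcal A$ with $\delta(v_1\cdots v_n)=\sum_i v_1\cdots v_{i-1}\otimes v_{i+1}\cdots v_n$) are governed by the act of ``cutting at an interior point.'' On the geometric side one cuts a chain/path at an intermediate element $x$; on the algebraic side one cuts the monomial $m_\prec(\Delta)$ at an intermediate letter. So I would fix a reflection ordering $\prec$ (recall that $\tilde\Psi$ is independent of this choice) and compute both sides of the claimed identity by expanding $\tilde\Psi_{u,v}$ as the sum $\sum_{\Delta\in B(u,v)}m_\prec(\Delta)$ over all directed paths.

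First I would expand the right-hand side. Apply $\delta$ term by term to $\tilde\Psi_{u,v}=\sum_{\Delta\in B(u,v)}m_\prec(\Delta)$. A path $\Delta=(x_0\stackrel{t_1}{\to}x_1\stackrel{t_2}{\to}\cdots\stackrel{t_{n+1}}{\to}x_{n+1})$ of length $n+1$ produces the monomial $m_\prec(\Delta)=\mu_{E_n}\cdots\mu_{E_1}$ of degree $n$, whose $i$-th letter records whether $t_i\prec t_{i+1}$. Applying the coalgebra map $\delta$ to this monomial deletes one interior letter, the $i$-th say, and splits the word into the first $i-1$ letters tensored with the last $n-i$ letters. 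So the hard combinatorial bookkeeping is to match a \emph{deleted letter} in the monomial with a \emph{cut vertex} in the path: deleting the letter that compares $t_i$ with $t_{i+1}$ corresponds geometrically to cutting $\Delta$ at the vertex $x_i$, which is an interior vertex since $0<i<n+1$, hence $x_i\in(u,v)$. This cut yields the pair of paths $\Delta'=(x_0\to\cdots\to x_i)\in B(u,x_i)$ and $\Delta''=(x_i\to\cdots\to x_{n+1})\in B(x_i,v)$, and the two sub-monomials $m_\prec(\Delta')$ and $m_\prec(\Delta'')$ are exactly the prefix and suffix produced by $\delta$.

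Next I would expand the left-hand side and verify the match is a bijection. For fixed $x\in(u,v)$, the tensor $\tilde\Psi_{u,x}\otimes\tilde\Psi_{x,v}$ is $\sum_{\Delta'\in B(u,x)}\sum_{\Delta''\in B(x,v)}m_\prec(\Delta')\otimes m_\prec(\Delta'')$, i.e.\ a sum over all pairs of paths through $x$. The concatenation of such a pair is precisely a path $\Delta\in B(u,v)$ passing through $x$ at the interior vertex where the cut occurred, and conversely every path $\Delta\in B(u,v)$ together with a choice of interior vertex $x_i$ arises in exactly one way. Thus summing over all $x\in(u,v)$ on the left matches summing over all $\Delta\in B(u,v)$ and all interior deleted letters on the right. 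The only point requiring care—and the main obstacle—is checking that the monomial factors align correctly: the comparison $t_i\prec t_{i+1}$ lives ``across'' the cut vertex $x_i$ and belongs to neither sub-path, so deleting exactly that comparison letter (rather than a letter internal to a sub-monomial) is what makes $m_\prec(\Delta')\otimes m_\prec(\Delta'')$ equal to the $i$-th term of $\delta(m_\prec(\Delta))$. Once this one-to-one correspondence between (path $\Delta\in B(u,v)$, interior vertex $x_i$) and (pair of paths $\Delta'\in B(u,x_i)$, $\Delta''\in B(x_i,v)$) is established, together with the verified identity $m_\prec(\Delta')\otimes m_\prec(\Delta'')=v_1\cdots v_{i-1}\otimes v_{i+1}\cdots v_n$ where $m_\prec(\Delta)=v_1\cdots v_n$, both sides reduce to the same sum and the identity follows.
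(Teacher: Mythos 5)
Your proof is correct. One thing to be aware of: this paper does not actually prove Proposition \ref{coalgmap} --- it quotes it from Billera--Brenti \cite[Proposition 2.11]{BB}, so there is no internal proof to compare against; your argument supplies the missing direct proof, and it is the natural one (essentially the argument of the cited reference). The bookkeeping in your write-up is right: with one fixed reflection ordering $\prec$, the $i$-th letter of $m_\prec(\Delta)$ records the comparison $t_i$ versus $t_{i+1}$, which ``lives across'' the vertex $x_i$, so deleting that letter is exactly cutting $\Delta$ at $x_i$, and cutting/concatenating are mutually inverse operations between pairs $(\Delta, x_i)$ and triples $(x,\Delta',\Delta'')$. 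Two points that your argument uses and that deserve to be said explicitly in a final write-up: (i) every interior vertex of a path in $B(u,v)$ lies in the open Bruhat interval $(u,v)$, because each edge of the Bruhat graph goes strictly up in Bruhat order, so the cut pieces really land in $B(u,x_i)$ and $B(x_i,v)$ with $x_i\in(u,v)$; and (ii) the expansions $\tilde\Psi_{u,x}=\sum_{\Delta'\in B(u,x)}m_\prec(\Delta')$ and $\tilde\Psi_{x,v}=\sum_{\Delta''\in B(x,v)}m_\prec(\Delta'')$ may be taken with respect to the same global ordering $\prec$, since that expansion is valid for an arbitrary reflection ordering; this is what makes the prefix and suffix monomials literally equal to $m_\prec(\Delta')$ and $m_\prec(\Delta'')$ rather than merely equal up to a change of ordering. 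The degenerate case of length-one paths (empty monomial, no interior vertex) contributes zero to both sides, so it is consistent with your bijection.
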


For all $x\in \mathcal A$ write $\delta(x)=\sum_i x_i(1)\otimes x_i(2)$ where $x_i(1),x_i(2)\in \mathcal A$. Then for any $y\in \mathcal A$ we can consider the following map
\[
 D_y(x)=\sum_i x_i(1)\cdot y \cdot x_i(2).
\]
One can easily verify that this is a well-defined linear map, and that it is a derivation, i.e. it satisfies the Leibniz rule on products, for all $y\in \mathcal A$. The following is then an immediate consequence of Proposition \ref{coalgmap}.

\begin{cor}\label{der}
Let $[u,v]$ be any Bruhat interval. Then
\[
 D_y(\tilde \Psi_{u,v})=\sum_{x\in (u,v)}\tilde \Psi_{u,x}\cdot y \cdot  \tilde \Psi_{x,v}.
\]
\end{cor}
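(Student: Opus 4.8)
The plan is to derive Corollary \ref{der} directly from Proposition \ref{coalgmap} by applying the definition of $D_y$ to the element $\tilde\Psi_{u,v}\in\mathcal A$. By Proposition \ref{coalgmap} we know that the coproduct $\delta$ applied to $\tilde\Psi_{u,v}$ equals $\sum_{x\in(u,v)}\tilde\Psi_{u,x}\otimes\tilde\Psi_{x,v}$. The whole point is that $D_y$ is defined in terms of $\delta$: writing $\delta(x)=\sum_i x_i(1)\otimes x_i(2)$, the map is $D_y(x)=\sum_i x_i(1)\cdot y\cdot x_i(2)$. So the corollary is essentially a matter of reading off the definition of $D_y$ against the explicit form of $\delta(\tilde\Psi_{u,v})$ supplied by the proposition.

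Concretely, first I would set $x=\tilde\Psi_{u,v}$ and invoke Proposition \ref{coalgmap} to identify the summands of $\delta(x)$ with the tensor factors $\tilde\Psi_{u,x'}\otimes\tilde\Psi_{x',v}$ indexed by $x'\in(u,v)$. Then I would substitute these into the formula $D_y(x)=\sum_i x_i(1)\cdot y\cdot x_i(2)$, which replaces each tensor symbol $\otimes$ by the insertion of $y$ between the two factors, i.e. each term $\tilde\Psi_{u,x'}\otimes\tilde\Psi_{x',v}$ becomes $\tilde\Psi_{u,x'}\cdot y\cdot\tilde\Psi_{x',v}$. Summing over $x'\in(u,v)$ yields exactly the right-hand side of the claimed identity. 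The only subtlety worth noting is that the map $x\mapsto D_y(x)$ is well defined independently of how one writes $\delta(x)$ as a sum of simple tensors — this is already asserted in the paragraph preceding the corollary (``One can easily verify that this is a well-defined linear map''), so one may freely use the particular representation of $\delta(\tilde\Psi_{u,v})$ coming from Proposition \ref{coalgmap}.

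There is no genuine obstacle here: the statement is an immediate formal consequence, as the excerpt itself signals. If one wished to be fully rigorous about the well-definedness used above, the point is that $D_y$ factors as the composition of $\delta$ with the linear map $\mathcal A\otimes\mathcal A\to\mathcal A$ sending $p\otimes q\mapsto p\cdot y\cdot q$; since both $\delta$ and this multiplication-with-insertion map are linear, the composite is linear and its value on $\tilde\Psi_{u,v}$ can be computed from any expression for $\delta(\tilde\Psi_{u,v})$. Applying this composite description to the expression furnished by Proposition \ref{coalgmap} gives the result in one line, so the ``proof'' amounts to substituting the proposition into the definition of the derivation $D_y$.
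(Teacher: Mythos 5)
Your proposal is correct and matches the paper's own treatment: the paper simply declares the corollary ``an immediate consequence of Proposition \ref{coalgmap},'' and your argument—composing $\delta$ with the linear map $p\otimes q\mapsto p\cdot y\cdot q$ and substituting the expression for $\delta(\tilde\Psi_{u,v})$ given by that proposition—is precisely the intended one-line deduction, with the well-definedness point handled appropriately.
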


Given $u,v \in W$, $u \leq v$, we let, following \cite{BB},
\[
\widetilde{F}(u,v) \eqdef \sum_{E \in \, \twoindex ^{\ast}} b(u,v)_E L_E,
\]
if $u<v$ and $\widetilde{F}(u,u) \eqdef 1$.
This definition is different from the one given in \cite{BB} but
equivalent to it by
Theorem 2.2 of \cite{BB}. The following is proved in 
\cite[Theorem 2.2]{BB} (see also \cite[Theorem 8.4]{BreJAMS}).
\begin{thm}
\label{FinPi}
Let $(W,S)$ be a Coxeter system and $u,v \in W$, $u<v$. Then 
$\widetilde{F}(u,v) \in \Pi$.
\end{thm}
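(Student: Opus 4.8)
The plan is to show that the generating function $\widetilde{F}(u,v) = \sum_E b(u,v)_E L_E$ lies in the peak subalgebra $\Pi$ by verifying the Bayer--Billera relations of Theorem~\ref{BBrel}. Since $\Pi$ is characterized by those relations on the coefficients $c_E$ in the monomial expansion, my first step is to rewrite $\widetilde{F}(u,v)$ in the monomial basis. Using $L_F = \sum_{E \geq F} M_E$ together with the defining relation \eqref{2.IE.CB}, namely $c(u,v)_E = \sum_{F \leq E} b(u,v)_F$, I expect the monomial coefficient of $\widetilde{F}(u,v)$ to be exactly $c_E = c(u,v)_E$. Thus the problem reduces to checking that the numbers $c(u,v)_E$ satisfy, for all $E \in \overline{\two^*1}$, $F \in \overline{1\two^*}$, and $j \geq 1$,
\[
\sum_{i=1}^{j} (-1)^{i-1} c(u,v)_{E\, E_{i,j}\, F} = 2\chi_{\mathrm{odd}}(j)\, c(u,v)_{E\, 0^j\, F}.
\]

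The natural strategy is to observe that these are precisely the Bayer--Billera relations of Theorem~\ref{bbrel}, so it suffices to exhibit $c(u,v)$, or an equivalent quantity, as (coming from) the flag vector of an Eulerian poset, or at least as an element of $\mathscr A_n$. Here I would exploit the connection, developed in the preliminaries, between the descent-string statistics $b(u,v)_E$ and the flag $h$-vector and $cd$-index of the Bruhat interval $[u,v]$: the excerpt records that Bruhat intervals are Eulerian and that $b([u,v])_{E^{op}} = h([u,v])_E$ on the top graded piece. For paths of length strictly less than the rank, however, $b(u,v)_E$ is not literally a flag $h$-vector of $[u,v]$ itself, so the heart of the argument is to produce an \emph{Eulerian} combinatorial object whose flag $f$-vector (or $h$-vector) reproduces the $c(u,v)_E$ in the correct degree. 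I anticipate this is where the complete $cd$-index $\tilde\Psi_{u,v}$ and the coalgebra structure of Proposition~\ref{coalgmap} and Corollary~\ref{der} enter: decomposing $\tilde\Psi_{u,v}$ via the coproduct over intermediate elements $x \in (u,v)$ lets one reduce a length-$k$ count to products of genuine $cd$-indices of subintervals, each Eulerian.

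Concretely, I would proceed as follows. First, translate the claim $\widetilde{F}(u,v) \in \Pi$ into the statement that the vector $(c(u,v)_E)_E \in \mathscr V_n$ lies in $\mathscr A_n$ for each relevant length $n$, using Theorem~\ref{bbrel} and the coincidence of the two sets of relations noted in the text. Second, identify $c(u,v)$ in terms of $\tilde\Psi_{u,v}$ written in the variables $a,b$: since $m_\prec(\Delta)$ records the descent string and $\tilde\Psi_{u,v} = \sum_{\Delta \in B(u,v)} m_\prec(\Delta)$, the coefficient extraction matches $b(u,v)_E$ with monomials $\mu_E$, and summing over $F \leq E$ recovers $c(u,v)_E$. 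Third, invoke the fact that $\tilde\Psi_{u,v}$ is a polynomial in $c = a+b$ and $d = ab+ba$ (the complete $cd$-index), which is exactly the algebraic shadow of the Bayer--Billera relations; I would show that membership of a coefficient vector in $\mathscr A_n$ is equivalent to its noncommutative generating polynomial being a $cd$-polynomial, and then conclude.

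The main obstacle I expect is the third step: cleanly relating ``the generating polynomial is a $cd$-polynomial'' to ``the coefficient vector satisfies the Bayer--Billera relations'' at the level of the \emph{complete} $cd$-index, i.e.\ for all lengths simultaneously rather than just for the top-dimensional $cd$-index of a single Eulerian interval. The subtlety is that $\widetilde{F}(u,v)$ aggregates paths of every length $k \geq 1$ between $u$ and $v$, so one is not dealing with a single Eulerian poset but with a whole family, and the argument must show that the relations hold degree-by-degree. I would handle this either by a direct inductive/coalgebraic argument using Corollary~\ref{der} to peel off the structure, or by appealing to the established result that $\tilde\Psi_{u,v}$ is a $cd$-polynomial (from \cite{BB}) and then proving the purely formal lemma that the $cd$-polynomials are precisely the generating functions of coefficient vectors satisfying the peak relations.
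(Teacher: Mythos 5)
First, a point of comparison: the paper itself gives no proof of Theorem \ref{FinPi} --- it is quoted from \cite[Theorem 2.2]{BB} (see also \cite[Theorem 8.4]{BreJAMS}) --- so your argument has to stand on its own. Your reductions are correct as far as they go: expanding $L_F=\sum_{E\geq F}M_E$ and using \eqref{2.IE.CB} does give $\widetilde{F}(u,v)=\sum_E c(u,v)_E\,M_E$, so by Theorem \ref{BBrel} the claim is equivalent to the Bayer--Billera relations for $c(u,v)$ in each degree; and the ``purely formal lemma'' you defer to the end is exactly Theorem \ref{h-relations} combined with Corollary \ref{cd} of this paper (the passage from the vector $b(u,v)$ to the coefficients of $\tilde\Psi_{u,v}$ through $E\mapsto E^{op}$ is harmless, since word reversal fixes $c$ and $d$).

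The genuine gap is that your one substantive input makes the argument circular. By the very equivalences just listed, the statement ``$\tilde\Psi_{u,v}$ can be written as a polynomial in $c=a+b$ and $d=ab+ba$'' is not an auxiliary fact: it is \emph{equivalent} to $\widetilde{F}(u,v)\in\Pi$. In \cite{BB}, the source you propose to cite, the existence of the complete $cd$-index is \emph{deduced from} the peak property, i.e.\ from Theorem \ref{FinPi}, via the correspondence between peak functions and $cd$-polynomials; it has no independent proof there. The present paper records the same logical order when it remarks that Corollary \ref{cd} together with Theorem \ref{h-relations} yields a new proof of the existence of the $cd$-index: the implication is always run from the peak/Bayer--Billera side to the $cd$ side, never the reverse. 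So your main route proves the theorem from its own corollary. Your fallback --- an induction on $\ell(u,v)$ using Proposition \ref{coalgmap} and Corollary \ref{der} --- is a legitimate strategy in principle (it is how Ehrenborg--Readdy \cite{ER} prove existence of the $cd$-index for Eulerian posets), but it is only gestured at, and its actual content is missing: one needs at least the lemma that the kernel of $\delta$ on homogeneous $ab$-polynomials of degree $n$ is spanned by $(a-b)^n$, so that the inductive hypothesis only forces $\tilde\Psi_{u,v}$ to be a $cd$-polynomial up to multiples of the $(a-b)^n$, and then further relations among the $b(u,v)_E$ to eliminate those multiples in \emph{every} degree, not just the top one where Eulerianness of $[u,v]$ is available (for instance $b(u,v)_E=b(u,v)_{\overline{E}}$, which follows from the fact that the reverse of a reflection ordering is again a reflection ordering, handles the odd degrees but not the even ones). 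The non-circular proofs in the literature run in the opposite direction entirely: \cite[Theorem 8.4]{BreJAMS} verifies the Dehn--Sommerville-type relations for $c(u,v)$ directly from $R$-polynomial identities, and \cite{BB} obtains the peak property along those lines before deriving the complete $cd$-index from it.
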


Let $n\in \mathbb N$. A \emph{lattice path} of length $n$ is a function $\Gamma : [0,n] \rightarrow {\mathbb Z}$ such that
 $\Gamma (0) =0$ and
 $ | \Gamma (i) - \Gamma (i-1)  |=1 $ for all $ i \in [n]$ and we denote by ${\mathcal L}(n)$ the set
 of all the lattice paths of length $n$. Given  $\Gamma \in \mathcal L(n)$ we let $N(\Gamma)\in \two^{n-1}$ be given by
 \[ N(\Gamma )_i=1 \Longleftrightarrow \Gamma (i)<0,
\]
and $d_{+} (\Gamma ) \stackrel{\rm def}{=} | \{ i \in [n]: \Gamma
 (i) - \Gamma (i-1) =1 \} |$.
 Note that 
\begin{equation}
\label{dplus}
 d_{+}(\Gamma ) = \frac{\Gamma (n)+n}{2} . 
\end{equation}

For $E \in \two^{n-1}$ we define, following \cite[\S 5.4]{BjBr}, a polynomial $\Upsilon_E(q)\in \mathbb Z[q]$ by
\begin{equation}
\label{Upsdef}
\Upsilon_{E}(q) = (-1)^{m_0(E)}\sum_{\Gamma \, \in \, {\mathcal L}(E)}
(-q)^{d_{+}(\Gamma)},
\end{equation}
where ${\mathcal L}(E) \eqdef \{ \Gamma \in {\mathcal L}(n) : N(\Gamma)=E \}$. 
 Note that what we denote $\Upsilon _{E}$ is denoted by $\Upsilon _{oc (E)}$ in
 \cite[\S 5.4]{BjBr}. For example,
$\Upsilon_{001010}(q)=q^4-q^3$.

Following \cite{BB} we define a linear map 
${\mathcal K} : \qs \rightarrow \Z[q^{1/2},q^{-1/2}]$ by
${\mathcal K}(L_E) \eqdef q^{- \frac{\ell(E)+1}{2}} \Upsilon_E$, for all 
$E \in \two ^{\ast}$.
We then have the following result (see \cite[Proposition 3.1]{BB}, and
\cite[Theorem 5.5.7]{BjBr}).
\begin{thm}
\label{FtoKL}
Let $u,v \in W$, $u<v$.  Then
\begin{equation}
{\mathcal K}(\widetilde{F}(u,v)) =
q^{\frac{-\ell(u,v)}{2}} P_{u,v}(q) - 
q^{\frac{\ell(u,v)}{2}} P_{u,v} \left( \frac{1}{q} \right) . 
\end{equation}
\end{thm}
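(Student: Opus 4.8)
The plan is to unwind the definition of $\mathcal{K}$ so as to reduce the statement to the classical passage from $R$-polynomials to Kazhdan--Lusztig polynomials, the role of $\mathcal{K}$ being to realize this passage at the level of paths in the Bruhat graph. By linearity of $\mathcal{K}$, the definition of $\widetilde{F}(u,v)$, and \eqref{balpha}, grouping monomials by the path that produces them gives
\[
\mathcal{K}(\widetilde{F}(u,v)) = \sum_{\Delta \in B(u,v)} q^{-l(\Delta)/2}\,\Upsilon_{E_\prec(\Delta)}(q),
\]
using that a path of length $l(\Delta)$ has descent string in $\two^{\,l(\Delta)-1}$, so $\ell(E_\prec(\Delta))+1 = l(\Delta)$. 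Writing $n = l(\Delta)$, expanding $\Upsilon$ by \eqref{Upsdef}, and applying \eqref{dplus} in the form $\Gamma(n) = 2d_+(\Gamma)-n$ converts each summand into a signed generating function over lattice paths,
\[
q^{-n/2}\Upsilon_{E}(q) = (-1)^{m_0(E)}\sum_{\Gamma \in \mathcal{L}(E)} (-1)^{d_+(\Gamma)}\,q^{\Gamma(n)/2},
\]
so that $\mathcal{K}(\widetilde{F}(u,v))$ becomes a signed sum over pairs consisting of a Bruhat path $\Delta$ and a lattice path whose negative positions match the descent string of $\Delta$. I record at once that the target $q^{-\ell/2}P_{u,v}(q) - q^{\ell/2}P_{u,v}(1/q)$ (with $\ell = \ell(u,v)$) is antisymmetric under $q \mapsto q^{-1}$, and that the degree bound $\deg P_{u,v} \le \tfrac{1}{2}(\ell-1)$ makes its first summand $q^{-\ell/2}P_{u,v}(q)$ supported on strictly negative powers of $q^{1/2}$. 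Hence it suffices to prove that $\mathcal{K}(\widetilde{F}(u,v))$ is antisymmetric and that its strictly-negative part equals $q^{-\ell/2}P_{u,v}(q)$.

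For the antisymmetry I would use Theorem \ref{FinPi}, which places $\widetilde{F}(u,v)$ in the peak subalgebra $\Pi$, and prove that $\mathcal{K}(F)(q^{-1}) = -\mathcal{K}(F)(q)$ for every $F \in \Pi$. On the lattice-path side the involution $\Gamma \mapsto -\Gamma$ sends $d_+(\Gamma)$ to $n-d_+(\Gamma)$ and $\Gamma(n)$ to $-\Gamma(n)$; it fails to be compatible with the condition $N(\Gamma)=E$ exactly at the positions where a lattice path meets $0$, and the Bayer--Billera relations of Theorem \ref{BBrel} satisfied by the coefficients of $F$ are precisely what is needed to repair this discrepancy. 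This compatibility of $\mathcal{K}$ with the generalized Dehn--Sommerville relations is the structural reason $\Pi$ is the correct ambient space, and I expect it to follow from a direct manipulation of the relations in Theorem \ref{BBrel}.

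The decisive step is to identify the strictly-negative part. Since the data $E_\prec(\Delta)$ are independent of the reflection ordering, I may choose $\prec$ conveniently, and Dyer's expression of the $R$-polynomials through a reflection ordering (see \cite[\S 5.3]{BjBr}) shows that the descent statistics entering $\widetilde{F}$ already compute all the polynomials $R_{u,z}$ for $u \le z \le v$; the lattice-path sum above, whose unconstrained version collapses to $(-1)^n(q^{1/2}-q^{-1/2})^n$, is exactly the substitution carrying the polynomial variable of the reduced $R$-polynomials back to $q$. I would then argue by induction on $\ell(u,v)$: using the defining recursion $q^{\ell(u,v)}P_{u,v}(1/q) = \sum_{u\le z\le v} R_{u,z}(q)P_{z,v}(q)$ together with the degree bound and the uniqueness of the Kazhdan--Lusztig polynomials (see \cite[\S 5.1]{BjBr}), I would show that the strictly-negative part of $\mathcal{K}(\widetilde{F}(u,v))$ obeys the same recursion and degree constraint as $q^{-\ell/2}P_{u,v}(q)$, and therefore equals it; the recursion on the path side can be organized via the first descent of a path, or more structurally via the coproduct of Corollary \ref{der}. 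Combined with the antisymmetry, this yields the formula.

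The hard part will be this decisive step: showing that the signed lattice-path ($\Upsilon$) transform of $\widetilde{F}(u,v)$ reproduces exactly the $R$-to-$P$ inversion of Kazhdan--Lusztig theory. This requires fitting together, with the correct signs and half-integer exponents, Dyer's reflection-ordering formula for the $R_{u,z}$, the Chebyshev-type substitution $q^{1/2}-q^{-1/2}$ realized by the lattice paths, and the degree/uniqueness extraction of $P_{u,v}$ from the $R_{u,z}$. The unfolding in the first paragraph and the bookkeeping of supports are, by contrast, routine.
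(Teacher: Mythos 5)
The first thing to note is that the paper does not prove Theorem \ref{FtoKL} at all: it is quoted from the literature (it is Proposition 3.1 of \cite{BB}, which in turn rests on Theorem 5.5.7 of \cite{BjBr}, i.e.\ on Brenti's lattice-path formula from \cite{BreJAMS}). So your proposal is not competing with an argument in the paper; it is an attempt to reprove the cited result. Your opening reduction is correct: the unwinding of $\mathcal{K}(\widetilde{F}(u,v))$ as $\sum_{\Delta\in B(u,v)}q^{-l(\Delta)/2}\Upsilon_{E_\prec(\Delta)}(q)$, the observation that $q^{-\ell(u,v)/2}P_{u,v}(q)$ is supported on strictly negative powers, and the remark that antisymmetry under $q\mapsto q^{-1}$ plus knowledge of the negative part determine the whole expression. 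Moreover your outline does mirror the architecture of the actual proof of the cited theorem: Dyer's reflection-ordering formula for the $R$-polynomials, the substitution $q^{1/2}-q^{-1/2}$ realized by the lattice paths, and the degree/uniqueness characterization of the $P_{u,v}$.

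However, as a proof the proposal has genuine gaps, and they are exactly the two pillars it rests on. First, the antisymmetry $\mathcal{K}(F)(1/q)=-\mathcal{K}(F)(q)$ for all $F\in\Pi$ is only conjectured (``I expect it to follow from a direct manipulation of the relations in Theorem \ref{BBrel}''); the involution $\Gamma\mapsto-\Gamma$ does not preserve the constraint $N(\Gamma)=E$, and repairing this failure using the Bayer--Billera relations is a real argument, not a remark. Second, and decisively, the identification of the strictly negative part with $q^{-\ell(u,v)/2}P_{u,v}(q)$ is only announced (``I would show that\ldots''). To carry it out you must prove that the path sum obeys the Kazhdan--Lusztig recursion, i.e.\ relate $\sum_{\Delta\in B(u,v)}q^{-l(\Delta)/2}\Upsilon_{E_\prec(\Delta)}$ to $\sum_{u<z\le v}R_{u,z}(q)\cdot(\cdots)$. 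The natural splitting of a path at the end of its initial increasing run does produce an increasing path from $u$ to some $z$ (whence Dyer's formula) followed by a path from $z$ to $v$, but the descent string of the concatenation carries a forced $1$ at the junction, and $\Upsilon$ does \emph{not} factor over such concatenations, because the lattice paths in \eqref{Upsdef} obey a global sign constraint rather than a local one. Overcoming exactly this obstruction is the content of Brenti's theorem (the machinery of \cite{BreJAMS}), so as it stands your argument reduces the statement to its own hardest ingredient. You flagged this yourself; until that step and the antisymmetry lemma are executed, what you have is a correct plan rather than a proof.
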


Given $E \in \two ^{\ast}$ we let the {\em exponent composition} of $E$ be the unique 
 composition $\alpha =(\alpha _{1},\alpha _{2}, \ldots )$  such that
 \[
 E= \left\{ \begin{array}{ll} 
 \underbrace{1 \ldots 1}_{\alpha _{1}} \, \underbrace{0 \ldots 0}_{\alpha _{2}} \,
 \underbrace{1 \ldots 1}_{\alpha _{3}} \ldots , & \mbox{if $E_{1}=1$,} \\
 \underbrace{0 \ldots 0}_{\alpha _{1}} \, \underbrace{1 \ldots 1}_{\alpha _{2}} \,
 \underbrace{0 \ldots 0}_{\alpha _{3}} \ldots , & \mbox{if $E_{1}=0$.} 
 \end{array} \right. \]
 So, for example, the exponent composition of $00110$ is $(2,2,1)$.
The following is a restatement of Corollary 6.7 of \cite{BreJAMS}. Note that 
$\Upsilon_E \neq 0$ if the exponent composition of $E$ has only one
part.

\begin{cor}
\label{upsilonzero}
Let $E \in \two^{\ast}$ be such that $\ell(\alpha) \geq 2$, where $\alpha$ 
is the exponent composition of $E$. Then 
$\Upsilon_{E} \neq 0$ if and only if $\alpha_2 \equiv \alpha_3 \equiv \cdots \equiv \alpha_{\ell(\alpha)-1} \equiv 1$ (mod 2) and 
$\alpha_1 \equiv E_1$ (mod 2). $\Box$
\end{cor}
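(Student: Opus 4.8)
The plan is to reduce the vanishing question to a purely combinatorial existence statement about lattice paths, and then to settle that statement by tracking the values a path is forced to take. Put $n=\ell(E)+1$ and group the paths of $\mathcal{L}(E)$ by their number of up-steps. Since by \eqref{dplus} the quantity $d_+(\Gamma)$ is determined by the endpoint $\Gamma(n)$, each $\Gamma$ contributes the single power $q^{d_+(\Gamma)}$ to \eqref{Upsdef}, so
\[
\Upsilon_E(q)=(-1)^{m_0(E)}\sum_{k\ge 0}(-1)^k\,\bigl|\{\Gamma\in\mathcal{L}(E):d_+(\Gamma)=k\}\bigr|\,q^k .
\]
The coefficient of each $q^k$ is thus a fixed sign times a nonnegative integer, so no cancellation between distinct powers can occur. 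Hence $\Upsilon_E=0$ if and only if $\mathcal{L}(E)=\emptyset$, and it suffices to decide exactly when a lattice path with the prescribed sign pattern $N(\Gamma)=E$ exists.

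To analyse this, observe that a unit step can pass between the negative and the nonnegative half-line only through the two values $-1$ and $0$. Hence, at every boundary between two blocks of $E$ (where $E_i\neq E_{i+1}$) the pair $(\Gamma(i),\Gamma(i+1))$ is forced, equalling $(-1,0)$ at a $1\to 0$ transition and $(0,-1)$ at a $0\to 1$ one. Let $\sigma_j=\alpha_1+\cdots+\alpha_j$ be the position of the $j$-th block boundary, $1\le j\le r-1$ with $r=\ell(\alpha)$. Since the blocks alternate in type starting with an $E_1$-block, the forced value $\Gamma(\sigma_j)$ is $0$ if block $j$ is a $0$-block and $-1$ if it is a $1$-block; the parity relation $\Gamma(i)\equiv i\pmod 2$ then pins down the parity of $\sigma_j$, namely $\sigma_j$ is even precisely when block $j$ is a $0$-block. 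For $j=1$ this reads $\alpha_1\equiv E_1\pmod 2$, and comparing the requirements at consecutive boundaries $j-1,j$ (whose blocks are of opposite type) forces $\alpha_j=\sigma_j-\sigma_{j-1}\equiv 1\pmod 2$ for $2\le j\le r-1$. This gives necessity of the stated congruences; note that the last block $\alpha_r$ is unconstrained, since after $\sigma_{r-1}$ the path may continue freely to position $n$.

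Conversely, assuming the congruences hold, I would construct a path block by block. The forced boundary values are then consistent in parity, and within each block the two forced endpoints differ by at most $1$ while the number of intervening steps has the matching parity, so the interior can be completed by a zig-zag staying on the correct side (bouncing between $-1$ and $-2$ inside a $1$-block and between $0$ and $1$ inside a $0$-block), the final block being finished arbitrarily; this produces a $\Gamma\in\mathcal{L}(E)$ and yields sufficiency. I expect the main obstacle to be the bookkeeping of the middle paragraph: one must verify that the forced boundary values are mutually consistent across all boundaries and correctly translate the geometric parity constraints on the $\sigma_j$ into the arithmetic congruences on the parts $\alpha_j$, keeping the first and last blocks separate from the internal ones.
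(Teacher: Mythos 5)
Your proof is correct, and it is worth pointing out that the paper itself contains no proof of this statement to compare against: Corollary \ref{upsilonzero} is stated as a restatement of Corollary 6.7 of \cite{BreJAMS} and closed immediately with a $\Box$, so the paper's ``proof'' is a citation. Your argument therefore supplies a self-contained, elementary proof, and both halves of it check out. The reduction is sound: by \eqref{Upsdef} every $\Gamma \in \mathcal{L}(E)$ with $d_+(\Gamma)=k$ contributes $(-1)^{m_0(E)+k}q^k$, so the coefficient of $q^k$ in $\Upsilon_E$ is $(-1)^{m_0(E)+k}$ times a cardinality; since contributions to a fixed power all carry the same sign, $\Upsilon_E=0$ if and only if $\mathcal{L}(E)=\emptyset$. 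The combinatorial half is also right: a unit step can only cross between the negative and nonnegative values through the pair $\{-1,0\}$, so at each block boundary $\sigma_j=\alpha_1+\cdots+\alpha_j$ ($1\le j\le \ell(\alpha)-1$) the value $\Gamma(\sigma_j)$ is forced to be $-1$ or $0$ according to the type of block $j$; combined with $\Gamma(i)\equiv i \pmod 2$ this forces $\sigma_j$ odd exactly when block $j$ is a $1$-block, which translates precisely into $\alpha_1\equiv E_1 \pmod 2$ and $\alpha_j\equiv 1 \pmod 2$ for $2\le j\le \ell(\alpha)-1$, with $\alpha_{\ell(\alpha)}$ unconstrained because position $n$ carries no sign condition. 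Your zig-zag construction (bouncing between $-1,-2$ inside $1$-blocks and between $0,1$ inside $0$-blocks, with the parities guaranteeing the forced endpoints are reachable) settles the converse. What your route buys, compared with invoking the machinery of \cite{BreJAMS}, is conceptual transparency: the vanishing of $\Upsilon_E$ is exposed as purely an existence question for lattice paths with prescribed sign pattern, with no need for any explicit or recursive formula for the polynomials themselves.
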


\section{A characterization of the peak algebra}\label{betarela}

Our purpose in this section is to give a new characterization of the peak
subalgebra of the algebra of quasisymmetric functions. More precisely, we give 
necessary and sufficient conditions on the coefficients of a quasisymmetric 
function $F$, when expressed as a linear combination of fundamental quasisymmetric
functions, for $F$ to be in the peak subalgebra. Our result is the following.

\begin{thm}
\label{dualBB}
Let $F = \sum_{E \in \twoindex ^{\ast}} \beta_E \, L_E \in {\mathcal Q}$.
Then the following are equivalent:
\begin{enumerate}
\item[i)]
$F \in \Pi$;
\item[ii)]
for all $E,F\in {\two}^*$ 
\[
\beta_{EF}+\beta_{\check EF}= \beta_{E\bar F}+\beta_{\check E \bar F}.
\]
\end{enumerate}
\end{thm}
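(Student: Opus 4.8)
The plan is to derive the statement from the Bayer--Billera characterization of $\Pi$ in the monomial basis (Theorem \ref{BBrel}) by passing to the fundamental basis and translating the relations. Writing the given function as $\sum_E c_E M_E=\sum_E\beta_E L_E$ and using $L_G=\sum_{E\geq G}M_E$ gives, for words of a fixed length, $c_E=\sum_{G\leq E}\beta_G$, equivalently (Möbius inversion on the Boolean lattice, i.e. the flag $f$-to-$h$ transform of \S 2) $\beta_E=\sum_{G\leq E}(-1)^{|S(E)|-|S(G)|}c_G$. I would also reformulate (ii): writing $E=E'a$ when $E\neq\varepsilon$ one has $\{EF,\check E F\}=\{E'0F,E'1F\}$, so (ii) is equivalent to the conjunction of
\[
\text{(a)}\quad \beta_W=\beta_{\overline{W}}\ \text{ for all }W,\qquad\text{(b)}\quad \sum_{c\in\two}\beta_{GcF}=\sum_{c\in\two}\beta_{Gc\overline{F}}\ \text{ for all }G,F,
\]
the cases $E=\varepsilon$ and $E\neq\varepsilon$ respectively.

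For (ii)$\Rightarrow$(i) I would substitute the change of basis into a Bayer--Billera relation. As the only subwords of $E_{i,j}$ are $0^j$ and $E_{i,j}$, and $\sum_{i=1}^{j}(-1)^{i-1}=\chi_{\textrm{odd}}(j)$, the relation becomes $\sum_{H'\leq E,\,H''\leq F}S_j(H',H'')=0$, where $S_j(A,B)\eqdef\sum_{i=1}^{j}(-1)^{i-1}\beta_{A E_{i,j} B}-\chi_{\textrm{odd}}(j)\beta_{A 0^j B}$. Applying the instance of (b) with $G=A0^{i-1}$, $F=0^{j-i}B$ to each $\beta_{A E_{i,j} B}$ and telescoping the alternating sum yields the closed form
\[
S_j(A,B)=\beta_{A\,1^j\,\overline{B}}+(-1)^{j-1}\beta_{A\,0^j\,\overline{B}}-2\chi_{\textrm{odd}}(j)\beta_{A\,0^j\,B}.
\]
Because $E\in\overline{\two^*1}$ and $F\in\overline{1\two^*}$, a Möbius inversion one block at a time reduces the whole family of relations to the vanishing of $S_j$ after summing over the last letter of the prefix and/or the first letter of the suffix (with the empty-block boundary cases kept separately). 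On each such summed expression one use of (b) replaces every $\overline{B}$ by $B$, and either a second use of (b) or an application of (a) identifies the $1^j$-prefix term with the $0^j$-prefix term; all terms then become a common quantity times $1+(-1)^{j-1}-2\chi_{\textrm{odd}}(j)=0$, so $F\in\Pi$.

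The converse (i)$\Rightarrow$(ii) runs the same reduction backwards: the Möbius inversions above are reversible, so (i) is equivalent to the vanishing of exactly those summed versions of $S_j$, and it remains to recover (a) and (b), for all word lengths, from these $j$-term conditions. This matching is the main obstacle. It holds but must be assembled by induction: the $j=1$ summed condition gives (b) for length-one suffixes, this together with the $j=2$ instance of $S_j(\varepsilon,\varepsilon)=0$ gives the first cases of (a), and one then peels off one letter at a time to reach arbitrary lengths. Throughout, the only genuinely computational points are keeping track of the parity factor $\chi_{\textrm{odd}}(j)$ and of the empty-block cases; the consistency of the whole scheme is reflected in the fact, from Theorems \ref{BBrel} and \ref{BayBil}, that the degree-$n$ component of $\Pi$ has dimension $f_n$.
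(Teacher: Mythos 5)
Your direction (ii)$\Rightarrow$(i) is essentially sound, and it is in substance the paper's own argument: your reformulation of (ii) as the complementation relations (a) together with the one-letter-sum relations (b), your telescoped closed form for $S_j(A,B)$ (which is a cleaner substitute for the paper's inductive Lemma \ref{solvingones}), and the final cancellation by summed applications of (a) and (b) (the counterpart of Lemma \ref{relga} and its symmetric version in the paper's Proposition \ref{alphainan}) all check out, up to routine bookkeeping of how many applications of (a) and (b) are needed to turn $\sum_{c,c'}\beta_{Gc1^jc'H}$ into $\sum_{c,c'}\beta_{Gc0^jc'H}$.

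The genuine gap is the converse (i)$\Rightarrow$(ii), which you yourself flag as ``the main obstacle'' and then only sketch. The proposed induction does not get off the ground: relations for different word lengths $n$ live in disjoint graded components of ${\mathcal Q}$, so nothing can be ``peeled'' from short words to long ones, and your stated base case for recovering (a) --- the instance $S_2(\varepsilon,\varepsilon)=0$ --- exists only when $n=2$. Worse, (a) is not a consequence of (b) plus the low-$j$ conditions: already at $n=3$ the summed conditions with $j\le 2$ span a space of rank $4$, one short of the rank $5$ of the dual relations, and recovering $\beta_W=\beta_{\overline{W}}$ there genuinely requires the $j=3$ condition; for general $n$ one needs the conditions for all $j\le n$ combined in a pattern that your proposal never specifies, and specifying it is exactly the hard content of this direction. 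The paper avoids this entirely: it never derives (ii) from (i) by manipulation of relations, but instead proves (ii)$\Rightarrow$(i) (Proposition \ref{alphainan}), shows that the solution space of (ii) has dimension at least $f_{n+1}$ by exhibiting inside it the coefficient vectors of $cd$-polynomials (Proposition \ref{dimbn}), and then concludes equality of the two spaces from Bayer--Billera's theorem $\dim\mathscr{A}_n=f_{n+1}$ (Theorem \ref{BayBil}) together with the injectivity of the transform $\beta\mapsto\alpha$. You cite the Fibonacci dimension of the graded components of $\Pi$ only as a ``consistency check''; promoting it to the closing argument --- which still requires proving the lower bound on the dimension of the space cut out by (ii), i.e.\ the content of Proposition \ref{dimbn} --- is the natural way to complete your proof, and much easier than assembling the missing induction.
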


The rest of this section is devoted to the proof of this result, and to some 
consequences of it. 

Let $\mathscr B_n$ be the vector subspace of $\mathscr V_n$ generated by the flag h-vectors of all Eulerian posets of rank $n+1$. Then $\mathscr B_n$ has clearly dimension $f_{n+1}$ by 
Theorem \ref{BayBil}, and the result that we wish to prove is clearly equivalent to 
the following one.

\begin{thm}
\label{h-relations}
 Let $\alpha,\beta\in \mathscr V_n$ be such that $\alpha_F=\sum_{E\leq F} \beta_E$ for all $F\in {\two}^n$. Then the following are equivalent
\begin{itemize}  
\item $\alpha$ satisfies Bayer-Billera relations (i.e. $\alpha\in \mathscr A_n$);
\item for all $E,F\in {\two}^*$ such that $EF\in {\two}^n$
\begin{equation}\label{dualrel}\beta_{EF}+\beta_{\check EF}= \beta_{E\bar F}+\beta_{\check E \bar F}.
\end{equation}
\end{itemize}
\end{thm}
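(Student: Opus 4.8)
The goal is to show the equivalence, under the relation $\alpha_F=\sum_{E\leq F}\beta_E$ (equivalently $\beta_F=\sum_{E\leq F}(-1)^{|F-E|}\alpha_E$), between $\alpha$ satisfying the Bayer--Billera relations and $\beta$ satisfying the ``dual'' relations \eqref{dualrel}. Since both the space $\mathscr A_n$ of vectors satisfying Bayer--Billera and the space $\mathscr B_n$ of vectors satisfying \eqref{dualrel} are cut out by linear equations, and the map $\beta\mapsto\alpha$ is the invertible zeta transform on the Boolean lattice $\two^n$, it suffices to match the two systems of linear relations. I plan to prove the equivalence \emph{termwise}: I will show that for each fixed triple $E\in\overline{\two^*1}$, $F\in\overline{1\two^*}$, $j\geq 1$ with $E0^jF\in\two^n$, the single Bayer--Billera relation
\[
\sum_{i=1}^{j}(-1)^{i-1}\alpha_{E\,E_{i,j}\,F}=2\chi_{\textrm{odd}}(j)\,\alpha_{E\,0^jF}
\]
can be rewritten, via the inclusion--exclusion formula expressing each $\alpha$ in terms of the $\beta$'s, as a consequence of (and is in fact equivalent to, as the triple varies) the relations \eqref{dualrel}. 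The cleanest route is to translate everything into the $\beta$-variables and verify that the resulting identities are spanned by the family \eqref{dualrel}.

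\textbf{First steps.}
First I would reduce the dual relations \eqref{dualrel} to a more transparent form. Writing $EF$ with $|E|=k$, the passage from $F$ to $\check F$ flips only the last bit, and $F\mapsto\bar F$ complements all bits of the tail; the key observation is that $\beta_{EF}+\beta_{\check EF}$ only depends on the truncation of the first block up to position $k-1$, so \eqref{dualrel} is really a statement about how $\beta$ behaves when one toggles the coordinate at position $k$ together with complementing the suffix. I would record the equivalent ``summed'' form: for every prefix $E$ and every suffix length, the quantity $\beta_{EF}+\beta_{\check EF}-\beta_{E\bar F}-\beta_{\check E\bar F}$ vanishes, and I would note these relations are closed under the natural suffix-extension maps, which lets one generate all of them from the ``atomic'' cases where $F$ is a single block $0^jG$ or $1^jG$. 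Then I would carry out the zeta/Möbius substitution: insert $\alpha_{E'}=\sum_{E\le E'}\beta_E$ into the Bayer--Billera relation and expand the alternating sum $\sum_{i=1}^j(-1)^{i-1}(\cdots)$. Because $E_{i,j}=0^{i-1}10^{j-i}$, the chains $E\le E\,E_{i,j}\,F$ range over a controlled family, and the alternating sum should collapse most terms by telescoping, leaving exactly a combination of the $\beta$-relations \eqref{dualrel} (the factor $2\chi_{\textrm{odd}}(j)$ on the right is precisely what the parity of the telescoping produces).

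\textbf{The main obstacle.}
The crux, and the step I expect to be most delicate, is the bookkeeping of the inclusion--exclusion expansion: controlling \emph{which} $\beta_G$ appear and with what sign when one substitutes the zeta transform into $\sum_{i=1}^j(-1)^{i-1}\alpha_{E\,E_{i,j}\,F}$ and into $2\chi_{\mathrm{odd}}(j)\alpha_{E\,0^jF}$, and showing the difference is an integer combination of the atoms $\beta_{EF}+\beta_{\check EF}-\beta_{E\bar F}-\beta_{\check E\bar F}$. The parity factor $2\chi_{\mathrm{odd}}(j)$ strongly suggests organizing the sum over the middle block by pairing an index $i$ with $i+1$, so that the complementation $F\mapsto\bar F$ in \eqref{dualrel} corresponds to flipping a run of coordinates in the range $(k,k+j]$; the cancellation for even $j$ versus the surviving $2\alpha_{E0^jF}$ for odd $j$ should fall out of this pairing. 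To finish, I would verify dimensions: Theorem \ref{BayBil} gives $\dim\mathscr A_n=f_{n+1}$, and since $\beta\mapsto\alpha$ is a linear isomorphism, it carries the solution space of \eqref{dualrel} bijectively onto $\mathscr A_n$; hence it is enough to show one inclusion of solution spaces (say, Bayer--Billera $\Rightarrow$ \eqref{dualrel}) together with $\dim\mathscr B_n\le f_{n+1}$, or simply to check that the two termwise-matched systems have the same solution set. Establishing the reverse implication may require showing the atomic relations \eqref{dualrel} are not merely implied by but \emph{span} the same space as the Bayer--Billera relations, which is where the careful sign analysis pays off.
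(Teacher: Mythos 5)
Your overall framing is correct, and the first half of your plan is sound: substituting $\alpha_{E'}=\sum_{G\le E'}\beta_G$ into a Bayer--Billera relation and showing that the result follows from \eqref{dualrel} is exactly what the paper does (Proposition \ref{alphainan}, prepared by Lemmas \ref{solvingones} and \ref{relga}), and your pairing heuristic for the factor $2\chi_{\mathrm{odd}}(j)$ is the right intuition for that computation. The genuine gap is in the second half. Your primary suggestion --- that ``as the triple varies'' the translated Bayer--Billera relations \emph{span} the same space of linear functionals as the relations \eqref{dualrel}, and that this will ``fall out'' of careful sign analysis --- is precisely the content of the theorem, and there is no direct route to it: writing an individual relation \eqref{dualrel} as an explicit combination of translated Bayer--Billera relations is not something the telescoping produces, and the paper never attempts it. Your fallback correctly observes that a dimension count closes the argument, but it does not dovetail with your main plan: the inclusion your substitution argument would deliver (\eqref{dualrel} implies Bayer--Billera) already gives $\dim\mathscr B'_n\le\dim\mathscr A_n=f_{n+1}$ by Theorem \ref{BayBil} and injectivity of the zeta transform, so it must be combined with the \emph{lower} bound $\dim\mathscr B'_n\ge f_{n+1}$; your fallback instead invokes the opposite implication together with the upper bound, and you offer a method for neither. (Note also that your clause ``it carries the solution space of \eqref{dualrel} bijectively onto $\mathscr A_n$'' assumes the theorem itself; only the reduction after ``hence'' is legitimate.)

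The missing idea is the paper's Proposition \ref{dimbn}: the lower bound $\dim\mathscr B'_n\ge f_{n+1}$ is proved by an explicit construction, not by manipulating the relations. One takes a homogeneous degree-$n$ polynomial $P$ lying in $\mathbb Q\langle a+b,\,ab+ba\rangle\subset\mathbb Q\langle a,b\rangle$, expands it as $P=\sum_{E\in\two^n}\beta(P)_E\,\mu_E$, and checks by induction on the $cd$-monomial structure (peeling off a factor $a+b$ or $ab+ba$ on the right) that the coefficient vector $\beta(P)$ satisfies \eqref{dualrel}; since the homogeneous $cd$-polynomials of degree $n$ form a space of dimension $f_{n+1}$ and $P\mapsto\beta(P)$ is injective, the bound follows. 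Alternatively one could aim for your upper bound $\dim\mathscr B'_n\le f_{n+1}$ by an elimination argument showing that every $\beta\in\mathscr B'_n$ is determined by its values on sparse sequences (Lemma \ref{solvingones} is a first step in that direction, and the full statement is comparable in substance to Theorem \ref{basis}), but nothing in your proposal supplies such an argument. As written, your outline can prove at most one of the two implications.
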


We refer to the relations appearing in \eqref{dualrel} as the \emph{dual Bayer Billera-relations}.
   Note that the relations $\beta_F=\beta_{\bar F}$ appear as a special case of \eqref{dualrel} by letting $E=\epsilon$. 
Let $\mathscr B'_n$ be the subspace of $\mathscr V_n$ defined by the relations \eqref{dualrel}. 
The proof of Theorem \ref{h-relations} is a consequence of the following facts which are proved in Propositions \ref{alphainan} and \ref{dimbn} respectively:
\begin{enumerate}
   \item if $\beta\in \mathscr B'_n$ and $\alpha \in \mathscr V_n$ is given by $\alpha_F=\sum_{E\leq F}\beta_E$, then $\alpha\in \mathscr A_n$.
\item $\dim(\mathscr B'_n) \geq f_{n+1}$.
\end{enumerate}

\begin{lem}
\label{solvingones}
If $\beta \in \mathscr B'_n$  then for all $E,F\in {\two}^*$ and $j>0$ such that $E1^jF\in {\two}^n$ we have
\[
   \beta_{E1^jF}=\sum_{i=1}^{j-1}(-1)^{i-1}\beta_{EE_{i,j}\bar F}+(-1)^{j-1}\beta_{EE_{j,j}F}+\chi_{\mathrm{even}}(j)\beta_{E0^j\bar F}.
\]
\end{lem}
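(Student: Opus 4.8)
The plan is to prove the identity by induction on $j$, peeling off one leading $1$ at each stage by means of a single application of the dual Bayer--Billera relations \eqref{dualrel}. The base case $j=1$ is immediate: the sum $\sum_{i=1}^{j-1}$ is empty, $\chi_{\mathrm{even}}(1)=0$, and $E_{1,1}=1$, so the right-hand side collapses to $\beta_{E1F}$ and the asserted equality is a tautology. No relation is needed here; all the content lies in the inductive step.

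For the inductive step I would first split $E1^jF$ as $(E1)(1^{j-1}F)$ and apply \eqref{dualrel} to this splitting, taking the first argument to be $E1$ and the second to be $1^{j-1}F$. Since flipping the last bit of $E1$ produces $E0$, while $\overline{1^{j-1}F}=0^{j-1}\bar F$, this gives
\[
\beta_{E1^jF}+\beta_{E01^{j-1}F}=\beta_{E10^{j-1}\bar F}+\beta_{E0^j\bar F}.
\]
Using $E_{1,j}=10^{j-1}$ I rewrite this as
\[
\beta_{E1^jF}=\beta_{EE_{1,j}\bar F}+\beta_{E0^j\bar F}-\beta_{E01^{j-1}F}.
\]
The key observation is that the remaining term $\beta_{E01^{j-1}F}$ again has exactly the shape required by the lemma, with the block of $1$'s shortened to length $j-1$ and the prefix $E$ replaced by $E0$, so the induction hypothesis applies to it.

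Next I would expand $\beta_{(E0)1^{j-1}F}$ by the induction hypothesis and translate the resulting terms back into the indexing relative to the prefix $E$. The substitutions $E0\,E_{i,j-1}=EE_{i+1,j}$ for $1\leq i\leq j-2$, together with $E0\,E_{j-1,j-1}=EE_{j,j}$ and $E0\,0^{j-1}=E0^j$, convert that expansion into a combination of the terms $\beta_{EE_{i,j}\bar F}$ for $2\leq i\leq j-1$ (after the reindexing $i\mapsto i+1$), the term $\beta_{EE_{j,j}F}$, and the term $\beta_{E0^j\bar F}$. Substituting this into the displayed formula for $\beta_{E1^jF}$ and collecting is then routine.

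The only delicate part — and the single place where an error could creep in — is tracking the signs and the parity factor. The reindexing turns each $(-1)^{i-1}$ into $(-1)^i$, and the leading minus sign in front of $\beta_{E01^{j-1}F}$ flips it back to $(-1)^{i-1}$, so the $i=1$ term produced directly by the first step merges with the reindexed sum to give precisely $\sum_{i=1}^{j-1}(-1)^{i-1}\beta_{EE_{i,j}\bar F}$; likewise $-(-1)^{j-2}=(-1)^{j-1}$ yields the correct coefficient of $\beta_{EE_{j,j}F}$. For the constant term one needs the identity $1-\chi_{\mathrm{even}}(j-1)=\chi_{\mathrm{odd}}(j-1)=\chi_{\mathrm{even}}(j)$: the coefficient $+1$ of $\beta_{E0^j\bar F}$ from the first step combines with the $-\chi_{\mathrm{even}}(j-1)$ coming from the induction hypothesis to produce exactly $\chi_{\mathrm{even}}(j)$, completing the induction.
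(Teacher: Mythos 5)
Your proposal is correct and coincides with the paper's own proof in every essential respect: the same induction on $j$ with the trivial base case $j=1$, the same single application of the dual Bayer--Billera relation \eqref{dualrel} at the splitting $(E1)(1^{j-1}F)$, and the same application of the induction hypothesis to $\beta_{E01^{j-1}F}$ followed by the reindexing $E0\,E_{i,j-1}=EE_{i+1,j}$ and the parity identity $1-\chi_{\mathrm{even}}(j-1)=\chi_{\mathrm{even}}(j)$. Your sign bookkeeping matches the paper's computation exactly, so nothing further is needed.
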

\begin{proof}
   We proceed by induction on $j$. If $j=1$ the relation is a trivial identity and so we assume $j\geq 2$. We then have, using Eq. \eqref{dualrel} and our induction hypothesis, that 
\begin{align*}
   \beta_{E 1^j F}&=
\beta_{E 10^{j-1}\bar  F} +\beta_{E 0^j\bar  F}- \beta_{E 01^{j-1} F}\\
&=
\beta_{E E_{1,j}\bar  F} +\beta_{E 0^j\bar  F}-\sum_{i=1}^{j-2}(-1)^{i-1}\beta_{E 0 E_{i,j-1}\bar  F}+(-1)^{j-1} \beta_{E 0 E_{j-1,j-1} F}\\&\hspace{5mm}-\chi_{\textrm{even}}(j-1)\beta_{E 0^{j}\bar  F}\\
&=\sum_{i=1}^{j-1}(-1)^{i-1}\beta_{EE_{i,j}\bar F}+(-1)^{j-1}\beta_{EE_{j,j}F}+\chi_{\textrm{even}}(j)\beta_{E0^j\bar F}.
\end{align*}
\end{proof}

\begin{lem}\label{relga}
If $\beta\in \mathscr B'_n$ then for all $E\in {\two}^*$ and $F\in \overline{{1\bf 2}^*}$ such that $EF\in {\two}^n$, we have
\[
   \sum_{F'\leq F}\beta_{EF'}=\sum_{F'\leq F}\beta_{E\overline{F'}}.
\]

\end{lem}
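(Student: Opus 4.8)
The plan is to prove the identity directly from the dual Bayer--Billera relations \eqref{dualrel}, with essentially no induction: the hypothesis $F\in\overline{1\two^*}$ is exactly what is needed to free up the letter of $F'$ immediately to the right of $E$, and a single application of \eqref{dualrel} converts the left-hand sum into the right-hand sum term by term.

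First I would dispose of the trivial case $F=\epsilon$, where both sides equal $\beta_E$ (recall $\overline{\epsilon}=\epsilon$). So assume $F\neq\epsilon$; since $F\in\overline{1\two^*}$ the first letter of $F$ is $1$, and I write $F=1\hat F$ with $\hat F\in\two^{\ell(F)-1}$. Because this leading letter is $1$, every $F'\leq F$ is obtained by choosing its first letter $b\in\{0,1\}$ freely and then an arbitrary $\hat F'\leq\hat F$ for the remaining letters; that is, $F'\mapsto(b,\hat F')$ is a bijection from $\{F':F'\leq F\}$ onto $\{0,1\}\times\{\hat F':\hat F'\leq\hat F\}$. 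Grouping the left-hand sum according to this bijection gives
\[
\sum_{F'\leq F}\beta_{EF'}=\sum_{\hat F'\leq\hat F}\bigl(\beta_{E0\hat F'}+\beta_{E1\hat F'}\bigr).
\]

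The key step is then to apply \eqref{dualrel} to each inner pair. Taking the string $E0$ in the role of the first argument (so that its last-letter flip is $\check{(E0)}=E1$) and $\hat F'$ in the role of the second, the relation reads $\beta_{E0\hat F'}+\beta_{E1\hat F'}=\beta_{E0\overline{\hat F'}}+\beta_{E1\overline{\hat F'}}$; the length bookkeeping works out, since $E0\hat F'$ has length $\ell(E)+\ell(F)=n$. Substituting, the left-hand side becomes $\sum_{\hat F'\leq\hat F}\bigl(\beta_{E0\overline{\hat F'}}+\beta_{E1\overline{\hat F'}}\bigr)$. Finally I would expand the right-hand side of the claim through the same bijection, using $\overline{F'}=\overline{b\hat F'}=\bar b\,\overline{\hat F'}$, which yields $\sum_{\hat F'\leq\hat F}\bigl(\beta_{E1\overline{\hat F'}}+\beta_{E0\overline{\hat F'}}\bigr)$ --- literally the same sum, so the two sides agree.

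I do not expect a genuine obstacle here; the only point requiring care is the bookkeeping that \eqref{dualrel} is applied once per $\hat F'$, with the last-letter flip occurring precisely at the boundary between $E$ and the free bit, together with the observation that the complement map $F'\mapsto\overline{F'}$ merely interchanges the two choices of the leading bit rather than fixing each. The conceptual point worth stressing is that the hypothesis $F\in\overline{1\two^*}$ is used only to guarantee that this leading bit is unconstrained, which is what makes the argument collapse to one invocation of the dual relation.
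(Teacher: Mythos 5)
Your proof is correct and follows essentially the same route as the paper's: dispose of $F=\epsilon$, write $F=1\hat F$ so the leading bit of $F'$ is free, expand both sums over that free bit, and apply the dual relation \eqref{dualrel} once with $E0$ (whose last-letter flip is $E1$) as the first argument. The only difference is cosmetic ordering of the expansions, so there is nothing to add.
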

\begin{proof}
If $F=\varepsilon$ the result is trivial, so we can assume $F=1 G$ for some $G\in {\two}^*$. Then
\begin{align*}
   \sum_{F' \leq F}\beta_{E F'}&=
   \sum_{G' \leq G}(\beta_{E0G'}+\beta_{E1G'})
\end{align*}
while
\[
   \sum_{F'\leq F }\beta_{E \overline{F'}}=
   \sum_{G'\leq G }(\beta_{E\overline{0G'}}+\beta_{E\overline{1G'}})=
   \sum_{G'\leq G}(\beta_{E 1 \overline{G'}}+\beta_{E 0 \overline{G'}}),
\]
and the result follows from \eqref{dualrel}.
\end{proof}
Using the relation $\beta_E=\beta_{\bar E}$, one can obtain in an analogous way the following symmetric version of 
Lemma \ref{relga}: for all $\beta\in \mathscr B'_n$, $E\in\overline{{\two}^*1}$ and $F\in {\two}^*$ such that $EF\in {\two}^n$ we have
\[
   \sum_{E'\leq E}\beta_{E'F}=\sum_{E'\leq E}\beta_{\overline {E'} {F}}.
\]
\begin{prop}\label{alphainan}
   Let $\beta\in \mathscr B'_n$ and $\alpha\in V_n$ be given by 
\[
   \alpha_F=\sum_{E\leq F}\beta_E
\]
for all $F\in {\two}^n$. Then $\alpha\in \mathscr A_n$.
\end{prop}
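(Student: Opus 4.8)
The plan is to expand every $\alpha_{EE_{i,j}F}$ through the defining relation $\alpha_G=\sum_{G'\le G}\beta_{G'}$ and reduce the Bayer--Billera relation for $\alpha$ to an identity among the $\beta$'s that can be fed to Lemmas \ref{solvingones} and \ref{relga}. Fix $E\in\overline{\two^*1}$, $F\in\overline{1\two^*}$ and $j\ge 1$ with $E0^jF\in\two^n$. Since $S(G')\subseteq S(EE_{i,j}F)$ forces $G'$ to split as $E'mF'$ with $E'\le E$, $F'\le F$ and $m\le E_{i,j}$ (so $m\in\{0^j,E_{i,j}\}$), I obtain
\[
\alpha_{EE_{i,j}F}=\sum_{E'\le E}\sum_{F'\le F}\big(\beta_{E'0^jF'}+\beta_{E'E_{i,j}F'}\big).
\]
Taking the alternating sum and using $\sum_{i=1}^j(-1)^{i-1}=\chi_{\textrm{odd}}(j)$, the first block contributes $\chi_{\textrm{odd}}(j)\,\alpha_{E0^jF}$ (note $\alpha_{E0^jF}=\sum_{E'\le E,\,F'\le F}\beta_{E'0^jF'}$), so it remains to prove $\sum_{E'\le E,\,F'\le F}T(E',F')=\chi_{\textrm{odd}}(j)\,\alpha_{E0^jF}$, where $T(E',F')\eqdef\sum_{i=1}^j(-1)^{i-1}\beta_{E'E_{i,j}F'}$; this yields exactly the required factor $2\chi_{\textrm{odd}}(j)$.

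Next I would compute the suffix sum of $T$. Summing the identity of Lemma \ref{solvingones} (applied with $E',F'$ in place of $E,F$) over $F'\le F$ and using Lemma \ref{relga} to replace each $\overline{F'}$ by $F'$ inside $\sum_{F'\le F}$, the stray term $(-1)^{j-1}\beta_{E'E_{j,j}F'}$ supplies the missing $i=j$ summand and everything collapses to
\[
\sum_{F'\le F}T(E',F')=\sum_{F'\le F}\beta_{E'1^jF'}-\chi_{\textrm{even}}(j)\sum_{F'\le F}\beta_{E'0^jF'}.
\]
Summing further over $E'\le E$, the whole proof is reduced to the single clean identity
\[
\sum_{E'\le E,\,F'\le F}\beta_{E'1^jF'}=\sum_{E'\le E,\,F'\le F}\beta_{E'0^jF'},
\]
since then the previous display collects to $(1-\chi_{\textrm{even}}(j))\,\alpha_{E0^jF}=\chi_{\textrm{odd}}(j)\,\alpha_{E0^jF}$.

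I expect this last identity to be the crux. My plan to prove it is to first apply the special case $E=\epsilon$ of \eqref{dualrel}, namely $\beta_G=\beta_{\bar G}$, to rewrite $\beta_{E'1^jF'}=\beta_{\overline{E'}0^j\overline{F'}}$, trading the all-ones middle block for an all-zeros one at the cost of barring both ends. Then I would remove the bar on the prefix with the symmetric form of Lemma \ref{relga} (legitimate because $E\in\overline{\two^*1}$ ends in $1$) and the bar on the suffix with Lemma \ref{relga} itself (legitimate because $F\in\overline{1\two^*}$ starts with $1$), each step acting on a complete inner summation and hence leaving the double sum unchanged. The main obstacle throughout is the bookkeeping of bars and signs: the raw relation \eqref{dualrel} produces \emph{sums} rather than the alternating \emph{differences} one wants, so the argument genuinely needs both orientations of Lemma \ref{relga} together with $\beta_G=\beta_{\bar G}$, and it is precisely the boundary hypotheses on $E$ and $F$ that make the two bar-removals available at exactly the right places. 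Combining the three displays gives $\sum_{i=1}^{j}(-1)^{i-1}\alpha_{EE_{i,j}F}=2\chi_{\textrm{odd}}(j)\,\alpha_{E0^jF}$, that is, $\alpha\in\mathscr A_n$.
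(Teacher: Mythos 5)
Your proof is correct and follows essentially the same route as the paper's: expand $\alpha$ in terms of $\beta$, control the alternating sum $\sum_{i}(-1)^{i-1}\beta_{E'E_{i,j}F'}$ with Lemma \ref{solvingones}, and finish using both versions of Lemma \ref{relga} together with the complementation relation $\beta_G=\beta_{\overline{G}}$. The only minor differences are organizational: you isolate the clean crux identity $\sum_{E',F'}\beta_{E'1^jF'}=\sum_{E',F'}\beta_{E'0^jF'}$ where the paper instead invokes the dual Bayer--Billera relation directly on the $E_{j,j}$-terms, and your bookkeeping of the factor ($\chi_{\textrm{odd}}(j)$ rather than the $2\chi_{\textrm{odd}}(j)$ appearing in the paper's intermediate displays, which carry a compensating slip) is in fact the more careful of the two.
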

\begin{proof}
  Let  $j \geq 1$, $E\in \overline{{\two}^*1}$, $F\in \overline{1{\two}^*}$ be such that $E0^jF\in {\two}^n$. We have to show that 
\begin{equation}\label{bbo}
  2\chi_{\textrm{odd}}(j)\alpha_{E 0^j F}=\sum_{i=1}^{j}(-1)^{i-1} \alpha_{E E_{i,j} F}. 
\end{equation} 
By the definition of $\alpha$, Eq. \eqref{bbo} is equivalent to
\[
2 \chi_{\textrm{odd}}(j)\sum_{E'\leq E,\,F'\leq  F}\beta_{E'0^jF'}=   \sum_{E'\leq E,\,F'\leq F}\sum_{i=1}^{j}(-1)^{i-1}\beta_{E'  E_{i,j} F'}.
\]
By Lemma \ref{solvingones} this reduces to 
\begin{align*}
2 \chi&_{\textrm{odd}}(j) \sum_{E'\leq E,\,F'\leq F}\beta_{E'0^jF'} =\\& 
\sum_{E'\leq E,\,F'\leq F}\Big(\beta_{E' 1^{j}\overline{F'}}+(-1)^{j-1}\beta_{E' E_{j,j} F'}+(-1)^j\beta_{E' E_{j,j}\overline{F'}}-\chi_{\textrm{even}}(j)\beta_{E'0^jF'}\Big)
\end{align*}
and using the relations $\beta_{E' 1^j\overline {F'}}=\beta_{\overline{E'}0^jF'}$ and $\beta_{E'E_{j,j}F'}+\beta_{E'0^jF'}=\beta_{E'E_{j,j} \overline{F'}}+\beta_{E'0^j\overline{F'}}$ to conclude the proof we only have to verify that
\[
  2\chi_{\textrm{odd}}(j)\sum_{E'\leq E,\,F'\leq F} \beta_{E'0^j{F'}}=\sum_{E'\leq E,\,F'\leq F}(\beta_{\overline{E'}0^jF'}+(-1)^{j-1}\beta_{{E'}0^jF'});
\]
but this is an immediate consequence of the symmetric version of Lemma \ref{relga}.
\end{proof}

\begin{prop}\label{dimbn}
   We have $\dim(\mathscr B'_n)\geq f_{n+1}$.
\end{prop}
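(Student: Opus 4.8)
The plan is to prove the slightly stronger inclusion $\mathscr B_n \subseteq \mathscr B'_n$. Since the space $\mathscr B_n$ of flag $h$-vectors of Eulerian posets of rank $n+1$ has dimension $f_{n+1}$ (as recorded just before Theorem \ref{h-relations}), this immediately gives $\dim(\mathscr B'_n)\geq f_{n+1}$; combined with Proposition \ref{alphainan}, which supplies the reverse inclusion $\mathscr B'_n\subseteq \mathscr B_n$, it would in fact finish Theorem \ref{h-relations} outright, and it is not circular because the argument below uses only the $cd$-index calculus, not Proposition \ref{alphainan}. The point of reformulating in this way is that an element of $\mathscr B_n$ is, by definition, the coefficient vector of an $ab$-index $\sum_E h(P)_E\mu_E$, and for an Eulerian poset this is a polynomial in $c=a+b$ and $d=ab+ba$; so the defining relations \eqref{dualrel} can be checked directly on $cd$-polynomials.

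First I would transport the problem into $\mathcal A=\mathbb Z\langle a,b\rangle$ via $\beta\mapsto \Psi_\beta\eqdef\sum_{W\in \two^n}\beta_W\mu_W$, so that $\beta_W$ is the coefficient of the $ab$-word $\mu_W$ in $\Psi_\beta$. Every element of $\mathscr B_n$ is a linear combination of $ab$-indices of Eulerian posets, hence $\Psi_\beta$ is then a polynomial in $c$ and $d$. The key step is to rewrite \eqref{dualrel} in this language. Write $E=E'E_k$ with $k=\ell(E)\geq 1$, and set $q=\mu_{E'}$ and $t=\mu_F$, so that $\mu_{\bar F}=\bar t$ (complementation of the $0$-$1$ word is the swap $a\leftrightarrow b$) and $\{\mu_{E_k},\mu_{1-E_k}\}=\{a,b\}$. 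Then $\beta_{EF}+\beta_{\check EF}$ is the sum of the coefficients of $q\,a\,t$ and $q\,b\,t$ in $\Psi_\beta$, while $\beta_{E\bar F}+\beta_{\check E\bar F}$ is the sum of the coefficients of $q\,a\,\bar t$ and $q\,b\,\bar t$; the case $\ell(E)=0$ just gives $\beta_F=\beta_{\bar F}$. Reading $c=a+b$, relation \eqref{dualrel} is therefore equivalent to the assertion that, for every $cd$-polynomial $\Psi$ and all $ab$-words $q,t$, the coefficient of $q\,c\,t$ in $\Psi$ equals the coefficient of $q\,c\,\bar t$.

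By linearity it suffices to prove this for a single $cd$-monomial $M=w_1\cdots w_m$ with $w_i\in\{c,d\}$. The conceptual engine is that each of $c$ and $d$ is invariant under the involution $a\leftrightarrow b$, so the $ab$-expansion of $M$ is symmetric: the coefficient of any $ab$-word equals that of its complement. I would then examine the tile $w_i$ covering the position at which $c$ is inserted. If it is a $c$, the coefficient of $q\,c\,t$ factors as (coefficient of $q$ in the prefix submonomial)$\,\times\,$(coefficient of $t$ in the suffix submonomial), and passing from $t$ to $\bar t$ changes nothing by the symmetry of the suffix submonomial. If it is a $d$, the inserted $c$ occupies one of the two cells of that $d$; a short check shows that, for each value of the adjacent cell, exactly one value of the free cell is compatible with $d\in\{ab,ba\}$, so the coefficient of $q\,c\,t$ is again a product of a prefix factor and a suffix factor in which the cell adjacent to $d$ plays no role, and the symmetry of the remaining submonomial closes the case. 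This establishes $\mathscr B_n\subseteq \mathscr B'_n$ and hence the proposition.

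The only genuine obstacle I anticipate is bookkeeping in the $d$-tile case: correctly identifying which of the two cells of the $d$ carries the inserted $c$ and verifying that the adjacent cell drops out of the count. Everything else is driven by the single observation that $c$ and $d$ are fixed by $a\leftrightarrow b$. If a poset-free argument were preferred, an essentially equivalent route is to show directly that the $f_{n+1}$ vectors $\beta^{(M)}_W=[\mu_W]M$, indexed by the $cd$-monomials $M$ of degree $n$, are linearly independent members of $\mathscr B'_n$; this needs only the standard fact that $cd$-monomials are linearly independent in $\mathcal A$, and could be substituted for the appeal to $\dim(\mathscr B_n)=f_{n+1}$.
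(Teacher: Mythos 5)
Your proof is correct, but your primary route is genuinely different from the paper's. The paper proves Proposition \ref{dimbn} by exhibiting an explicit injection from the space $\mathscr W_n$ of homogeneous $cd$-polynomials of degree $n$ (whose dimension is $f_{n+1}$ because the $cd$-monomials form a basis) into $\mathscr B'_n$, sending a $cd$-polynomial to its coefficient vector in the basis $\{\mu_E\}$; membership of these coefficient vectors in $\mathscr B'_n$ is checked by induction on the degree, splitting a basis monomial as $P'(a+b)$ or $P''(ab+ba)$. This is precisely your ``fallback'' route at the end, except that where the paper argues by induction, you verify the key claim --- that for any $cd$-polynomial the coefficient of $q\,c\,t$ equals that of $q\,c\,\bar t$ --- by a direct, non-inductive analysis of the tile ($c$ or $d$) covering the marked position, combined with the $a\leftrightarrow b$ symmetry of $cd$-monomials; this verification is sound (both the $c$-tile case and the two $d$-tile cases check out) and is arguably more transparent than the induction. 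Your primary route, however, deduces the inclusion $\mathscr B_n\subseteq\mathscr B'_n$ from the Bayer--Klapper theorem that the $ab$-index of an Eulerian poset is a $cd$-polynomial. That is logically valid --- Bayer--Klapper has independent proofs, so no circularity arises in proving the proposition itself --- but it buys the statement at the cost of an external theorem that the paper deliberately avoids: the paper remarks after Corollary \ref{cd} that Theorem \ref{h-relations} together with Corollary \ref{cd} furnish a \emph{new} proof of the existence of the $cd$-index of Eulerian posets, and that payoff evaporates (the ``new proof'' becomes circular) if Bayer--Klapper is consumed as input here. So if your argument is to sit inside the paper, the fallback formulation is the one to use: the $f_{n+1}$ coefficient vectors of the degree-$n$ $cd$-monomials are linearly independent elements of $\mathscr B'_n$, with membership verified by your tile/symmetry argument --- i.e., the paper's proof with your equally valid verification substituted for the induction.
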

\begin{proof}
The result follows if we show that there exists a vector space $\mathscr W_n$ of dimension $f_{n+1}$ and an injective linear map 
$\beta:\mathscr W_n \rightarrow \mathscr B'_n$. The vector space $\mathscr W_n$ is defined as follows. Let $\mathbb Q\langle a,b\rangle $ be the ring of polynomials with rational coefficients in two noncommuting variables $a,b$. 
Then $\mathscr W_n$ is the subspace of  $\mathbb Q\langle a,b\rangle $ consisting of all homogeneous polynomials of degree 
$n$ that can be expressed as polynomials in $a+b$ and $ab+ba$. It is known and not difficult to check that the monomials 
of the form $(a+b)^{m_0}(ab+ba)\cdots (a+b)^{m_{r-1}}(ab+ba)(a+b)^{m_r}$, with $m_0+\cdots+m_r+2r=n$, form a basis for $\mathscr W_n$. Since there are $f_{n+1}$ such monomials we have that $\dim \mathscr W_n=f_{n+1}$. The map $\beta$ is defined as follows. 
Recall that we let $\mu_0=a$, $\mu_1=b$ and for $E\in \two ^n$ we let $\mu_E=\mu_{E_1}\cdots \mu_{E_n}$. It is clear that $\{\mu_E:\, E\in \two ^n\}$ is a basis for 
the vector space 
of homogeneous polynomials in $\mathbb Q\langle a,b \rangle$ of degree $n$. Therefore, if $P\in \mathscr W_n$ we have $P=\sum_{E\in \twoindex ^n}\beta(P)_E \mu_E$ for some  $\beta(P)\in \mathscr V_n$. 
To prove that $\beta(P)\in \mathscr B_n'$ for all $P\in \mathscr W_n$ we can clearly assume that $P$ is one of the basis elements shown before. We proceed by induction on $\deg P$. If $\deg P=0$ the result is trivial. 
   If $\deg P>0$ then either $P=P'(a+b)$ for some $P'\in \mathscr W_{n-1}$ or $\deg P>1 $ and $P=P''(ab+ba)$ for some $P''\in \mathscr W_{n-2}$.  In the last case we have $\beta(P)_{T01}=\beta(P)_{T10}=\beta(P'')_T$ and $\beta(P)_{T00}=\beta(P)_{T11}=0$ for all $T\in \two^{n-2}$. We have to show that $\beta(P)$ satisfies the dual Bayer-Billera relations, i.e.
\begin{equation}\label{dure}
   \beta(P)_{EF}+\beta(P)_{\check EF}=\beta(P)_{E\bar F}+\beta(P)_{\check E \bar F},
\end{equation}
for all $E,F\in \two^*$ such that $EF\in \two^n$.
If $F=\epsilon$ Eq. \eqref{dure} is trivial. If $F\in \two^1$ we can clearly assume $E_{n-1}=F=0$. 
We have $\beta(P)_{EF}=\beta(P)_{\check E \bar F}=0$, and, if $E=G0$,  $\beta(P)_{\check E F}=\beta(P)_{G10}=\beta(P)_{G01}=\beta(P)_{E\bar F}$ and Eq. \eqref{dure} follows. 
So we can assume that $F\in \two ^m$ with $m\geq 2$. If $F_{m-1}=F_{m}$ Eq. \eqref{dure} is trivial and for symmetry reasons we can assume $F=G01$ for some $G\in \two ^{m-2}$. 
We have
\[
   \beta(P)_{EF}+\beta(P)_{\check EF}=\beta(P)_{EG01}+\beta(P)_{\check EG01}=\beta(P'')_{EG}+\beta(P'')_{\check E G},
\]
while
\[
   \beta(P)_{E\bar F}+\beta(P)_{\check E\bar F}=\beta(P)_{E\bar G10}+\beta(P)_{\check E\bar G10}=\beta(P'')_{E\bar G}+\beta(P'')_{\check E \bar G},
\]
and the result follows by the inductive hypothesis applied to $P''$. 

In the first case $\beta(P)_{E0}=\beta(P)_{E1}=\beta(P')_E$ for all $E\in \two^{n-1}$ and Eq. \eqref{dure} similarly follows from our inductive hypothesis on $P'$. 
The map $\iota$ is clearly injective, and the proof is complete. 
\end{proof}

As a corollary of our result we have the following characterization of $\mathbb Q\langle a+b, ab+ba \rangle$ as a subspace of $\mathbb Q\langle a, b \rangle$.
\begin{cor}
\label{cd}
Let $\beta\in \mathscr V_n$. Then the polynomial
\[
P(a,b)=\sum_{E\in {\twoindex}^n}\beta_E\mu_E\in \mathbb Z \langle a, b \rangle,
\]
can be expressed as a polynomial in $a+b,ab+ba$ if and only if $\beta$ satisfies Eq. \eqref{dualrel}.
\end{cor}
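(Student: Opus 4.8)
The plan is to recast the statement as an equality of two subspaces of $\mathscr V_n$ and then prove it by a dimension count, using the results already established in this section. Since $\{\mu_E:\, E\in \two^n\}$ is a basis of the space of homogeneous polynomials of degree $n$ in $\mathbb Q\langle a,b\rangle$, the assignment $\beta\mapsto \sum_{E\in \twoindex^n}\beta_E\mu_E$ is a linear bijection from $\mathscr V_n$ onto that space. Under this bijection the assertion that $P$ can be expressed as a polynomial in $a+b$ and $ab+ba$ is precisely the assertion that $P$ lies in $\mathscr W_n$, the span of the monomials $(a+b)^{m_0}(ab+ba)\cdots(a+b)^{m_r}$ recalled in Proposition \ref{dimbn}. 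Thus the corollary is equivalent to the statement that, transported to $\mathscr V_n$ via this bijection, $\mathscr W_n$ coincides with $\mathscr B'_n$; concretely, writing $\iota:\mathscr W_n\to \mathscr V_n$ for the map sending a polynomial $P$ to its coefficient vector $\beta(P)$, I must show $\iota(\mathscr W_n)=\mathscr B'_n$.

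For the direction where $P$ is expressible I would simply quote the computation inside the proof of Proposition \ref{dimbn}: there it is shown by induction on the degree that $\beta(P)$ satisfies the dual Bayer-Billera relations \eqref{dualrel} for every basis element $P$, and hence, by linearity, for every $P\in \mathscr W_n$. This already gives the inclusion $\iota(\mathscr W_n)\subseteq \mathscr B'_n$.

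For the converse the idea is that the two spaces have the same dimension, so the inclusion forces equality. The map $\iota$ is injective because distinct degree-$n$ polynomials have distinct coefficient vectors, whence $\dim\iota(\mathscr W_n)=\dim\mathscr W_n=f_{n+1}$, the last equality being the enumeration of the basis monomials noted in Proposition \ref{dimbn}. On the other hand, the transform $\beta\mapsto\alpha$, $\alpha_F=\sum_{E\leq F}\beta_E$, is a linear automorphism of $\mathscr V_n$, and Theorem \ref{h-relations} says exactly that it carries $\mathscr B'_n$ onto $\mathscr A_n$; together with the Bayer-Billera Theorem \ref{BayBil}, which gives $\dim\mathscr A_n=f_{n+1}$, this yields $\dim\mathscr B'_n=f_{n+1}$. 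Now $\iota(\mathscr W_n)$ and $\mathscr B'_n$ are subspaces of $\mathscr V_n$ of the same finite dimension $f_{n+1}$ related by inclusion, so $\iota(\mathscr W_n)=\mathscr B'_n$. Consequently, if $\beta$ satisfies \eqref{dualrel}, i.e. $\beta\in\mathscr B'_n$, then $\beta=\beta(P)$ for some $P\in\mathscr W_n$, which means the polynomial $\sum_E\beta_E\mu_E=P$ is a polynomial in $a+b$ and $ab+ba$, as required.

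I expect no genuine obstacle here, since the only nontrivial input is the dimension equality $\dim\mathscr B'_n=f_{n+1}$, and that is precisely what the combination of Propositions \ref{alphainan} and \ref{dimbn} (i.e. Theorem \ref{h-relations}) has already supplied. The one point that deserves care is to make the identification explicit: one must check that the coefficient-vector bijection $\mathscr V_n\cong(\text{degree-}n\text{ part of }\mathbb Q\langle a,b\rangle)$ is exactly the map that intertwines $\mathscr B'_n$ with $\mathscr W_n$, so that the abstract dimension count really delivers the concrete conclusion that $\beta\in\mathscr B'_n$ is equivalent to the associated polynomial being expressible in $a+b$ and $ab+ba$.
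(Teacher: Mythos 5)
Your proof is correct and is exactly the argument the paper leaves implicit when it states Corollary \ref{cd} as a consequence of Theorem \ref{h-relations}: the inclusion $\iota(\mathscr W_n)\subseteq \mathscr B'_n$ is the inductive computation inside Proposition \ref{dimbn}, and the reverse inclusion follows from the dimension equality $\dim\mathscr B'_n=f_{n+1}=\dim\mathscr W_n$, which Propositions \ref{alphainan} and \ref{dimbn} (equivalently, Theorem \ref{h-relations} together with Theorem \ref{BayBil}) already supply. No gaps; your identification of the coefficient-vector map with the map $\beta$ of Proposition \ref{dimbn} is the only point needing care, and you handle it correctly.
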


Note that Corollary \ref{cd}, together with Theorem \ref{h-relations}, give a different
proof of the existence of the $cd$-index of Eulerian posets (see \cite{BayKla}, or \cite[Theorem 3.17.1]{ECI}).

\section{A basis for the peak algebra}

In this section we define a family of quasisymmetric functions and show, using
the results in the previous one, that the ones that are nonzero are a basis for 
the peak subalgebra of the algebra of quasisymmetric functions. 
We also show how to expand any peak quasisymmetric function as a linear 
combination of elements of this basis. These results 
are used in the next section in the proof of our main result.

For $E \in \two ^{n-1}$ let 
$\partial(E) \eqdef \{ i \in [n-2] : E_{i} \neq E_{i+1} \} \cup \{ n-1 \}$.
Note that $\partial(E)=\{ x_1,\ldots,x_{r} \}_{<}$ if and only if the exponent
composition of $E$ is $(x_1,x_2-x_1,x_3-x_2,\ldots,x_{r}-x_{r-1})$.
Let $T \in \two ^{n-1}$, $S(T)=\{ s_{1},\ldots ,s_{t} \}_{<}$, 
$s_{0} \eqdef 0$, $s_{t+1} \eqdef n$. We let  ${\mathcal G} (T)$ be the
set of all $E=\two ^{n-1}$ such that  
\begin{description}
\item[i)] $\partial (E) \cap (s_{j},s_{j+1})\neq \emptyset$ for all $j \in [0,t-1]$;
\item[ii)] if $x,y\in \partial (E) \cap (s_{j},s_{j+1})$ then $x\equiv y \pmod 2$ for all $j \in [0,t]$;
\end{description}
Given such an $E$ we define $sgn (E,T) \eqdef (-1) ^{\sum_{j=0}^{t-1}
(s_{j+1}-y_{j}-1)}$ where $y_j$ is any element of 
$\partial (E) \cap (s_{j},s_{j+1})$ for $j \in [0,t-1]$, 
and we let
\[ 
D _{T} \eqdef \sum_{E \in {\mathcal G}(T)} sgn(E,T) \, L_{E}\in \mathcal Q . 
\]
So, for example, if $T=00100$ then ${\mathcal G}(T) = \{ 01111, 01100, 00111, 00100, 
10000,$ $10011, 11000,$ $11011 \}$ and $D_T =-L_{01111}-L_{01100}+L_{00100}-L_{10000}-L_{10011}+L_{00111}+L_{11000}+L_{11011}$.
Note that $D_T$ is homogeneous of degree $\ell(T)+1$ and
that ${\mathcal G} (T) = \emptyset$, and hence $D _{T} =0$,  if $T$
is not sparse.
Given $E,T \in \two ^{n-1}$ we let
\[ 
h_{E,T} \eqdef [L_{E}](D _{T}), 
\]
so, by our definitions,
\begin{equation}
h_{E,\, T} = 
\left\{ \begin{array}{ll}
sgn(E,T), & \mbox{if $E \in {\mathcal G}(T)$,} \\
0, & \mbox{otherwise.}
\end{array}
\right. 
\end{equation}
Note that, since $ h_{E,T}$ depends only on $\partial (E) \setminus T$,
given $S \subseteq [n-1]$ we will sometimes write $h_{S,T}$
rather than $h_{E,T}$ if $\partial (E)=S$.

The next property is crucial in the proof of the main result of
this section.
\begin{prop}
\label{peakrel}
Let $E,T \in \two^{n-1}$, and $i \in [2,n-2]$ be such that
$i-1,i \not \in \partial (E)$. Then 
\begin{equation}
\label{peak1}
h_{\partial (E),\, T}+h_{\partial (E) \cup \{ i-1,i \},\, T}=
h_{\partial (E) \cup \{ i \}, \, T} +h_{\partial (E) \cup \{ i-1 \},\, T}. 
\end{equation}
\end{prop}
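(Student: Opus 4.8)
The plan is to read $h_{\partial(E),T}$ (for a subset $S=\partial(E)$) as a product of contributions, one per open interval cut out by $S(T)=\{s_1,\ldots,s_t\}_<$, and then to verify \eqref{peak1} one interval at a time. Write $s_0\eqdef 0$, $s_{t+1}\eqdef n$ and set $I_j\eqdef (s_j,s_{j+1})$ for $j\in[0,t]$. Since conditions i) and ii) and the exponent defining $sgn(E,T)$ all decompose over the $I_j$, and since (as noted before the statement) $h_{\cdot,T}$ depends only on $S\setminus S(T)$, I would first record the factorization
\[
h_{S,T}=\prod_{j=0}^{t} g_j(S\cap I_j),
\]
where, for a set $A$ of integers contained in $I_j$: if $j<t$ then $g_j(A)=(-1)^{s_{j+1}-y-1}$ ($y$ any element of $A$, the value being well defined by ii)) when $A\neq\emptyset$ and the elements of $A$ are mutually congruent modulo $2$, and $g_j(A)=0$ otherwise; while $g_t(A)=1$ when the elements of $A$ are mutually congruent modulo $2$ (including $A=\emptyset$) and $g_t(A)=0$ otherwise. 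Note $g_t$ carries no sign and imposes no nonemptiness, reflecting that the last interval is exempt from i) and from the sign sum $\sum_{j=0}^{t-1}$.

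Next I would split according to the location of $i-1$ and $i$. If $i-1\in S(T)$ or $i\in S(T)$, then adding that index to $S$ does not change $S\setminus S(T)$ and hence leaves $h$ unchanged; this makes two of the four terms in \eqref{peak1} coincide with the other two pairwise, so the identity is immediate. Otherwise $i-1,i\notin S(T)$; since $i-1\notin S(T)$ it lies in some $I_j$, and then $i=(i-1)+1$ lies in the same $I_j$ (were it in the next interval it would equal $s_{j+1}\in S(T)$). The four sets $S$, $S\cup\{i\}$, $S\cup\{i-1\}$, $S\cup\{i-1,i\}$ then agree outside $I_j$, so factoring out the common product $C\eqdef\prod_{j'\neq j}g_{j'}(S\cap I_{j'})$ reduces \eqref{peak1} to the single-interval identity
\[
g_j(A)+g_j(A\cup\{i-1,i\})=g_j(A\cup\{i\})+g_j(A\cup\{i-1\}),\qquad A\eqdef S\cap I_j.
\]

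To finish I would check this local identity. As $i-1$ and $i$ have opposite parities, $A\cup\{i-1,i\}$ contains both parities, so $g_j(A\cup\{i-1,i\})=0$ and it suffices to prove $g_j(A\cup\{i\})+g_j(A\cup\{i-1\})=g_j(A)$. If $A=\emptyset$ (possible only for $j<t$) then $g_j(A)=0$, while $g_j(\{i\})$ and $g_j(\{i-1\})$ equal $(-1)^{s_{j+1}-i-1}$ and $(-1)^{s_{j+1}-i}$, which are opposite in sign and cancel. If $A\neq\emptyset$, then either $A$ already has mixed parity, whence all three of $g_j(A),g_j(A\cup\{i\}),g_j(A\cup\{i-1\})$ vanish, or $A$ has a single parity $p$, in which case exactly one of $i,i-1$ is congruent to $p$ modulo $2$: adding it preserves the common parity and, for $j<t$, the value $(-1)^{s_{j+1}-y-1}$ (the new element being congruent to $y$), so it contributes $g_j(A)$, whereas adding the other creates mixed parity and contributes $0$. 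In every case the identity holds.

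The one point needing care — and the crux — is the last interval $I_t$, which has no sign and for which i) is not imposed, so the $A=\emptyset$ branch would break the cancellation if it could arise there. What rescues the argument is that $n-1\in\partial(E)=S$ always, and $n-1\in I_t$ precisely when $I_t$ receives the indices $i-1,i$ (since $i\le n-2$ forces $s_t<n-1$); hence $A=S\cap I_t$ necessarily contains $n-1$ and is never empty, so only the nonempty branch is ever needed when $j=t$. Keeping track of this, together with the invisibility of $S(T)$ to $h$, is the delicate part; everything else is the routine parity bookkeeping indicated above.
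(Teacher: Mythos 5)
Your proof is correct and takes essentially the same approach as the paper's: after disposing of the case $i-1 \in S(T)$ or $i \in S(T)$ exactly as the paper does, your interval-by-interval factorization merely localizes the identity to the interval $(s_j,s_{j+1})$ containing $i-1,i$, and the three branches of your local check (empty, mixed-parity, single-parity intersection) are precisely the paper's cases $\partial(E)\cap(s_j,s_{j+1})=\emptyset$, failure of membership in ${\mathcal G}(T)$ by a parity violation, and $E \in {\mathcal G}(T)$, with the same sign computations (the cancellation $(-1)^{s_{j+1}-i-1}+(-1)^{s_{j+1}-i}=0$ and the fact that exactly one of the two singleton additions preserves the sign). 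Your explicit product formula for $h_{S,T}$ and the observation that $n-1\in\partial(E)$ keeps the last interval nonempty are clean packaging of locality facts the paper uses implicitly, not a different argument.
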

{\bf Proof:} 
We may clearly assume that $T$ is sparse. 
If $i \in T$ then $\partial (E) \setminus T = 
(\partial (E) \cup \{ i \}) \setminus T$ and 
$(\partial (E) \cup \{ i-1 \}) \setminus T = 
(\partial (E) \cup \{ i-1,i \}) \setminus T$
so (\ref{peak1}) clearly holds. Similarly if $i-1 \in T$.
We may therefore assume that $i, i-1 \notin T$. 
Suppose first that $E \in {\mathcal G}(T)$. Then 
$\partial (E) \cup \{ i-1,i \} \notin {\mathcal G}(T)$ while exactly
one of $\partial (E) \cup \{i-1 \}$, $\partial (E) \cup \{ i \}$
is in ${\mathcal G}(T)$, and it is easy to see that it has the same
sign as $E$, so (\ref{peak1}) holds. Suppose now that $E \notin {\mathcal G}(T)$. 
Then either there is $j \in [0,t-1]$ such that $\partial (E) \cap (s_{j},s_{j+1})
= \emptyset$ or there exists $j \in [0,t]$ such that 
$\partial (E) \cap (s_{j},s_{j+1})=\{ x_{1}, \ldots , x_{p}\} _{<}$
and there exists $r \in [2,p]$ such that $x_{r} - x_{r-1} \equiv 1 \pmod{2}$.
If either $i<s_{j}$ or $s_{j+1}<i-1$ then $\partial (E) \cup \{i-1 \},
\partial (E) \cup \{ i \}, \partial (E) \cup \{ i-1,i \} \notin {\mathcal G}(T)$ 
so (\ref{peak1})
holds. So assume $s_{j}<i-1<i<s_{j+1}$. Suppose first that $\partial (E)
\cap (s_{j},s_{j+1}) = \emptyset$ for some $j \in [0,t-1]$. Then $\partial (E) \cup \{ i-1,i \} \notin {\mathcal G}(T)$ while either both or none of
$\partial (E) \cup \{ i-1 \}$, $\partial (E) \cup \{ i \}$ are in ${\mathcal G}(T)$ and in the
first case $sgn (\partial (E) \cup \{ i-1 \} , T)=-sgn (\partial (E) \cup \{ i \} , T)$
so (\ref{peak1}) holds. Suppose now that 
$\partial (E) \cap (s_{j},s_{j+1}) = \{ x_{1},
\dots , x_{p} \} _{<}$ for some $j \in [0,t]$ and there exists $r \in [2,p]$ 
such that $x_{r}-x_{r-1} \equiv 1 \pmod{2}$. 
Then $\partial (E) \cup \{ i-1,i \}, \partial (E)
\cup \{ i-1 \}, \partial (E) \cup \{ i\} \notin {\mathcal G}(T)$ and 
(\ref{peak1}) holds. 
$\Box$

We can now prove the first main result of this section, namely that the quasisymmetric functions $D_T$ are in the peak
subalgebra of the algebra of quasisymmetric functions.
\begin{thm}
 Let $T \in \two^{n-1}$. Then $D _{T} \in \Pi _n$.
 \end{thm}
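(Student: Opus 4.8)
The plan is to read off the coefficients of $D_T$ and feed them into the characterization of the peak algebra given in Theorem \ref{dualBB}. Writing $D_T=\sum_E h_{E,T}\,L_E$, so that $\beta_E=h_{E,T}$, and noting that $D_T$ is homogeneous of degree $\ell(T)+1=n$ (hence $D_T\in\Pi_n$ as soon as $D_T\in\Pi$), Theorem \ref{dualBB} reduces the claim to verifying the dual Bayer--Billera relations
\[
h_{AB,\,T}+h_{\check A B,\,T}=h_{A\bar B,\,T}+h_{\check A\bar B,\,T}
\]
for all words $A,B\in\two^*$ (I rename the two free words $A,B$ to avoid clashing with the fixed $E$ in $h_{E,T}$ and in Proposition \ref{peakrel}). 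Since $h_{\,\cdot\,,T}$ is supported on $\two^{n-1}$, all four terms vanish unless $AB\in\two^{n-1}$, so I may assume $AB=G\in\two^{n-1}$ and set $k=\ell(A)$ and $S=\partial(G)$.

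The key step is the bookkeeping for how $\partial$ changes under the three modifications: $\check A B$ flips bit $k$ of $G$, $A\bar B$ flips bits $k+1,\dots,n-1$, and $\check A\bar B$ flips bits $k,\dots,n-1$. Recalling that $i\in[n-2]$ belongs to $\partial$ exactly when $G_i\neq G_{i+1}$ and that $n-1$ always belongs to $\partial$, a direct inspection of the affected adjacencies gives, for $2\le k\le n-2$,
\[
\partial(\check A B)=S\,\triangle\,\{k-1,k\},\qquad
\partial(A\bar B)=S\,\triangle\,\{k\},\qquad
\partial(\check A\bar B)=S\,\triangle\,\{k-1\},
\]
where $\triangle$ denotes symmetric difference. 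Because $h_{\,\cdot\,,T}$ depends only on $\partial$, the relation to be proved becomes the ``square'' identity
\[
h_{S,T}+h_{S\triangle\{k-1,k\},T}=h_{S\triangle\{k\},T}+h_{S\triangle\{k-1\},T}.
\]

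Setting $S'=S\setminus\{k-1,k\}$, the four sets that occur are precisely the four sets $S'\cup X$ with $X\subseteq\{k-1,k\}$, and a short case check on whether $k-1,k$ already lie in $S$ shows that, up to possibly exchanging the two sides, the square identity is always the single instance
\[
h_{S',T}+h_{S'\cup\{k-1,k\},T}=h_{S'\cup\{k\},T}+h_{S'\cup\{k-1\},T}
\]
of Proposition \ref{peakrel}, applied with $i=k\in[2,n-2]$ and base set $S'$, which by construction avoids $k-1$ and $k$. This is where the real content sits, and it is already supplied by Proposition \ref{peakrel}; the only genuine work on the present statement is the adjacency bookkeeping above together with the observation that the toggle-square is symmetric enough to collapse, in every case, to that one instance.

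Finally I would dispose of the boundary values of $k$, all of which are immediate. For $k=n-1$ we have $B=\bar B=\varepsilon$, so $A\bar B=AB$ and $\check A\bar B=\check A B$ and the relation is a tautology (this also absorbs the degenerate small-$n$ cases, where $B$ is forced to be empty). For $k=1$ (with $n\ge 3$) flipping bit $1$ and flipping bits $2,\dots,n-1$ each toggle only the single adjacency at position $1$, so $\partial(\check A B)=\partial(A\bar B)=S\triangle\{1\}$ while $\partial(\check A\bar B)=\partial(\bar G)=S$, and again both sides agree. For $k=0$ we have $A=\check A=\varepsilon$, so the relation reduces to $h_{G,T}=h_{\bar G,T}$, which holds because complementation preserves every adjacency, i.e. $\partial(\bar G)=\partial(G)$. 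Combining the interior case with these boundary tautologies establishes all dual Bayer--Billera relations, and hence $D_T\in\Pi_n$ by Theorem \ref{dualBB}.
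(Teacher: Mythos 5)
Your proof is correct and follows essentially the same route as the paper: both verify the dual Bayer--Billera relations of Theorem \ref{dualBB} for the coefficients $h_{E,T}$ by tracking how $\partial$ changes under flipping the last prefix bit and complementing the suffix, and then reduce the resulting ``square'' identity to Proposition \ref{peakrel}. The only (cosmetic) difference is that you record the four $\partial$-sets via symmetric differences and treat the boundary cases $k=0,1,n-1$ explicitly, whereas the paper lists the sets elementwise and leaves the easy boundary cases implicit.
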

 {\bf Proof.} 
Note first that, since
$h_{E,T}$ depends only on $\partial (E) \setminus T$, $h_{E,T}=h_{\bar{E},T}$ 
for all $E \in \two^{n-1}$.
Now let $i \in [2,n-2]$, $A \in \two^{i-1}$, and $E \in \two^{n-1-i}$.
We claim that then
\begin{equation}
\label{peak2}
h_{A0E,\, T}+h_{A1E, \, T}=h_{A0\bar{E}, \, T}+h_{A1\bar{E}, \, T}.
\end{equation}
In fact, we may clearly assume that $E_{1} =0$. Let 
$\{ x_{1}, \ldots , x_{r} \} _{<} \eqdef \partial (A)$ and 
$\{ y_{1} , \ldots , y_{k} \} _{<} \eqdef \partial (E)$ (so $x_{r}=i-1$ and $y_{k}=n-1-i$). If $A_{i-1} =0$, 
then $\partial (A0E) = \{ x_{1}, \ldots ,x_{r-1},y_{1}+i,\ldots ,y_{k}+i \}$, $\partial (A1E)=
 \{ x_{1} , \ldots ,x_{r},i,y_{1}+i, \ldots , y_{k}+i  \}$, $\partial (A0 \bar{E}
 ) = \{ x_{1}, \ldots ,x_{r-1},i,y_{1}+i,\ldots , y_{k}+i \}$,
 and $\partial (A1\bar{E})=\{ x_{1}, \ldots , x_{r}, y_{1}+i,\ldots y_{k}+i \}$ so (\ref{peak2}) follows from Proposition \ref{peakrel}. 
 Similarly, if $A_{i-1} = 1$ then we have that $\partial
 (A1\bar{E}) = \{ x_{1}, \ldots , x_{r-1},y_{1}+i,\ldots , y_{k}+i \}$, $\partial
 (A0\bar{E})=\partial (A1\bar{E}) \cup \{ x_{r},i \}$, $\partial (A0E)=\partial (A1\bar{E})
 \cup \{ x_{r} \}$, and $\partial (A1E)=\partial (A1\bar{E}) \cup \{ i\}$ 
 and (\ref{peak2}) again follows from Proposition \ref{peakrel}. 
This shows that the function $\beta$ given by $\beta_E=h_{E,T}$ belongs to $\mathscr B_{n-1}$. 
This implies the result, by Theorems \ref{BBrel} and \ref{h-relations}.
$\Box$
 
 Let $E \in \two^{n-1}$, $\{ y_{1}, \ldots y_{k} \} _{<} \eqdef \partial (E)$.
 Note that $E$ is sparse if and only if $E_{1}=0$ and 
\begin{equation}
\label{sparsef}
y_{2i}=y_{2i-1}+1
\end{equation}
 for all $1\leq i \leq \left\lfloor \frac{k}{2} \right\rfloor$.
 
The following is the main result of this section. 

\begin{thm} 
\label{basis}
The set $\{ D _{T} :\, T \in \two_s^{\ast} \}\cup\{1\}$ is a basis of $\Pi$. Furthermore, if 
$F = \sum_{E \in \twoindex ^{\ast}} h_E L_E \in \Pi$, then 
$F = \sum_{ E \in \twoindex^{\ast}_s } h_E D_E$.
\end{thm}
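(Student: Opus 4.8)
The plan is to reduce everything to a single unitriangularity statement, which I will isolate as the key lemma: \emph{for $E,T\in\two^{n-1}$ both sparse one has $h_{E,T}=\delta_{E,T}$}; equivalently, the only sparse element of $\mathcal G(T)$ is $T$ itself, and $sgn(T,T)=1$. Granting this, the rest is soft. Write $d$ for the number of sparse words in $\two^{n-1}$ and let $\pi\colon\mathcal Q_n\to\mathbb Q^{\,d}$ be the coordinate projection $\sum_E h_EL_E\mapsto(h_E)_{E\text{ sparse}}$. Since $D_T=\sum_E h_{E,T}L_E$, the key lemma says precisely that $\pi(D_T)$ is the standard basis vector indexed by $T$, for every sparse $T$. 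Hence the family $\{D_T:\,T\in\two^{n-1}\text{ sparse}\}$, which lies in $\Pi_n$ by the theorem just proved, is linearly independent, and $\pi$ restricts to a surjection of $\Pi_n$ onto $\mathbb Q^{\,d}$.

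To upgrade this to a basis I would match dimensions. By Theorem \ref{dualBB} the map $F=\sum_E h_EL_E\mapsto(h_E)_{E\in\two^{n-1}}$ identifies $\Pi_n$ with the space $\mathscr B'_{n-1}$ cut out by the dual Bayer--Billera relations, and by Theorem \ref{h-relations} together with Theorem \ref{BayBil} this space has dimension $f_{n}$. On the other hand a sparse word of length $m$ is either empty, or begins with $0$ followed by a sparse word of length $m-1$, or begins with $01$ followed by a sparse word of length $m-2$; so their number obeys the Fibonacci recursion and equals $f_{m+1}$, giving $d=f_n$. As the $f_n$ independent vectors $D_T$ sit in the $f_n$-dimensional space $\Pi_n$, they form a basis and $\pi|_{\Pi_n}$ is an isomorphism; ranging over $n$ and adjoining $1\in\Pi_0$ yields the asserted basis of $\Pi$. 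The expansion formula is then immediate in each degree: if $F=\sum_E h_EL_E\in\Pi_n$ then $\pi(F)=(h_E)_{E\text{ sparse}}=\pi\bigl(\sum_{T\text{ sparse}}h_TD_T\bigr)$ by the key lemma, and injectivity of $\pi|_{\Pi_n}$ forces $F=\sum_{T\text{ sparse}}h_TD_T$; the general (inhomogeneous) case follows by applying this to each graded component.

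It remains to prove the key lemma, which is where the real work lies. The diagonal containment $T\in\mathcal G(T)$ with $sgn(T,T)=1$ is a direct computation: for sparse $T$ the set $\partial(T)$ is, by \eqref{sparsef}, a disjoint union of consecutive pairs $\{s-1,s\}$, one for each $1$ of $T$ at position $s\in S(T)$, together with the forced element $n-1$, and one checks that for $j<t$ the open interval $(s_j,s_{j+1})$ meets $\partial(T)$ exactly in $\{s_{j+1}-1\}$ while $(s_t,n)$ meets it in a singleton; both defining conditions of $\mathcal G(T)$ hold, and since the distinguished element of $(s_j,s_{j+1})$ is $s_{j+1}-1$ every exponent $s_{j+1}-y_j-1$ vanishes, giving sign $+1$. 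The off-diagonal vanishing---that a sparse $E\in\mathcal G(T)$ forces $E=T$---is the main obstacle. Here I would again use that, for sparse $E$, the set $\partial(E)$ splits into consecutive pairs $\{y,y+1\}$ (plus possibly a trailing $n-1$): condition ii) of $\mathcal G(T)$ cannot tolerate both members of such a pair inside one open interval $(s_j,s_{j+1})$, since they have opposite parity, so each pair must be separated by a point of $S(T)$, i.e. $y\in S(T)$ or $y+1\in S(T)$. Feeding this back into condition i), which demands that each of the intervals $(s_j,s_{j+1})$ with $j<t$ be met, together with the sparseness of $E$, should pin down $S(E)=S(T)$ and hence $E=T$. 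Carrying out this parity-and-counting bookkeeping cleanly is the one genuinely delicate step; everything else in the argument is formal.
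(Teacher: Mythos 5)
Your proposal has the same skeleton as the paper's proof: your ``key lemma'' is exactly the paper's claim (\ref{delta}) that $h_{E,T}=\delta_{E,T}$ for sparse $E,T$, and the paper likewise concludes by linear independence plus the dimension count $\dim(\Pi_n)=|\two_s^{n-1}|=f_n$. Your soft reductions are all correct and indeed slightly more explicit than the paper's: the projection $\pi$ onto sparse coordinates, the Fibonacci recursion counting sparse words, and the derivation of the expansion formula $F=\sum_{T}h_TD_T$ from injectivity of $\pi|_{\Pi_n}$ (the paper leaves this last point implicit). Your verification of the diagonal part, $T\in\mathcal G(T)$ with $sgn(T,T)=1$, also matches the paper's.

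However, the step you explicitly leave unfinished --- that a sparse $E\in\mathcal G(T)$ forces $E=T$ --- is not routine bookkeeping; it is the entire mathematical content of the paper's proof, so as written the proposal has a genuine gap at its only load-bearing point. The good news is that your parity observation is the right start and does close up. Since $T$ is sparse, $S(T)$ contains no two consecutive integers, so each pair $\{y_{2i-1},y_{2i}\}$ of $\partial(E)$ coming from (\ref{sparsef}) contains \emph{exactly} one element of $S(T)$: at least one by your parity argument, at most one by sparseness of $T$. As the pairs are disjoint, their number $p$ satisfies $p\leq t$. On the other hand, the only elements of $\partial(E)$ that can meet the $t$ disjoint intervals $(s_j,s_{j+1})$, $0\leq j\leq t-1$, demanded by condition i) are the $p$ non-$S(T)$ members of the pairs: the leftover singleton, if $|\partial(E)|$ is odd, is $y_k=n-1$ by (\ref{sparsef}), which lies either in $(s_t,n)$ or in $S(T)$, hence in no required interval. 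Thus $p\geq t$, so $p=t$, every $s_j$ lies in a pair, and each required interval contains exactly one non-$S(T)$ pair member. Now induct from the left: the element of $\partial(E)\cap(0,s_1)$ has its $S(T)$-partner equal to $s_1$ (a partner $x-1\in S(T)$ below $s_1$ is impossible), so the first pair is $\{s_1-1,s_1\}$; given that the $j$-th pair is $\{s_j-1,s_j\}$, the element $x$ of $\partial(E)\cap(s_j,s_{j+1})$ cannot have partner $s_j$ (that element is already paired), so its partner is $s_{j+1}$ and $x=s_{j+1}-1$. Hence $\partial(E)=\partial(T)$, and since a sparse word begins with $0$ and is determined by its $\partial$-set, $E=T$. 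This counting argument is, in compressed form, exactly the induction the paper carries out (the claim that $s_j=y_{2j}$ and $\partial(E)\cap(s_{j-1},s_j)=\{s_j-1\}$ for all $j\in[t]$); until it is carried out, the theorem is not proved.
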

{\bf Proof.} Let $T \in \two ^{n-1}_s$, $T =\{ s_{1},\ldots ,s_{t} \}_{<}$, $s_{0} \eqdef 0$, $s_{t+1} \eqdef n$. We claim that
\begin{equation}
\label{delta}
 h_{E,T} =\delta _{E,T}. 
\end{equation}
for all $E \in \two ^{n-1}_s$.
Note first that $\partial (T) \cap (s_{j},s_{j+1})=\{ s_{j+1}-1 \}$
for all $j \in [0,t-1]$, while $|\partial (T) \cap (s_{t},s_{t+1})| \leq 1$. 
Hence $T \in {\mathcal G}(T)$ and $sgn (T,T)=1$ so $h_{T, T}=1$.
Suppose now that $E \in {\mathcal G}(T)$, $E$ sparse, 
$\{ y_{1}, \ldots ,y_{k} \} _{<} \eqdef \partial (E)$. We claim that $\partial (E)
\cap (s_{j-1},s_{j}) =\{ s_{j}-1 \}$, and that $s_{j}=y_{2j}$ for all $j \in [t]$.
 In fact, since, by (\ref{sparsef}), $y_{2}-y_{1}=1$, this is clear if $j=1$. Suppose that it is true
for some $j \in [t-1]$. Then $s_{j}=y_{2j}$. Furthermore, 
$|\partial (E) \cap (s_{j},s_{j+1})|=1$ (for
if $|\partial (E) \cap (s_{j},s_{j+1})| \geq 2$ then 
$y_{2j+1},y_{2j+2} \in \partial (E) \cap (s_{j},s_{j+1})$ which, by
 (\ref{sparsef}), contradicts the fact that $E \in {\mathcal G}(T)$). 
Hence $s_{j}<y_{2j+1}<s_{j+1} \leq y_{2j+2}$
which, by (\ref{sparsef}), implies that $y_{2j+1}=s_{j+1}-1=y_{2j+2}-1$. 
If $\partial (E) \cap (s_{t},s_{t+1}) = \emptyset $ then $s_t = n-1$, so $k=2t$ and
$\partial(E)= \{ s_1 -1, s_1, \ldots, s_t -1, s_t \}$ so $E=T$. If 
$\partial (E) \cap (s_{t},s_{t+1}) \neq \emptyset$ then, since $s_t = y_{2t}$,
$ y_{2t+1} \in \partial (T) \cap (s_{t},s_{t+1})$. But, by (\ref{sparsef}), this implies that 
$y_{2t+1} = n-1$. Hence $k=2t+1$ and
$\partial(E)= \{ s_1 -1, s_1, \ldots, s_t -1, s_t, n-1 \}$ which again implies
that $E=T$.
This shows that 
$\mathcal G(T)\cap \two^*_s = \{ T \}$ 
and hence proves (\ref{delta}). 
Therefore $\{ D _{T}:\, T\in \two^{n-1}_s \}$ 
is a linearly independent set and this, since 
$\dim(\Pi _n) = | \two^{n-1}_s|$, 
proves the result.
$\Box$

\section{Kazhdan-Lusztig polynomials}
In this section, using the results in the two previous ones, we prove 
a nonrecursive combinatorial formula for the Kazhdan-Lusztig polynomials which holds in
complete generality, and which is simpler and more explicit than any existing one.

Let $n\in \PP$, $T\in \two^{n-1}_s$, $S(T) \eqdef \{ s_1,\ldots,s_t \}_{<}$, $s_0 \eqdef 0$,
$s_{t+1} \eqdef n$. We say that a lattice path $\Gamma$ is a {\em $T$-slalom} 
(the reader may want to consult Figure \ref{slalomfield} (top) for an illustration where $n$ is odd, and Figure \ref{slalomfield} (bottom) for an illustration where $n$ is even) if and only if
\begin{itemize}
\item $\ell(\Gamma)=n$;
\item $\Gamma(s_i +1)\neq 0$ for all $i\in[t]$ (i.e. $\Gamma$ does not passes through the ``stars'' in the examples in Figure \ref{slalomfield});
\item $\Gamma$ crosses the segment  $\{y=-\frac{1}{2}, x\in[s_{i-1}+1,s_{i}]\}$ (the dotted segments in Figure \ref{slalomfield}) exactly once for all $i\in [t]$;
\item $\Gamma(x)\geq \chi_{\textrm{even}}(n)$ for all $x>s_t +1$ (i.e. the path $\Gamma$ remains above the solid segment in Figure \ref{slalomfield}).
\end{itemize}
We denote by $\mathcal{SL}(T)$ the set of $T$-slaloms.
For $T\in\two^{n-1}_s$ we let
\[
 \Omega_T(q)\eqdef (-1)^{s_1+\cdots+s_t +t}\sum_{\Gamma\in \mathcal {SL}(T)}(-q)^{d_-(\Gamma)},
\]
where $d_-(\Gamma)=n-d_+(\Gamma)$ is the number of down-steps of $\Gamma$. 
For example, if $T=00100$  there are exactly three paths in $\mathcal{SL}(T)$ (see Figure \ref{slalompaths}) and $ \Omega_{00100}(q)=-q+2q^2$.

We can now state the main result of this section.

\begin{thm}\label{finalmain}
 Let $(W,S)$ be a Coxeter system, $u,v\in W$, $u\leq v$ and $\ell=\ell(v)-\ell(u)$. Then
 \[
  P_{u,v}(q) = \sum_{T\in \twoindex^*_s}b(u,v)_T \, q^{\frac{\ell-\ell(T)-1}{2}} \, \Omega_T(q).
\]
\end{thm}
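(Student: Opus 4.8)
The plan is to reduce the statement to a single, $W$-independent computation of the Kazhdan--Lusztig transform $\mathcal K(D_T)$, by combining the basis expansion of Theorem \ref{basis} with the transform of Theorem \ref{FtoKL}. We may assume $u<v$, since $P_{u,u}=1$ is trivial. By Theorem \ref{FinPi} we have $\widetilde F(u,v)\in\Pi$, and by definition its coefficient on $L_E$ is $b(u,v)_E$. Theorem \ref{basis} then applies directly and yields the clean expansion
\[
\widetilde F(u,v)=\sum_{T\in\two^*_s}b(u,v)_T\,D_T.
\]
Applying the linear map $\mathcal K$ and substituting the left-hand side from Theorem \ref{FtoKL} gives
\[
q^{-\ell/2}P_{u,v}(q)-q^{\ell/2}P_{u,v}(1/q)=\sum_{T\in\two^*_s}b(u,v)_T\,\mathcal K(D_T),
\]
so that the entire problem is now localized in the single Laurent polynomial $\mathcal K(D_T)$, which depends neither on $u,v$ nor on $W$.

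The heart of the argument is a lemma computing $\mathcal K(D_T)$ purely combinatorially. Writing $n=\ell(T)+1$ and using that every $E\in\mathcal G(T)$ has length $n-1$, so that $\mathcal K(L_E)=q^{-n/2}\Upsilon_E$, I obtain
\[
\mathcal K(D_T)=q^{-n/2}\sum_{E\in\mathcal G(T)}sgn(E,T)\,\Upsilon_E(q).
\]
Expanding each $\Upsilon_E$ as a signed generating function over the lattice paths $\Gamma$ with $N(\Gamma)=E$, I would reorganize this double sum into a single signed sum over all lattice paths $\Gamma$ of length $n$ whose descent string $N(\Gamma)$ lies in $\mathcal G(T)$, each weighted by $sgn(N(\Gamma),T)\,(-1)^{m_0(N(\Gamma))}(-q)^{d_+(\Gamma)}$. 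The claim to establish is that, after collecting these signs, the result is exactly the antisymmetrization of the $T$-slalom sum, namely
\[
\mathcal K(D_T)(q)=q^{-n/2}\,\Omega_T(q)-q^{n/2}\,\Omega_T(1/q).
\]
The passage from the descent-string conditions defining $\mathcal G(T)$ and the statistic $d_+$ to the slalom conditions (avoiding the ``stars'', crossing each dotted segment once, staying above the solid segment) and the statistic $d_-=n-d_+$ is precisely what this lemma must reconcile.

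Finally, I would extract $P_{u,v}$. Since $\Omega_T$ is the low-degree half of the anti-palindromic polynomial $\sum_{E\in\mathcal G(T)}sgn(E,T)\Upsilon_E$ of degree $n$, the slalom constraints force $2\deg\Omega_T<n_T$, where $n_T=\ell(T)+1$; hence each summand $q^{-n_T/2}\Omega_T(q)$ involves only strictly negative powers of $q$, while $q^{n_T/2}\Omega_T(1/q)$ involves only strictly positive ones. The same split holds on the left of the displayed identity, where $P_{u,v}$ has degree $<\ell/2$ and constant term $1$. Equating the strictly negative parts of the two sides and then multiplying through by $q^{\ell/2}$ gives
\[
P_{u,v}(q)=\sum_{T\in\two^*_s}b(u,v)_T\,q^{(\ell-n_T)/2}\,\Omega_T(q)=\sum_{T\in\two^*_s}b(u,v)_T\,q^{\frac{\ell-\ell(T)-1}{2}}\,\Omega_T(q),
\]
as required; only the finitely many $T$ with $b(u,v)_T\neq 0$ contribute, which is what bounds the number of summands by $f_\ell$.

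The step I expect to be the genuine obstacle is the combinatorial lemma of the second paragraph, that is, the identity $\sum_{E\in\mathcal G(T)}sgn(E,T)\Upsilon_E=\Omega_T(q)-q^n\Omega_T(1/q)$. Here one must design a sign-reversing involution (or an equivalent telescoping argument) on the lattice paths contributing to the low-degree half, cancelling everything except the $T$-slaloms, and then check that the residual weights $sgn(N(\Gamma),T)\,(-1)^{m_0(N(\Gamma))}$ assemble into the global prefactor $(-1)^{s_1+\cdots+s_t+t}$ of $\Omega_T$. Getting this sign bookkeeping to close, uniformly in the parity of $n$ (which is where the $\chi_{\mathrm{even}}(n)$ threshold in the slalom definition intervenes), is the delicate point.
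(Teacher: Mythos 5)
Your reduction is exactly the one the paper uses: after noting that $\widetilde{F}(u,v)$ has no constant term, Theorems \ref{FinPi} and \ref{basis} give $\widetilde{F}(u,v)=\sum_{T\in\two^*_s}b(u,v)_T\,D_T$, applying ${\mathcal K}$ and Theorem \ref{FtoKL} localizes the whole problem in ${\mathcal K}(D_T)$, and your concluding degree-splitting argument (strictly negative versus strictly positive powers of $q$, using $\deg P_{u,v}\leq\lfloor\frac{\ell-1}{2}\rfloor$ and $2\deg\Omega_T<\ell(T)+1$) is also how the paper finishes. The problem is that the entire mathematical content of the theorem lives in the lemma you state but do not prove, namely
\[
\sum_{E\in\mathcal G(T)}sgn(E,T)\,\Upsilon_E(q)\;=\;\Omega_T(q)-q^{n}\,\Omega_T\!\left(\frac{1}{q}\right),\qquad n=\ell(T)+1,
\]
and ``design a sign-reversing involution and check that the sign bookkeeping closes'' is a plan, not a proof. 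In the paper this identity takes up most of Section 5, and it does not reduce to a single involution.

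Concretely, the paper's route through this lemma is: (i) discard the terms of $\mathcal G(T)\setminus\mathcal J(T)$ using the vanishing criterion for $\Upsilon_E$ (Corollary \ref{upsilonzero}); (ii) rewrite $\tilde\Omega_T=\sum_{E\in\mathcal J(T)}sgn(E,T)\Upsilon_E$ as a signed sum over lattice paths (Proposition \ref{maincor}); (iii) construct two distinct sign-reversing involutions (Propositions \ref{l0} and \ref{l0'}), the second of which is needed only when $n$ is even and handles paths returning to zero after the last star --- exactly where the $\chi_{\mathrm{even}}(n)$ threshold you flagged intervenes; (iv) prove that no further cancellation occurs, i.e.\ that for every surviving path $\Gamma\in\tilde{\mathcal L}(T)$ the sign $(-1)^{\varepsilon(\Gamma)+\eta(\Gamma)}$ depends only on the sign of $\Gamma(n)$ and on $T$ (Theorem \ref{ltilde}, a genuinely delicate parity analysis of the statistics $\eta_{i,k}$ and $\zeta_{i,k}$ on maximal excursions of the path); and (v) establish the antisymmetry $q^n\tilde\Omega_T(1/q)=-\tilde\Omega_T(q)$ (Corollary \ref{skew}) by a further reflection involution matching paths with $\Gamma(n)=n-2i$ to paths with $\Gamma(n)=2i-n$. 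Steps (iii)--(v) are precisely the ``delicate point'' you identify, and point (iv) in particular is not a cancellation argument at all but a coherence-of-signs theorem, without which the surviving terms could still interfere. Since none of this is carried out, what you have is an accurate road map of the paper's own proof rather than a proof.
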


The rest of this section is devoted to the proof of Theorem \ref{finalmain}.

\begin{figure}
   \includegraphics[scale=.5]{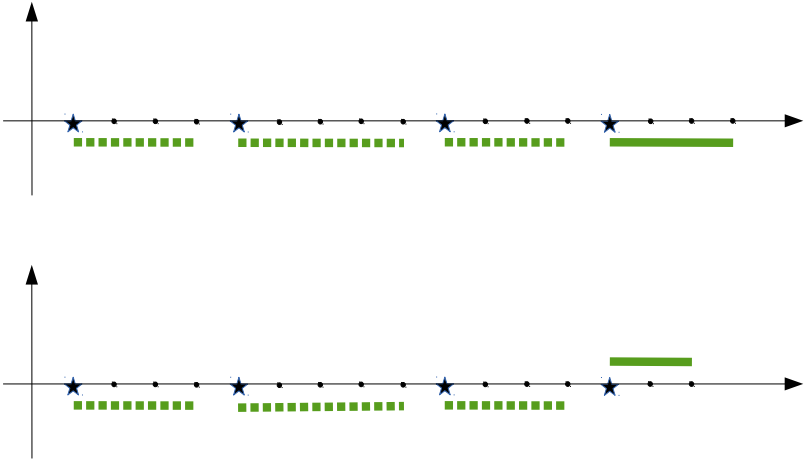}
\caption{The constraints of a slalom path in ${\mathcal {SL}}(T)$, where  $T=0001000010001000$ (top) and  $T=000100001000100$ (bottom).}
\label{slalomfield}
\end{figure}

Let $T \in \two ^{n-1}$, $S(T) \eqdef \{ s_1,\ldots,s_t \}_{<}$, $s_0 \eqdef 0$,
$s_{t+1} \eqdef n$. We define ${\mathcal J} (T)$ to be the set of all 
$E \in \two ^{n-1}$ such that:
\begin{description}
\item[i)] $|\partial(E) \cap (s_j,s_{j+1})|=1$ for all $j\in [0,t-1]$;
\item[ii)] $|\partial(E) \cap (s_t,s_{t+1})| \leq 2$;
\item[iii)] if $\partial(E) \cap (s_t,s_{t+1}) = \{ x, n-1 \}$ then
$x \equiv n-1$ (mod 2).
\end{description}
Given such an $E$ we define $sgn(E,T) \eqdef 
(-1)^{\sum_{i=1}^{t}(s_i - x_i -1)}$ where 
$\{ x_i \} \eqdef \partial(E) \cap (s_{i-1},s_{i})$ for $i \in [t]$ and let
\[
\tilde \Omega_T(q) \eqdef \sum_{E \in \, {\mathcal J}(T)} sgn(E,T) \, \Upsilon_E(q).
\]
We also set ${\mathcal J}(\varepsilon) \eqdef \{ \varepsilon \}$ and
$\tilde \Omega_{\varepsilon}(q) \eqdef \Upsilon_{\varepsilon}(q)$. 
\begin{exa}\label{exa00100} If $T=00100$, then 
${\mathcal J}(T)=\{ 01111,01100,00111,00100,$ $10000,$ $10011,$ $11000,11011 \}$
and $\tilde \Omega_T(q) = - \Upsilon_{01111}- \Upsilon_{01100}
+ \Upsilon_{00111}+ \Upsilon_{00100}- \Upsilon_{10000}- \Upsilon_{10011}+
 \Upsilon_{11000}+ \Upsilon_{11011}=
\Upsilon_{00111}+\Upsilon_{00100}-\Upsilon_{10000}=q^{5}-2q^{4}+2q^2-q$.
\end{exa}
Note that ${\mathcal J}(T)=\emptyset $ if $T$ is not sparse, and that  
${\mathcal J}(T) \subseteq {\mathcal G}(T)$.

\begin{prop}
\label{KLOmega}
Let $(W,S)$ be a Coxeter system and $u,v \in W$, $u<v$.  Then
\begin{equation*}
P_{u,v}(q) - q^{\ell(u,v)} P_{u,v} \left( \frac{1}{q} \right) =
\sum _{T \in \twoindex ^{\ast}} 
q^{\frac{\ell(u,v)- \ell(T)- 1}{2}} \, b(u,v)_T \, \tilde \Omega_{T}(q).
\end{equation*}
\end{prop}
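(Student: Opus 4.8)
The plan is to derive Proposition~\ref{KLOmega} by combining Theorem~\ref{FtoKL} (which relates $\mathcal K(\widetilde F(u,v))$ to the Kazhdan--Lusztig polynomial) with the expansion of $\widetilde F(u,v)$ in the $D_T$ basis provided by Theorem~\ref{basis}. Since $\widetilde F(u,v)\in\Pi$ by Theorem~\ref{FinPi}, and since $\widetilde F(u,v)=\sum_E b(u,v)_E L_E$ by definition, Theorem~\ref{basis} gives immediately
\[
\widetilde F(u,v)=\sum_{T\in\twoindex^*_s} b(u,v)_T\, D_T .
\]
Applying the linear map $\mathcal K$ and using Theorem~\ref{FtoKL}, I get
\[
q^{\frac{-\ell(u,v)}{2}}P_{u,v}(q)-q^{\frac{\ell(u,v)}{2}}P_{u,v}\!\left(\tfrac1q\right)
=\sum_{T\in\twoindex^*_s} b(u,v)_T\,\mathcal K(D_T).
\]
The whole proposition will then follow once I identify $\mathcal K(D_T)$ explicitly and multiply through by $q^{\ell(u,v)/2}$.

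The main computational step is therefore to show that $\mathcal K(D_T)=q^{-\frac{\ell(T)+1}{2}}\tilde\Omega_T(q)$. By definition $D_T=\sum_{E\in\mathcal G(T)}\mathrm{sgn}(E,T)\,L_E$ and $\mathcal K(L_E)=q^{-\frac{\ell(E)+1}{2}}\Upsilon_E$; since every $E\in\mathcal G(T)$ has the same length $n-1=\ell(T)$, the prefactor $q^{-\frac{\ell(T)+1}{2}}$ pulls out uniformly, leaving me to compare $\sum_{E\in\mathcal G(T)}\mathrm{sgn}(E,T)\Upsilon_E$ with $\tilde\Omega_T(q)=\sum_{E\in\mathcal J(T)}\mathrm{sgn}(E,T)\Upsilon_E$. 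Here I would exploit Corollary~\ref{upsilonzero}: for $E\in\mathcal G(T)$ the blocks of $\partial(E)$ strictly between consecutive $s_j$'s have a prescribed parity (condition ii) of $\mathcal G(T)$), and $\Upsilon_E\neq 0$ forces these interior exponent-composition parts to be odd and the leading part to match $E_1$. I expect that $\Upsilon_E=0$ for exactly those $E\in\mathcal G(T)\setminus\mathcal J(T)$, so that the sum over $\mathcal G(T)$ collapses to the sum over $\mathcal J(T)$; the containment $\mathcal J(T)\subseteq\mathcal G(T)$ noted in the text is what makes this comparison meaningful. I would verify, using the sparseness structure \eqref{sparsef} and the parity constraints, that the two sign definitions $\mathrm{sgn}(E,T)$ (one in \S4, one in \S5) agree on $\mathcal J(T)$, which should be a direct bookkeeping check on the exponents $s_i-x_i-1$.

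Assembling the pieces, after establishing $\mathcal K(D_T)=q^{-\frac{\ell(T)+1}{2}}\tilde\Omega_T(q)$ I substitute back and multiply the whole identity by $q^{\ell(u,v)/2}$:
\[
P_{u,v}(q)-q^{\ell(u,v)}P_{u,v}\!\left(\tfrac1q\right)
=\sum_{T\in\twoindex^*_s} b(u,v)_T\,q^{\frac{\ell(u,v)-\ell(T)-1}{2}}\tilde\Omega_T(q),
\]
and finally note that the right-hand sum can be extended from $\twoindex^*_s$ to all of $\twoindex^*$ without change, since $D_T=0$ (hence $\tilde\Omega_T=0$, or more precisely $b(u,v)_T$ contributes nothing in the nonsparse range because $\mathcal J(T)=\emptyset$ there) for non-sparse $T$; this yields exactly the stated formula summed over $\twoindex^*$.

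The hard part will be the combinatorial identification of which $E\in\mathcal G(T)$ have $\Upsilon_E\neq 0$ and the careful matching of the two sign conventions, since this is where the parity conditions defining $\mathcal G(T)$ and $\mathcal J(T)$ must be reconciled through Corollary~\ref{upsilonzero}; everything else is formal manipulation using results already established. A secondary subtlety is keeping the distinction between the even and odd cases of $n$ (reflected in the $\chi_{\textrm{even}}(n)$ boundary condition defining the slaloms and in condition iii) of $\mathcal J(T)$), which I would handle by tracking the parity of the final exponent-composition part throughout.
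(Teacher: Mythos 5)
Your proposal matches the paper's proof essentially step for step: expand $\widetilde{F}(u,v)$ in the $D_T$ basis via Theorems \ref{FinPi} and \ref{basis}, apply $\mathcal K$ and Theorem \ref{FtoKL}, then collapse the sum over $\mathcal G(T)$ to one over $\mathcal J(T)$ by using Corollary \ref{upsilonzero} to show $\Upsilon_E=0$ for all $E\in\mathcal G(T)\setminus\mathcal J(T)$ (the sign bookkeeping is indeed trivial, since the two definitions of $sgn(E,T)$ are literally the same formula after reindexing, each block $(s_{i-1},s_i)$ containing a single element of $\partial(E)$). The only inaccuracy is your word ``exactly'': some $E\in\mathcal J(T)$ also have $\Upsilon_E=0$ (as in Example \ref{exa00100}, where five of the eight terms vanish), but since only the one-way implication is needed for the collapse, this is harmless.
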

{\bf Proof.} 
Note first that, since $u < v$, $\widetilde{F}(u,v)$ has no constant term.
Hence from Theorems \ref{FinPi} and \ref{basis} we have that
\[ 
\widetilde{F}(u,v)  = \sum _{ T \in \twoindex_s ^{\ast } }
b(u,v)_T D_{T} . 
\]
Applying the linear map ${\mathcal K}$ to this equality we get, by Theorem \ref{FtoKL}, that
\[ 
q^{-\frac{\ell(u,v)}{2}} P_{u,v}(q) -q^{\frac{\ell (u,v)}{2}}P_{u,v} \left( \frac{1}{q} \right) = \sum _{ T \in \twoindex_s ^{\ast } }
b(u,v)_T {\mathcal K} (D_{T}) . 
\]
But, by our definitions, we have that
\begin{equation}
\label{KD}
{\mathcal K}(D_{T}) 
 =  \sum_{E \in {\mathcal G}(T)} sgn (E,T) \, {\mathcal K} (L_{E}) \\
 =  \sum_{E \in {\mathcal G}(T)} sgn (E,T) \, q^{-\frac{\ell(E)+1}{2}} \Upsilon_{E}(q).
\end{equation}

\begin{figure}
   \includegraphics[scale=.5]{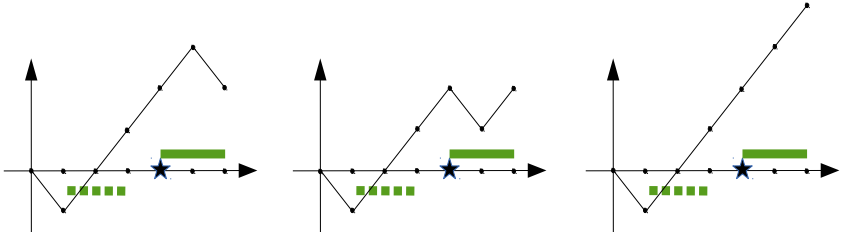}
\caption{Slalom paths associated to $T=00100$}
\label{slalompaths}
\end{figure}

Recall that ${\mathcal J }(T) \subseteq {\mathcal G}(T)$. Let $E \in {\mathcal G}(T)
\setminus {\mathcal J}(T)$, $\{ y_1, \ldots, y_k \}_{<} \eqdef \partial(E)$. 
Then either $|\partial (E) \cap (s_{t},s_{t+1})|
\geq 3$ or $| \partial (E) \cap (s_{j},s_{j+1})| \geq 2$ for some $j \in
[0,t-1]$. But if  either of these conditions hold then $k \geq 3$ and there exists $j \in [k-2]$ such that $y_{j+1} \equiv y_{j} \pmod{2}$ and this, by 
Corollary \ref{upsilonzero},
implies that $\Upsilon_{E}=0$. Hence we conclude from (\ref{KD}) that
\[ 
{\mathcal K} (D_{T}) 
= \sum _{E \in {\mathcal J}(T)} sgn (E,T) \, q^{-\frac{\ell(E)+1}{2}} \, \Upsilon _{E}(q)
= q^{-\frac{\ell(T)+1}{2}} \tilde \Omega_T (q), 
\]
and the result follows. $\Box$

Let $T\in \two^{n-1}_s$ be a sparse sequence of length $n-1$, $s_0=0$ and $S(T)=\{s_1,\ldots,s_t\}_<$. 
We let $\mathcal L(T)$ be the set of all lattice paths $\Gamma$ of length $n$ such that $N(\Gamma)\in \mathcal J(T)$. 
For $\Gamma\in \mathcal L(T)$ and $h\in[t]$ we let $x_h(\Gamma)$ be the unique element in $\partial (N(\Gamma))\cap (s_{h-1},s_{h})$ and $\varepsilon_T(\Gamma)=\sum_{h=1}^t (s_h-x_h(\Gamma)-1)$. We also let $\eta(\Gamma)=m_0(N(\Gamma))=| \{ j\in [n-1]:\, \Gamma(j)\geq 0 \} |$. 
We will usually write $\varepsilon (\Gamma)$ instead of $\varepsilon_T(\Gamma)$ when the sparse sequence $T$ is clear from the context.
\begin{exa}
   Let $T=0010001000$, so $t=2$, $n-1=10$, $S(T)=\{s_1,s_2\}$ with $s_1=3$ and $s_2=7$. The definition of $\mathcal L(T)$ implies that a lattice path belongs to  $\mathcal L(T)$ if and only if it has length $n=11$, it crosses the two dotted  segments in Figure \ref{figurepath} exactly once, and crosses the solid-dotted  segment at most once, but only form NW to SE. 
The path $\Gamma$ depicted in Figure \ref{figurepath} therefore belongs to  $\mathcal L(T)$. In this case $N(\Gamma)=0010111011$ and one can easily check that $x_1(\Gamma)=2$ and  $x_2(\Gamma)=4$. 
Finally, we can observe that $\partial (N(\Gamma))\cap (s_t,s_{t+1})=\{8,10\}$. Hence we have $\varepsilon(\Gamma)=(s_1-x_1(\Gamma)-1)+(s_2-x_2(\Gamma)-1)=0+2=2$. Moreover, we have $\eta(\Gamma)=4$ and $d_+(\Gamma)=5$.
\end{exa}

\begin{figure}
   \includegraphics[scale=.6]{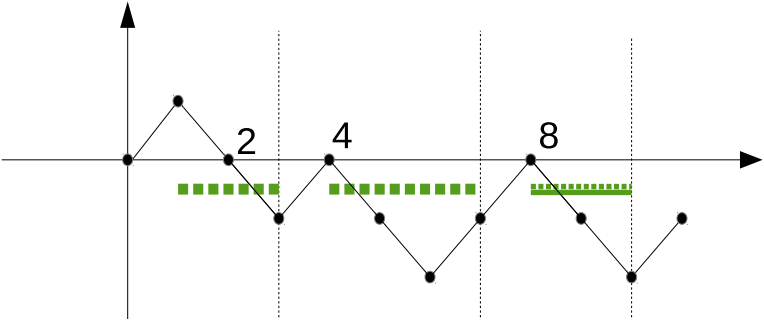}
\caption{A path in $\mathcal L(0010001000)$.}
\label{figurepath}
\end{figure}

The following result is a direct consequence of the definitions of the polynomials $\tilde \Omega_T$ and $\Upsilon_E$ and so we omit its proof.

\begin{prop}
\label{maincor}
 Let $n\in \PP$ and $T\in \two_s^{n-1}$. Then
 \[
  \tilde \Omega_T=\sum_{\Gamma\in \mathcal L(T)}(-1)^{\varepsilon(\Gamma)+\eta(\Gamma)+d_+(\Gamma)}q^{d_+(\Gamma)}.
 \]
\end{prop}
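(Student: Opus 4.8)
The plan is to prove this purely by unfolding the definitions of $\tilde\Omega_T$ and $\Upsilon_E$ and checking that the right-hand side reorganizes term-by-term into the defining sum of $\tilde\Omega_T$. The key structural observation is that the index set $\mathcal L(T)$ partitions according to the descent word $N(\Gamma)$: since by definition $\Gamma\in\mathcal L(T)$ precisely when $N(\Gamma)\in\mathcal J(T)$, and each lattice path $\Gamma$ determines a single word $N(\Gamma)$, one has the disjoint decomposition
\[
\mathcal L(T)=\bigsqcup_{E\in\mathcal J(T)}\mathcal L(E),
\]
where $\mathcal L(E)=\{\Gamma\in\mathcal L(n):N(\Gamma)=E\}$ is exactly the index set appearing in \eqref{Upsdef}. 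First I would use this to split the right-hand sum as an outer sum over $E\in\mathcal J(T)$ and an inner sum over $\Gamma\in\mathcal L(E)$.

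Next I would verify that, on each fiber $\mathcal L(E)$, the two sign statistics $\varepsilon(\Gamma)$ and $\eta(\Gamma)$ are constant and equal to the data attached to $E$. Indeed, $x_h(\Gamma)$ is by definition the unique element of $\partial(N(\Gamma))\cap(s_{h-1},s_h)=\partial(E)\cap(s_{h-1},s_h)$, so $\varepsilon(\Gamma)=\sum_{h=1}^t(s_h-x_h-1)$ is precisely the exponent appearing in $sgn(E,T)$; hence $(-1)^{\varepsilon(\Gamma)}=sgn(E,T)$. Likewise $\eta(\Gamma)=m_0(N(\Gamma))=m_0(E)$, so $(-1)^{\eta(\Gamma)}=(-1)^{m_0(E)}$. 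Only $d_+(\Gamma)$ genuinely varies with $\Gamma$ inside a fiber, and it is exactly the statistic governing $\Upsilon_E$.

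With these identities the inner sums collapse: pulling the two fiber-constant signs out and combining $(-1)^{d_+(\Gamma)}q^{d_+(\Gamma)}=(-q)^{d_+(\Gamma)}$ gives
\[
\sum_{\Gamma\in\mathcal L(E)}(-1)^{\varepsilon(\Gamma)+\eta(\Gamma)+d_+(\Gamma)}q^{d_+(\Gamma)}=sgn(E,T)\,(-1)^{m_0(E)}\sum_{\Gamma\in\mathcal L(E)}(-q)^{d_+(\Gamma)}=sgn(E,T)\,\Upsilon_E(q),
\]
using the defining formula \eqref{Upsdef}. Summing over $E\in\mathcal J(T)$ then reproduces $\sum_{E\in\mathcal J(T)}sgn(E,T)\,\Upsilon_E(q)=\tilde\Omega_T(q)$, which is the definition of $\tilde\Omega_T$, completing the argument.

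I expect no genuine obstacle here; the entire content is bookkeeping, consistent with the paper's own remark that the statement follows directly from the definitions. The only points requiring care are to keep the $(-1)^{m_0(E)}$ factor of $\Upsilon_E$ and the $(-1)^{\varepsilon}$ matching $sgn(E,T)$ separate from the $(-q)^{d_+}$ factor when recombining, and to note that $\varepsilon$ collects contributions only from the blocks $(s_{h-1},s_h)$ with $h\le t$ — the final block $(s_t,s_{t+1})$ does not enter $sgn(E,T)$, in agreement with its absence from $\varepsilon$.
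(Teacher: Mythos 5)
Your proof is correct and is precisely the definitional unfolding that the paper alludes to when it says the result ``is a direct consequence of the definitions'' and omits the proof: you partition $\mathcal L(T)$ into the fibers $\mathcal L(E)$, $E \in \mathcal J(T)$, note that $(-1)^{\varepsilon(\Gamma)} = sgn(E,T)$ and $(-1)^{\eta(\Gamma)} = (-1)^{m_0(E)}$ are constant on each fiber, and recombine with $(-q)^{d_+(\Gamma)}$ to recover $sgn(E,T)\,\Upsilon_E(q)$. Nothing is missing; this is the argument the authors had in mind.
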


Our next target is to simplify the sum in Proposition \ref{maincor}. For this we introduce the following notation: if $T\in \two^{n-1}_s$, $s_0=0$ and $S(T)=\{s_1,\ldots,s_t\}_{<}$, we let $r_i\eqdef s_i+1$ for $i\in[0,t]$ and 
let 
\[\mathcal L_0(T)\eqdef\{\Gamma\in \mathcal L(T):\, \Gamma(r_i)=0 \textrm{ for some }i\in[t]\}.\] For example the path $\Gamma$ depicted in Figure \ref{figurepath} belongs to $\mathcal L_0(0010001000)$ as $\Gamma(4)=0$.

\begin{prop}
\label{l0}Let $n\in \PP$ and $T\in \two^{n-1}_s$. Then
 \[\sum_{\Gamma\in \mathcal L_0(T)}(-1)^{\varepsilon(\Gamma)+\eta(\Gamma)+d_+(\Gamma)}q^{d_+(\Gamma)}=0.\]
\end{prop}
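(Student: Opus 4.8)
The plan is to prove the identity by a sign-reversing involution on $\mathcal L_0(T)$, applied to the expansion in Proposition \ref{maincor}. The first observation is that, by \eqref{dplus}, the exponent $d_+(\Gamma)=\frac{\Gamma(n)+n}{2}$ depends only on the endpoint $\Gamma(n)$. Hence it suffices to fix the value of $\Gamma(n)$ and, for each such value, to produce a fixed-point-free involution $\phi$ of the corresponding paths of $\mathcal L_0(T)$ that reverses the parity of $\varepsilon(\Gamma)+\eta(\Gamma)$. Since both $\varepsilon(\Gamma)$ and $\eta(\Gamma)=m_0(N(\Gamma))$ are determined by the sign pattern $N(\Gamma)$ alone, $\phi$ must \emph{change} $N(\Gamma)$ while keeping it inside $\mathcal J(T)$. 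In particular the naive guess, reflecting the initial part of $\Gamma$ across the $x$-axis, is ruled out: it turns crossings of the level $-\tfrac12$ into crossings of $+\tfrac12$ and so destroys the ``one crossing per gate'' condition, as one sees already on paths that oscillate above $0$ inside a gate. The correct $\phi$ will therefore be a \emph{rearrangement}, not a reflection, of the path.

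I would anchor $\phi$ at the first bad point. Given $\Gamma\in\mathcal L_0(T)$, let $i_0$ be least with $\Gamma(r_{i_0})=0$ and put $p=r_{i_0}=s_{i_0}+1$. The condition $\Gamma(0)=\Gamma(p)=0$ forces $p$ to be even, so that $s_{i_0}=p-1$ is odd; this parity fact is what will drive the sign reversal. The map $\phi$ alters $\Gamma$ only on the balanced segment $[0,p]$ and leaves the tail $\Gamma|_{[p,n]}$ fixed, which automatically preserves $\Gamma(n)$ (hence $d_+$), the gates $(s_{h-1},s_h)$ with $h>i_0$, the conditions on the last interval (these lie entirely in the tail, even when $i_0=t$), and the location $p$ of the first zero. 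On $[0,p]$ I would decompose $\Gamma$ into its maximal runs lying above and below the level $-\tfrac12$, and let $\phi$ permute these runs — interchanging the outer nonnegative runs and correspondingly relocating the low runs — in a way that \emph{preserves the length of every run}. Because $\phi$ only permutes runs of equal total length, the total length of the nonnegative runs, namely $\eta$, is unchanged, as is the number of up-steps; and one designs the permutation precisely so that each gate $(s_{h-1},s_h)$, $h\le i_0$, still contains exactly one $-\tfrac12$-crossing, so that $N(\phi(\Gamma))\in\mathcal J(T)$.

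The effect on the sign comes from the displacement of the crossings: relocating the low runs moves the positions $x_h$ entering $\varepsilon=\sum_h(s_h-x_h-1)$, and one must show that the \emph{total} parity change is odd. Here the parity of $p$ is decisive: the per-gate contributions to the change of $\varepsilon$ telescope, and what survives is governed by $s_0+s_{i_0}=p-1$, which is odd. Thus $\varepsilon$ changes parity while $\eta$ does not, so $\phi$ reverses $(-1)^{\varepsilon(\Gamma)+\eta(\Gamma)}$, which together with $d_+(\phi(\Gamma))=d_+(\Gamma)$ makes the two terms cancel. Fixed-point-freeness holds because a fixed $\Gamma$ would have to be symmetric on each gate of $[0,p]$, which is incompatible with the forced descent $\Gamma(p-1)=1$ into the anchor together with $\Gamma(0)=0$. (That $\Gamma(p-1)=1$, and more generally that $\Gamma\ge 0$ on $[x_{i_0}+1,p]$, is an elementary consequence of there being a single $-\tfrac12$-crossing in each gate and of $\Gamma(p)=0$, and should be recorded first.)

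I expect the main obstacle to be the precise definition of the rearrangement $\phi$ and the proof that it is a well-defined involution. One must simultaneously (i) keep exactly one $-\tfrac12$-crossing inside each gate $(s_{h-1},s_h)$ with $h\le i_0$, (ii) preserve the non-vanishing $\Gamma(r_i)\neq 0$ for $i<i_0$, so that $p$ remains the \emph{first} zero after applying $\phi$ (and hence $\phi^2=\mathrm{id}$), and (iii) control the crossing displacements finely enough to guarantee the odd change of $\varepsilon$. The interaction between the interior constraints $\Gamma(r_i)\neq 0$ ($i<i_0$) and the block rearrangement, together with the book-keeping that makes the per-gate contributions to $\varepsilon$ telescope to the parity of $p-1$, is the technical heart of the argument; the case $i_0=t$ requires only a separate but routine check, since the last interval of $T$ lies in the untouched tail.
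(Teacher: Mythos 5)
Your plan takes the wrong turn exactly where you ``rule out'' reflection. The paper's own proof \emph{is} a reflection argument, made conditional rather than global: with $j$ equal to your $i_0$, one lets $i$ be the largest index less than $j$ with $\Gamma(s_i)=0$ (such an $i$ exists since $s_0=0$), and defines $\phi$ by negating $\Gamma$ on each block $[r_h,s_{h+1}]$, $h\in[i,j-1]$, \emph{except} at those $x$ lying strictly between two zeros of $\Gamma$ interior to the block, which are left unchanged; outside $[r_i,s_j]$ nothing is moved. Freezing the oscillating middles is precisely what defuses your objection about spurious crossings of $+\tfrac12$. The resulting bookkeeping is also different from yours: one gets $\varepsilon(\Gamma)+\varepsilon(\phi(\Gamma))\equiv j-i$ and $\eta(\Gamma)-\eta(\phi(\Gamma))\equiv j-i+1 \pmod 2$, so the total sign always flips, but \emph{which} of $\varepsilon,\eta$ changes parity depends on the parity of $j-i$.

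Your proposed mechanism --- permute runs so that every run length, hence $\eta$, is preserved while $\varepsilon$ flips --- is not merely left unconstructed (you defer it as ``the technical heart''); it cannot exist. Take $T=00100010\in\two^{8}_s$, so $n=9$, $s_1=3$, $s_2=7$, $r_1=4$, $r_2=8$. A direct check shows that the paths in $\mathcal L_0(T)$ with $\Gamma(4)\neq 0$ and $\Gamma(8)=0$ (first zero at $p=r_2=8$), for a fixed value of $\Gamma(9)$, are exactly the two paths
\begin{align*}
\Gamma_1&=(0,1,0,-1,-2,-1,0,1,0,\Gamma(9)),\\
\Gamma_2&=(0,-1,0,1,2,1,0,-1,0,\Gamma(9)).
\end{align*}
Any involution obeying your constraints (tail on $[p,n]$ fixed, $p$ still the first zero, fixed-point-free so that terms cancel) must therefore swap $\Gamma_1$ and $\Gamma_2$. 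But $N(\Gamma_1)=00111000$ and $N(\Gamma_2)=10000010$, so $\eta(\Gamma_1)=5\neq 6=\eta(\Gamma_2)$ while $\varepsilon(\Gamma_1)=\varepsilon(\Gamma_2)=1$: here the cancellation is carried by a parity change of $\eta$ with $\varepsilon$ untouched, the exact opposite of your design, and the two paths do not even have the same number of runs, so no run-length-preserving rearrangement can match them. (This is an instance of $j-i$ even, namely $j=2$, $i=0$, where the paper's conditional reflection flips $\eta$ rather than $\varepsilon$; your heuristic that the parity of $p$ alone drives the sign works only when $j-i$ is odd.) So while your anchor $i_0$ and the tail-fixing agree with the paper, properties (i)--(iii) of your $\phi$ are mutually inconsistent with cancellation, and the argument cannot be completed as proposed.
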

\begin{proof}
 Let $\mathcal L^{(j)}_0(T)=\{\Gamma\in \mathcal L_0(T):\, \min\{i \in [t] :\, \Gamma(r_i)=0\}=j\}$.
 The result follows if we can find an involution
 \[
  \phi:\mathcal L^{(j)}_0(T)\rightarrow \mathcal L^{(j)}_0(T)
 \]
 such that 
 \begin{itemize}
  \item $d_+(\Gamma)=d_+(\phi(\Gamma))$,
  \item $\varepsilon(\Gamma)+\eta(\Gamma)\equiv \varepsilon(\phi(\Gamma))+\eta(\phi(\Gamma))+1 \pmod 2$,
\end{itemize}
for all $\Gamma\in \mathcal L^{(j)}_0(T)$.
The bijection $\phi$ is defined as follows. Fix an arbitrary path $\Gamma\in \mathcal L^{(j)}_0(T)$. Let $i$ be the maximum index smaller than $j$ such that $\Gamma(s_i)=0$. 
In the interval $[r_h,s_{h+1}]$, where $h\in [i, j-1]$, the path $\phi(\Gamma)$ is defined as follows (see Figure \ref{slalom2} for an illustration) 
\[
 \phi(\Gamma)(x)=\begin{cases}
\Gamma(x),&\begin{array}{l}\textrm{if there exist $a,b\in \mathbb N$ such that}\\ \textrm{$r_h<a<x<b<s_{h+1}$ and $\Gamma(a)=\Gamma(b)=0$,}\end{array}\\
-\Gamma(x),&\textrm{otherwise,}
              \end{cases}
\]
for all $x\in [r_h,s_{h+1}]$. 
\begin{figure}
   \includegraphics[scale=.6]{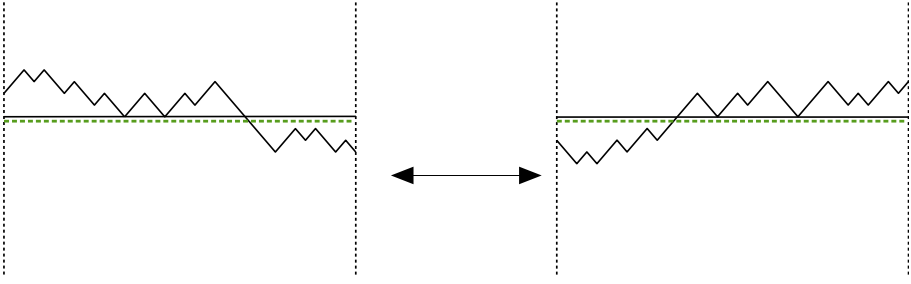}
\caption{The bijection $\phi$ in $[r_h,s_{h+1}]$.}
\label{slalom2}
\end{figure}
Finally we let $\phi(\Gamma)(x)=\Gamma(x)$ if $x\notin [r_i,s_j]$.
Note that, since $\Gamma(s_i)=\Gamma(s_j +1)=0$, we have that 
$|\Gamma(r_i)|=|\Gamma(s_j)|=1$ so $\phi(\Gamma)$ is still a lattice path.

Since $\Gamma(r_h)\neq 0$ and $\Gamma(s_{h+1})\neq 0$ for all $h\in [i,j-1]$ by construction, one can easily see that $\phi$ is an involution on $\mathcal L^{(j)}_0(T)$. It is also clear that $d_+(\Gamma)=d_+(\phi(\Gamma))$ as $\Gamma(n)=\phi(\Gamma)(n)$.

Now observe that $x_h(\Gamma) \equiv x_h(\phi(\Gamma))+1 \pmod 2$ for all $h\in [i+1,j]$ (see also Figure \ref{slalom2}), and that clearly $x_h(\Gamma)= x_h(\phi(\Gamma))$ if $h\notin [i+1,j]$. Therefore 
\begin{equation}\label{epsgamma} 
\varepsilon(\Gamma)+ \varepsilon(\phi(\Gamma))\equiv j-i \pmod 2.
\end{equation}

Finally, we have that (see Figure \ref{slalom2})
\begin{eqnarray}
\label{etagamma}
\nonumber
\eta (\Gamma )- \eta(\phi (\Gamma )) & = & \sum_{h=i}^{j-1}
(x_{h+1}(\phi (\Gamma ))-s_{h})-\sum_{h=i}^{j-1}(s_{h+1}-x_{h+1}(\Gamma ))\\
\nonumber
& \equiv & \sum_{h=i}^{j-1} (x_{h+1}(\phi (\Gamma ))-x_{h+1}(\Gamma )+s_{h+1}-s_{h}
) \\ \nonumber
& \equiv & j-i+s_{j}-s_{i} \\
& \equiv & j-i+1 \pmod{2}
\end{eqnarray}
since $\Gamma (s_{i})=\Gamma (s_{j}+1) =0$.

The result then follows from (\ref{epsgamma}) and (\ref{etagamma}).
\end{proof}
If $n$ is even other cancellations may occur in Proposition \ref{maincor} and to describe this, for $T\in \two_s^{n-1}$ we also let 
\[\mathcal L_0'(T)=\{\Gamma\in \mathcal L(T):\, \Gamma(x)=0 \textrm{ for some }x\in [r_{t},n]\}. 
\]
\begin{prop}\label{l0'}
   Let $n\in \PP$ be even and $T\in \two_s^{n-1}$. Then
\[
  \sum_{\Gamma\in \mathcal L'_0(T)\setminus \mathcal L_0(T)}(-1)^{\varepsilon(\Gamma)+\eta(\Gamma)+d_+(\Gamma)}q^{d_+(\Gamma)}=0. 
\]

\end{prop}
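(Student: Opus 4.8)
The plan is to construct a sign-reversing involution on $\mathcal L'_0(T)\setminus \mathcal L_0(T)$ exactly in the spirit of Proposition \ref{l0}: a map $\psi$ that preserves $d_+(\Gamma)$ and reverses the parity of $\varepsilon(\Gamma)+\eta(\Gamma)$, so that the $q$-weighted signed sum cancels term by term in $q$. First I would pin down the set. Deleting $\mathcal L_0(T)$ forces $\Gamma(r_i)\neq 0$ for all $i\in[t]$, and since then in particular $\Gamma(r_t)\neq 0$, a path lies in $\mathcal L'_0(T)\setminus \mathcal L_0(T)$ precisely when $\Gamma(r_i)\neq 0$ for every $i\in[t]$ while $\Gamma(x)=0$ for some $x\in(r_t,n]$. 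Let $\tau$ be the least such $x$, the first forbidden return to $0$ in the last interval, and let $m\in[0,t]$ be the largest index with $\Gamma(s_m)=0$; since $\Gamma(s_0)=\Gamma(0)=0$ this is well defined and $s_m\le s_t<\tau$.

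The involution I would use reflects $\Gamma$ in the horizontal axis on $[s_m,\tau]$: set $\psi(\Gamma)(x)=-\Gamma(x)$ for $x\in[s_m,\tau]$ and $\psi(\Gamma)(x)=\Gamma(x)$ otherwise. As $\Gamma(s_m)=\Gamma(\tau)=0$, the result is again a lattice path, and since $\tau\le n$ the value $\Gamma(n)$ is unchanged, so $d_+(\psi(\Gamma))=d_+(\Gamma)$ by \eqref{dplus}. Negation sends nonzero values to nonzero values, so $\psi(\Gamma)$ has the same first zero $\tau$ in $(r_t,n]$ and the same index $m$; hence $\tau$ and $s_m$ are intrinsic and $\psi$ is an involution. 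One also checks that $\Gamma(r_i)\neq 0$ is preserved for all $i$, so $\psi$ maps $\mathcal L'_0(T)\setminus \mathcal L_0(T)$ into itself once membership in $\mathcal L(T)$ is established.

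It then remains to verify that $\psi$ preserves membership in $\mathcal J(T)$ and reverses the sign. The membership statement is the delicate point. Reflection in the axis turns each crossing of the line $y=-\tfrac12$ by $\Gamma$ into a crossing of $y=+\tfrac12$, so inside an open interval $(s_{h-1},s_h)$ a crossing can only move, and it can move only across a boundary index $s_j$, which belongs to no open interval; this should keep every count $|\partial(N(\Gamma))\cap(s_{h-1},s_h)|$ equal to $1$ and hence preserve conditions i) and ii) defining $\mathcal J(T)$. In the last interval, where condition iii) constrains the parity of the crossing position, I would invoke that $n$ is even, so that $n-1$ is odd and the unique admissible position has the forced parity; this is exactly what keeps $\psi(\Gamma)$ in $\mathcal J(T)$ and fixes $\tau$. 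For the sign, reflecting on $[s_m,\tau]$ shifts each crossing position $x_h(\Gamma)$, $m<h\le t$, by an odd amount and changes $\eta$ only along the negated block, and a count identical in spirit to \eqref{epsgamma}--\eqref{etagamma} then gives $\varepsilon(\Gamma)+\eta(\Gamma)\equiv \varepsilon(\psi(\Gamma))+\eta(\psi(\Gamma))+1\pmod 2$, once more using that the reflected excursions have even length and that the block begins and ends at $0$.

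The hard part will be precisely the membership verification in the last interval: one must show that the evenness of $n$ and condition iii) leave no room for the reflected crossing to land on a position of the wrong parity, and that the counts in the middle intervals really are invariant under the axis reflection rather than merely shifted across a boundary index. Granting this and the parity bookkeeping, summing over the pairs $\{\Gamma,\psi(\Gamma)\}$ yields the asserted vanishing, which together with Proposition \ref{l0} reduces the sum in Proposition \ref{maincor} to a sum over the genuine $T$-slaloms.
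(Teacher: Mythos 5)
Your overall strategy---a $d_+$-preserving, sign-reversing involution on $\mathcal L'_0(T)\setminus\mathcal L_0(T)$, in the spirit of Proposition \ref{l0}---is exactly the paper's, but the specific map you propose, the plain reflection $\psi(\Gamma)(x)=-\Gamma(x)$ on all of $[s_m,\tau]$, does not map the set to itself, and the point you flag as ``the hard part'' is not merely hard: it is false. Take $n=8$ (even), $T=0000100\in\two_s^{7}$, so $t=1$, $s_1=5$, $r_1=6$, and take $\Gamma=(0,1,0,1,0,-1,-2,-1,0)$. Then $N(\Gamma)=0000111$ and $\partial(N(\Gamma))=\{4,7\}$, so $\Gamma\in\mathcal L(T)$; moreover $\Gamma(6)=-2\neq 0$ and $\Gamma(8)=0$, so $\Gamma\in\mathcal L'_0(T)\setminus\mathcal L_0(T)$, with your $m=0$ and $\tau=8$. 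Your map gives $\psi(\Gamma)=(0,-1,0,-1,0,1,2,1,0)$, whence $N(\psi(\Gamma))=1010000$ and $\partial(N(\psi(\Gamma)))=\{1,2,3,7\}$: condition i) in the definition of $\mathcal J(T)$ fails (three elements of $\partial$ in $(s_0,s_1)$ instead of one), so $\psi(\Gamma)\notin\mathcal L(T)$ and the pairing collapses. The failure is structural: reflection turns crossings of $y=-\frac{1}{2}$ into crossings of $y=+\frac{1}{2}$, and when $\Gamma$ returns to $0$ several times inside a block, these two crossing numbers inside that block differ; the count $|\partial(N(\Gamma))\cap(s_{h-1},s_h)|$ is therefore not invariant under a full reflection---it does not just ``move across a boundary index.''

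The repair, which is what the paper's proof actually does (here and already in Proposition \ref{l0}), is to negate \emph{selectively}: inside each block one leaves unchanged the portion of the path trapped strictly between two interior zeros (formally, $\psi(\Gamma)(x)=\Gamma(x)$ whenever there exist $h\in[t]$ and $a,b$ with $r_{h-1}<a<x<b<s_h$ and $\Gamma(a)=\Gamma(b)=0$), negates the remaining initial and final stretches of each block, and negates all of the final stretch $[r_t,j]$; the hypotheses $\Gamma\notin\mathcal L_0(T)$ (so $\Gamma(r_{h-1})\neq 0$) and the maximality of the index $i=m$ (so $\Gamma(s_h)\neq 0$) make this an involution and keep exactly one sign change per block. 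On the example above the paper's map produces $(0,-1,0,1,0,1,2,1,0)$, which does lie in $\mathcal L'_0(T)\setminus\mathcal L_0(T)$. With that corrected involution, your parity bookkeeping goes through essentially as you sketch (using that $n$ even forces $\Gamma(x)\geq 0$ for $x\geq\tau$), giving $\varepsilon(\Gamma)+\varepsilon(\psi(\Gamma))\equiv t-i$ and $\eta(\Gamma)+\eta(\psi(\Gamma))\equiv t-i-1 \pmod 2$, hence the cancellation; but as written your proof rests on an involution that is not defined on the summation set.
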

\begin{proof}
This proof is similar to that of Proposition \ref{l0}. We show that there exists an involution $\psi$ on $\mathcal L'_0(T)\setminus \mathcal L_0(T)$ such that $d_+(\Gamma)=d_+(\psi(\Gamma))$ and
  $\varepsilon(\Gamma)+\eta(\Gamma)\equiv \varepsilon(\psi(\Gamma))+\eta(\psi(\Gamma))+1 \pmod 2$,
for all $\Gamma\in \mathcal L'_0(T)\setminus \mathcal L_0(T)$.

If $\Gamma\in \mathcal L'_0(T)\setminus \mathcal L_0(T)$ let $j=\min\{x\in[r_t,n]:\, \Gamma(x)=0\}$ and $i=\max\{h\in[0,t]:\, \Gamma(s_h)=0\}$.  The path $\psi(\Gamma)$ is defined in the following way: if $r_i\leq x\leq s_t$ we let \[
 \psi(\Gamma)(x)=\begin{cases}
\Gamma(x),&\begin{array}{l}\textrm{if there exist $h\in[t]$ and $a,b\in \mathbb N$ such that}\\ \textrm{$r_{h-1}<a<x<b<s_{h}$ and $\Gamma(a)=\Gamma(b)=0$,}\end{array}\\
-\Gamma(x),&\textrm{otherwise.}
              \end{cases}
\]
Observe that in this situation $\Gamma(r_{h-1})\neq 0$ since $\Gamma\notin \mathcal L_0(T)$ and $\Gamma(s_h)\neq 0$ by the maximality of the index $i$.
Finally, we let $\psi(\Gamma)(x)=-\Gamma(x)$ if $x\in [r_t,j]$ and $\psi(\Gamma)(x)=\Gamma(x)$ if $x>j$ or $x\leq s_i$.

Note that, since $n-1$ is odd we have $\Gamma(x)\geq 0$ for all $x\geq j$. By reasoning as in the proof of Proposition \ref{l0} one obtains that $\varepsilon(\Gamma)+\varepsilon(\psi(\Gamma))\equiv t-i \pmod 2$ and $\eta(\Gamma)+\eta(\psi(\Gamma))\equiv t-i-1 \pmod 2$ (since $\Gamma(s_i)=\Gamma(j)=0$), thereby completing the proof.
\end{proof}

Propositions \ref{l0} and \ref{l0'} lead us to consider the set of paths $\tilde{\mathcal L }(T)$ given by $\tilde {\mathcal L}(T)\eqdef \mathcal L(T)\setminus (\mathcal L_0(T)\cup \mathcal L_0'(T))$ if $n$ is even and $\tilde {\mathcal L}(T)\eqdef \mathcal L(T)\setminus \mathcal L_0(T)$ if $n$ is odd associated to $T\in \two^{n-1}_s$. 
Note that $\Gamma(n) \neq 0$ if $\Gamma \in \tilde{\mathcal L }(T)$.
Figure \ref{slalompath} shows an example of a path in $\tilde{\mathcal L}(T)$.

\begin{figure}
   \includegraphics[scale=.5]{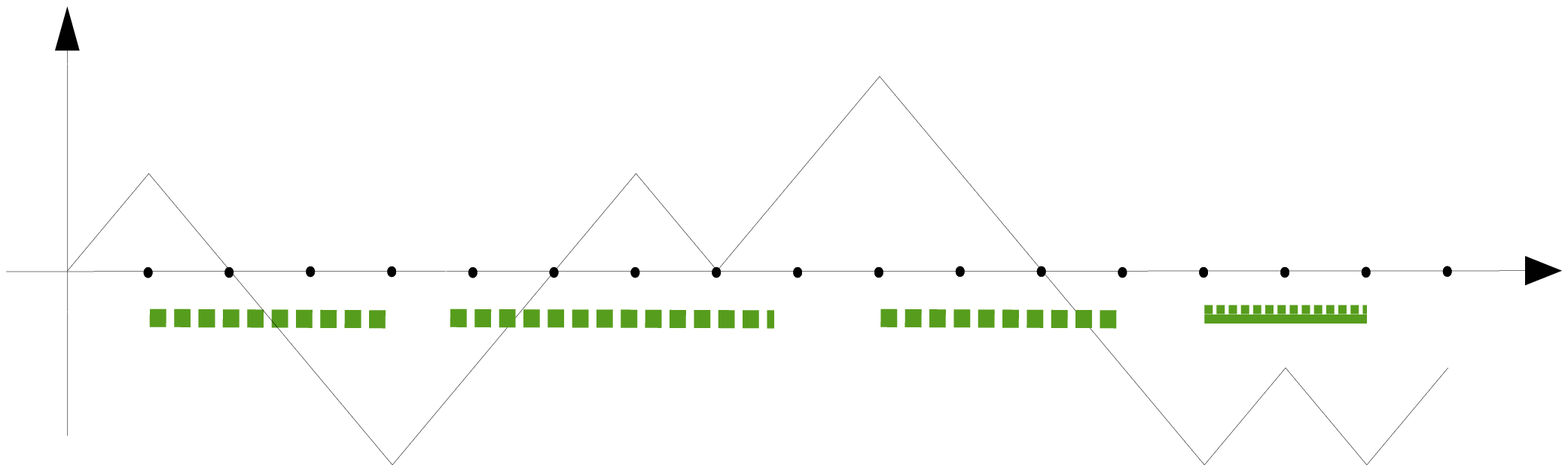}
\caption{A path in  $\tilde {\mathcal L}(0001000010001000)$.}
\label{slalompath}
\end{figure}

Now we want to show that there are no further cancellations in the sum appearing in Proposition \ref{maincor}.  More precisely, if we let $\chi_{\Gamma(n)>0}=1$ if $\Gamma(n)>0$ and $\chi_{\Gamma(n)>0}=0$ otherwise, we have the following result.
\begin{thm}\label{ltilde}
 Let $n\in \PP$, $T\in \two^{n-1}_s$ and $\Gamma\in \tilde{\mathcal {L}}(T)$. Then
 \[
  \varepsilon(\Gamma)+\eta(\Gamma)\equiv \chi_{\Gamma(n)>0}(n-1)+\sum_{h=1}^{t} {r_h}\pmod 2.
 \]
\end{thm}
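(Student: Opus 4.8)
The plan is to reduce the claimed congruence to a statement about the positions of the sign-changes of $\Gamma$, and then to pin those positions down using the realizability of $N(\Gamma)$ as a sign-vector together with the exclusions defining $\tilde{\mathcal L}(T)$. Call an index $i\in\{0,1,\dots,n-1\}$ a \emph{crossing} of $\Gamma$ if exactly one of $\Gamma(i),\Gamma(i+1)$ is negative; it is a \emph{down-crossing} if $\Gamma(i)\geq 0>\Gamma(i+1)$ and an \emph{up-crossing} otherwise. First I would record an elementary reduction. Writing $x_h=x_h(\Gamma)$ and using $r_h=s_h+1$,
\[
\varepsilon(\Gamma)+\sum_{h=1}^t r_h=\sum_{h=1}^t\big((s_h-x_h-1)+(s_h+1)\big)=\sum_{h=1}^t(2s_h-x_h)\equiv\sum_{h=1}^t x_h\pmod 2 .
\]
Hence the Theorem is equivalent to $\eta(\Gamma)\equiv\chi_{\Gamma(n)>0}(n-1)+\sum_{h=1}^t x_h\pmod 2$.

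Next I would compute $\eta(\Gamma)$ modulo $2$ in terms of all crossings. Let $0\leq c_1<\dots<c_m\leq n-1$ be the crossings of $\Gamma$. Since $\Gamma(0)=0$ the path is non-negative up to $c_1$ and its sign alternates across each crossing, so the non-negative positions in $[n-1]$ are the blocks $[1,c_1],[c_2+1,c_3],\dots$, together with a final block $[c_m+1,n-1]$ exactly when $m$ is even (equivalently, since $\Gamma(n)\neq 0$, exactly when $\Gamma(n)>0$). Counting these blocks gives $\eta(\Gamma)=\sum_{k=1}^m(-1)^{k-1}c_k$ if $m$ is odd and $\eta(\Gamma)=\sum_{k=1}^m(-1)^{k-1}c_k+(n-1)$ if $m$ is even, whence
\[
\eta(\Gamma)\equiv\chi_{\Gamma(n)>0}(n-1)+\sum_{k=1}^m c_k\pmod 2 .
\]
Since each $x_h\in(s_{h-1},s_h)$ is a genuine sign-change (indeed $x_h\leq s_h-1\leq n-2$, so $x_h\in\partial(N(\Gamma))\setminus\{n-1\}$), the $t$ indices $x_1<\dots<x_t$ are among the $c_k$, and the Theorem reduces to showing that the remaining (``extra'') crossings contribute nothing, i.e.\ $\sum_{c\notin\{x_1,\dots,x_t\}}c\equiv 0\pmod 2$.

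The engine is a parity constraint coming from realizability: since $\Gamma(i)\equiv i\pmod 2$ and the steps are $\pm1$, a down-crossing at $i$ forces $\Gamma(i)=0$ (so $i$ is even) while an up-crossing at $i$ forces $\Gamma(i+1)=0$ (so $i$ is odd). Thus a crossing is a down-crossing if and only if its index is even, and it suffices to prove that \emph{every crossing other than $x_1,\dots,x_t$ is a down-crossing}. I would organise these extra crossings into the breakpoint crossings (at some $s_j$, $j\in[0,t]$), the at most one recorded crossing in the interior of the tail $(s_t,n)$, and a possible crossing at $n-1$; by the partition of $[n-1]$ into the open intervals $(s_{h-1},s_h)$, the breakpoints $s_1,\dots,s_t$, and $(s_t,n)$, these account for all crossings not equal to some $x_h$.

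Finally I would carry out the three checks. The crossing at $s_0=0$ is a down-crossing because $\Gamma(0)=0$. An up-crossing at a breakpoint $s_j$ with $j\in[t]$ would force $\Gamma(r_j)=\Gamma(s_j+1)=0$, i.e.\ $\Gamma\in\mathcal L_0(T)$, which is excluded from $\tilde{\mathcal L}(T)$; so every breakpoint crossing is a down-crossing (this also subsumes the degenerate case $s_t=n-1$, where $r_t=n$). Likewise an up-crossing at $n-1$ would force $\Gamma(n)=0$, impossible since $\Gamma(n)\neq 0$ on $\tilde{\mathcal L}(T)$. For the tail interior, if $n$ is even the exclusion of $\mathcal L_0'(T)$ gives $\Gamma(x)\neq 0$ for all $x\in[r_t,n]$, and a crossing at any index of $[r_t,n-1]$ would pin a tail value to $0$; hence there are no tail crossings at all. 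If $n$ is odd then $n-1$ is even, and by the defining conditions (ii)--(iii) of $\mathcal J(T)$ the set $\partial(N(\Gamma))\cap(s_t,n)$ is either $\{n-1\}$ or $\{x,n-1\}$ with $x\equiv n-1\equiv 0\pmod 2$, so the recorded tail crossing, if any, sits at an even index and is a down-crossing. In every case all extra crossings occur at even indices, giving $\sum_{c\notin\{x_h\}}c\equiv 0\pmod2$ and hence the Theorem. The hard part will be precisely this last step: enumerating the possible crossings in the tail and at the breakpoints and verifying, subcase by subcase, that the realizability parity together with the exclusions of $\mathcal L_0(T)$, $\mathcal L_0'(T)$ and conditions (ii)--(iii) of $\mathcal J(T)$ forbid every extra up-crossing.
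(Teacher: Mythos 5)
Your proposal is correct, but it follows a genuinely different route from the paper's proof. The paper partitions $[1,n-1]$ into segments $[r_{i_v},s_{i_{v+1}}]$ delimited by the breakpoints where $\Gamma$ vanishes (i.e.\ $\Gamma(s_i)=0$), proves for each internal segment the local identity $\eta_{i,k}(\Gamma)+\zeta_{i,k}(\Gamma)\equiv 0 \pmod 2$ by counting maximal intervals of non-negativity and exploiting the alternating parities of the $x_h$ within a segment, and then handles the final segment by a four-fold case analysis (parity of $n$, sign of $\Gamma(n)$, sign of $\Gamma(r_t)$). You instead argue globally: you rewrite $\eta(\Gamma)$ as an alternating sum over \emph{all} sign-crossings of $\Gamma$, which reduces the theorem to the assertion that the crossings other than $x_1,\dots,x_t$ have even index sum; the dichotomy ``down-crossing iff even index, up-crossing iff odd index'' (a consequence of $\Gamma(i)\equiv i\pmod 2$) then converts this into checking that each exclusion defining $\tilde{\mathcal L}(T)$ --- no zero at any $r_j$, no zero in $[r_t,n]$ when $n$ is even, $\Gamma(n)\neq 0$, and conditions (ii)--(iii) of $\mathcal J(T)$ --- forbids an up-crossing at the corresponding location, and your case checks there are all sound. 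What your route buys: it dispenses with the segment decomposition and the final case analysis, and it makes completely transparent which hypothesis kills which potential odd-index crossing, so one sees exactly why the sets $\mathcal L_0(T)$ and $\mathcal L_0'(T)$ had to be removed. What the paper's route buys: its segment-flipping bookkeeping is the same machinery as the involutions in the proofs of Propositions \ref{l0} and \ref{l0'} and of Corollary \ref{skew}, so the three arguments are uniform. Both proofs ultimately rest on the same elementary parity facts about lattice paths, but your reorganization is cleaner and, in my view, more illuminating.
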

\begin{proof}
 Suppose $i,k\in [0,t]$, $i<k$ are such that $\Gamma(s_i)=\Gamma(s_k)=0$ and $\Gamma(s_h)\neq 0$ for all $h\in (i,k)$.
 We consider $\eta_{i,k}(\Gamma)\eqdef|\{x\in [r_{i},s_k]:\, \Gamma(x)\geq 0\}|$ and $\zeta_{i,k}(\Gamma)\eqdef\sum_{h={i+1}}^{k}x_h(\Gamma)$ and we claim that
 \begin{equation}
  \eta_{i,k}(\Gamma)+\zeta_{i,k}(\Gamma)\equiv 0 \pmod 2.
 \end{equation}
 If $k-i$ is odd the number of maximal intervals contained in $[r_i,s_k]$ where $\Gamma$ takes nonnegative values is $\frac{k-i+1}{2}$ and all such intervals contain an odd number of elements. 
If $k-i$ is even  there are $\frac{k-i}{2}$ such intervals with an odd number of elements and one interval with an even number of elements. In both cases we deduce that $\eta_{i,k}(\Gamma)\equiv  \lfloor \frac{k-i+1}{2}\rfloor$.
 Now we observe that $x_{k}(\Gamma)\equiv x_{k-2}(\Gamma)\equiv \cdots \equiv 1 \pmod 2$ and $x_{k-1}(\Gamma)\equiv x_{k-3}(\Gamma)\equiv \cdots \equiv 0 \pmod 2$. Therefore
 \[
 \zeta_{i,k}(\Gamma)= \sum_{h=i+1}^{k}x_h(\Gamma)\equiv \lfloor \frac{k-i+1}{2}\rfloor  \pmod 2
 \]
 and so
 \begin{align*}
  \eta_{i,k}(\Gamma)+\zeta_{i,k}(\Gamma)&\equiv \lfloor \frac{k-i+1}{2}\rfloor+\lfloor \frac{k-i+1}{2}\rfloor \equiv 0 \pmod 2
 \end{align*}
 Now let $i$ be the maximum index such that $\Gamma(s_i)=0$. We let in this case $\eta_{i,t+1}(\Gamma)\eqdef|\{x\in [r_{i},n-1]:\, \Gamma(x)\geq 0\}|$ and $\zeta_{i,t+1}(\Gamma)\eqdef\sum_{h=i+1}^{t}x_h(\Gamma)$. We leave to the reader to verify that if $\Gamma(n)>0$ and $n$ is even then $\eta_{i,t+1}(\Gamma)\equiv \lfloor \frac{t-i-1}{2}\rfloor$ and $\zeta_{i,t+1}(\Gamma) \equiv \lfloor \frac{t-i+1}{2}\rfloor$ and therefore
\[
   \eta_{i,t+1}(\Gamma)+\zeta_{i,t+1}(\Gamma)\equiv 1.
\]
In all the other cases we have $\eta_{i,t+1}(\Gamma)+\zeta_{i,t+1}(\Gamma)\equiv 0$; in fact, with an argument similar to the one used in the previous case one can show that:
\begin{itemize}
   \item if $n$ is even and $\Gamma(n)<0$ we have $\eta_{i,t+1}(\Gamma)\equiv\zeta_{i,t+1}(\Gamma)\equiv \lfloor \frac{t-i}{2}\rfloor$;
\item if $n$ is odd and $\Gamma(n)>0$ we have $\eta_{i,t+1}(\Gamma)\equiv \zeta_{i,t+1}(\Gamma)\equiv\lfloor \frac{t-i+1}{2}\rfloor$;
\item if $n$ is odd, $\Gamma(n)<0$ and $\Gamma(r_t)<0$ we have $\eta_{i,t+1}(\Gamma)\equiv \zeta_{i,t+1}(\Gamma)\equiv\lfloor \frac{t-i}{2}\rfloor$;
\item if $n$ is odd, $\Gamma(n)<0$ and $\Gamma(r_t)>0$ we have $\eta_{i,t+1}(\Gamma)\equiv \zeta_{i,t+1}(\Gamma)\equiv\lfloor \frac{t-i-1}{2}\rfloor$.
\end{itemize}

 Now we can conclude the proof. 
Let $\Gamma\in \tilde{\mathcal {L}}(T)$ and let $\{i_1,\ldots,i_z\}_<=\{h\in [0,t]:\, \Gamma(s_h)=0\}\cup\{t+1\}$. 
We have
 \begin{align*}
  \eta(\Gamma)+\varepsilon(\Gamma)&\equiv \sum_{v=1}^{z-1} \left( \eta_{i_v,i_{v+1}}(\Gamma)+\zeta_{i_v,i_{v+1}}(\Gamma) \right) +r_1+\cdots+r_t \pmod 2\\
  &\equiv \chi_{\Gamma(n)>0}(n-1)+r_1+\cdots+r_{t}\pmod 2.
\end{align*}

\end{proof}
\begin{cor}
\label{combinter}
Let $n\in \PP$ and $T\in \two^{n-1}_s$. Then
\[
 [q^i]\tilde \Omega_T=(-1)^{i+r_1+\cdots+r_{t}+(n-1) \chi_{2i>n}}|\{\Gamma\in \tilde{\mathcal {L}}(T):\Gamma(n)=2i-n\}|,
\]
where $\chi_{2i>n}=1$ if $2i>n$ and $\chi_{2i>n}=0$ otherwise.
\end{cor}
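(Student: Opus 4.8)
The plan is to assemble the corollary directly from the three results that immediately precede it, since extracting the coefficient of $q^i$ amounts to collecting the paths with a fixed terminal height and observing that the sign attached to them is then constant.

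First I would start from Proposition \ref{maincor}, which writes $\tilde\Omega_T$ as a signed generating function over all of $\mathcal L(T)$ with monomial $q^{d_+(\Gamma)}$. By Propositions \ref{l0} and \ref{l0'}, the contributions of the paths in $\mathcal L_0(T)$ (and, when $n$ is even, of those in $\mathcal L_0'(T)\setminus\mathcal L_0(T)$) cancel in pairs, so the sum may be restricted to $\tilde{\mathcal L}(T)$ without altering $\tilde\Omega_T$. This yields
\[
\tilde\Omega_T = \sum_{\Gamma \in \tilde{\mathcal L}(T)} (-1)^{\varepsilon(\Gamma)+\eta(\Gamma)+d_+(\Gamma)}\, q^{d_+(\Gamma)}.
\]

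Next I would read off the coefficient of $q^i$. By Eq. \eqref{dplus} we have $d_+(\Gamma) = \frac{\Gamma(n)+n}{2}$, so the condition $d_+(\Gamma)=i$ is equivalent to $\Gamma(n)=2i-n$; in particular $\Gamma(n)>0$ precisely when $2i>n$, whence $\chi_{\Gamma(n)>0}=\chi_{2i>n}$ for every path contributing to $[q^i]\tilde\Omega_T$. The crux is then Theorem \ref{ltilde}, which for $\Gamma\in\tilde{\mathcal L}(T)$ gives
\[
\varepsilon(\Gamma)+\eta(\Gamma) \equiv \chi_{\Gamma(n)>0}(n-1) + \sum_{h=1}^t r_h \pmod 2,
\]
so that, on the fibre $\Gamma(n)=2i-n$, the full exponent satisfies
\[
\varepsilon(\Gamma)+\eta(\Gamma)+d_+(\Gamma) \equiv i + \sum_{h=1}^t r_h + (n-1)\chi_{2i>n} \pmod 2,
\]
which is independent of the individual path $\Gamma$.

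Finally I would factor this constant sign out of the sum: since every $\Gamma\in\tilde{\mathcal L}(T)$ with $\Gamma(n)=2i-n$ carries the common sign $(-1)^{i+r_1+\cdots+r_t+(n-1)\chi_{2i>n}}$, the coefficient $[q^i]\tilde\Omega_T$ is that sign times the number of such paths, which is exactly the asserted formula. I do not expect any genuine obstacle here, since the substantive cancellations and the parity computation have already been carried out in Propositions \ref{l0} and \ref{l0'} and in Theorem \ref{ltilde}; the only care required is to identify $\chi_{\Gamma(n)>0}$ with $\chi_{2i>n}$ via \eqref{dplus} and to fold the monomial degree $d_+(\Gamma)=i$ into the exponent.
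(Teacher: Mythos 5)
Your proof is correct and follows exactly the route the paper takes: its own proof simply cites Propositions \ref{maincor}, \ref{l0}, \ref{l0'}, Theorem \ref{ltilde}, and Eq.\ \eqref{dplus}, which are precisely the ingredients you assemble. Your write-up just makes explicit the bookkeeping (restriction to $\tilde{\mathcal L}(T)$, the identification $\chi_{\Gamma(n)>0}=\chi_{2i>n}$ on the fibre $d_+(\Gamma)=i$, and factoring out the constant sign) that the paper leaves implicit.
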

\begin{proof}
 This follows immediately from Propositions \ref{maincor}, \ref{l0} and \ref{l0'}, and Theorem \ref{ltilde}, together with (\ref{dplus}).
\end{proof}

\begin{cor}\label{skew}
Let $n\in \PP$, and $T\in \two^{n-1}_s$. Then
$$
q^n \tilde \Omega_T \left( \frac{1}{q} \right) = - \tilde \Omega_T(q).
$$
\end{cor}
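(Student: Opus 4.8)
The plan is to reduce the functional equation to a purely combinatorial symmetry of the set $\tilde{\mathcal L}(T)$ and then establish that symmetry by an explicit height-reversing involution. Write $B_i\eqdef|\{\Gamma\in\tilde{\mathcal L}(T):\Gamma(n)=2i-n\}|$ and $R\eqdef r_1+\cdots+r_t$, so that Corollary \ref{combinter} reads $[q^i]\tilde\Omega_T=(-1)^{i+R+(n-1)\chi_{2i>n}}B_i$. The identity $q^n\tilde\Omega_T(1/q)=-\tilde\Omega_T(q)$ is equivalent to $[q^{n-i}]\tilde\Omega_T=-[q^i]\tilde\Omega_T$ for all $i$, and since $\Gamma(n)\neq 0$ for $\Gamma\in\tilde{\mathcal L}(T)$ the coefficient vanishes unless $2i\neq n$. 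First I would dispose of the signs: for $2i\neq n$ one has $\chi_{2(n-i)>n}=1-\chi_{2i>n}$, whence the exponent $(n-i)+R+(n-1)\chi_{2(n-i)>n}$ differs from $i+R+(n-1)\chi_{2i>n}$ by $(n-2i)+(n-1)(1-2\chi_{2i>n})\equiv 2n-1\equiv 1\pmod 2$, an odd integer. Thus the whole statement is equivalent to $B_i=B_{n-i}$ for every $i$, i.e. to the assertion that $\tilde{\mathcal L}(T)$ contains exactly as many paths ending at height $h$ as ending at height $-h$.

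The core of the argument is therefore to produce an involution $\iota$ on $\tilde{\mathcal L}(T)$ with $\iota(\Gamma)(n)=-\Gamma(n)$; equivalently, since $\Gamma(n)=2d_+(\Gamma)-n$, an involution interchanging up-steps and down-steps. The naive candidate $\Gamma\mapsto-\Gamma$ does \emph{not} work: recalling that $\partial(N(\Gamma))$ records the crossings of the line $y=-\tfrac12$, reflecting about the $x$-axis turns these into crossings of $y=+\tfrac12$, and because $\Gamma(0)=0$ lies on the nonnegative side this destroys the requirement in $\mathcal J(T)$ that each interval $(s_j,s_{j+1})$ contain exactly one crossing (a path beginning with a single negative excursion that returns to $0$ exactly at a marker is the typical obstruction). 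To repair this I would reflect only a terminal portion of the path: let $s_i$ be the largest marker with $\Gamma(s_i)=0$ (it exists since $\Gamma(s_0)=\Gamma(0)=0$), and set $\iota(\Gamma)(x)=\Gamma(x)$ for $x\le s_i$ and $\iota(\Gamma)(x)=-\Gamma(x)$ for $x\ge s_i$. Because $\Gamma(s_i)=0$ this is again a lattice path, it reverses the endpoint height, it leaves the data on $[0,s_i]$ untouched, and since $\Gamma(s_h)\neq 0$ for all $h>i$ the value $\iota(\Gamma)(s_h)=-\Gamma(s_h)$ remains nonzero; hence $\iota$ is an involution whose last vanishing marker is again $s_i$, and it automatically preserves the non-vanishing data defining $\mathcal L_0(T)$ and (for $n$ even, since $[r_t,n]\subseteq[s_i,n]$) $\mathcal L_0'(T)$.

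The main obstacle, and the step requiring the most care, is verifying that $\iota$ maps $\tilde{\mathcal L}(T)$ into itself, i.e. that reflecting the tail $[s_i,n]$ preserves membership in $\mathcal J(T)$. The intervals contained in $[0,s_i]$ are unaffected, so the work is confined to the intervals $(s_h,s_{h+1})$ with $h\ge i$ and to the terminal interval $(s_t,n)$. For each of these I must show that the single $y=-\tfrac12$ crossing of $\Gamma$ is matched by a single $y=+\tfrac12$ crossing, so that after reflection the crossing count is preserved. The decisive point is that the maximality of $i$ forces $\Gamma(s_{i+1})\neq 0$, which excludes exactly the boundary configuration that defeated the global reflection in the first tail interval; for the later intervals the non-vanishing at the markers $s_h$ together with the sparsity of $T$ should pin down the relevant parities and rule out the stray oscillations around $0$ that would otherwise unbalance the two crossing counts. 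Once $\iota$ is shown to be a well-defined endpoint-reversing involution on $\tilde{\mathcal L}(T)$, the equality $B_i=B_{n-i}$ follows, and with the sign computation above this yields $q^n\tilde\Omega_T(1/q)=-\tilde\Omega_T(q)$.
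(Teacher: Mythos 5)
Your reduction is sound: via Corollary \ref{combinter}, the identity $q^n\tilde\Omega_T(1/q)=-\tilde\Omega_T(q)$ is equivalent to the endpoint symmetry $|\{\Gamma\in\tilde{\mathcal L}(T):\Gamma(n)=n-2i\}|=|\{\Gamma\in\tilde{\mathcal L}(T):\Gamma(n)=2i-n\}|$, your parity bookkeeping is correct, and this is exactly the reduction the paper makes. The gap is in the involution $\iota$. Reflecting the entire tail after the last marker $s_i$ with $\Gamma(s_i)=0$ does \emph{not} preserve membership in $\tilde{\mathcal L}(T)$, and the premise you invoke to dismiss this --- that non-vanishing at the markers rules out ``stray oscillations around $0$'' in the tail --- is false: membership in $\tilde{\mathcal L}(T)$ forbids zeros only at the points $r_h=s_h+1$ (and, for $n$ even, on $[r_t,n]$), so the path may well return to $0$ elsewhere in the tail, and reflecting such a segment changes the number of crossings of $y=-\frac{1}{2}$. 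Minimal counterexample: $T=00\in\two^2_s$, so $n=3$, $t=0$, and the only marker is $s_0=0$. Here $\mathcal J(00)=\{00,11\}$ (both $01$ and $10$ have $\partial=\{1,2\}$, violating the parity condition iii)), and $\Gamma=(0,1,0,1)\in\mathcal L(00)=\tilde{\mathcal L}(00)$. Your $\iota$ is then forced to be the global reflection, and $\iota(\Gamma)=(0,-1,0,-1)$ has $N(\iota(\Gamma))=10\notin\mathcal J(00)$, so $\iota(\Gamma)\notin\tilde{\mathcal L}(00)$; the correct partner of $(0,1,0,1)$ is $(0,1,0,-1)$. Condition i) can fail as well: for $T=01000010\in\two^8_s$ ($n=9$, $s_1=2$, $s_2=7$), the path $\Gamma=(0,-1,0,1,0,1,0,-1,-2,-3)$ lies in $\tilde{\mathcal L}(T)$, its last vanishing marker is $s_1=2$, and your $\iota(\Gamma)=(0,-1,0,-1,0,-1,0,1,2,3)$ satisfies $\partial(N(\iota(\Gamma)))\cap(s_1,s_2)=\{3,4,5\}$, of cardinality $3$ instead of $1$.

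The missing idea is the one the paper's (very terse) proof points to: the bijection must be ``constructed as in the proof of Proposition \ref{l0}'', i.e.\ it is a \emph{conditional} reflection, which leaves fixed those portions of the path that are flanked by zeros of $\Gamma$ inside an interval and flips only the unflanked pieces, with the terminal interval $(s_t,n)$ requiring its own treatment since there the relevant constraint is the parity condition iii) rather than the exactly-one-crossing condition i). This is what makes the crossing data survive: between two zeros the number of crossings of $y=-\frac{1}{2}$ is even, so the unique crossing of an interval $(s_h,s_{h+1})$ must sit in an unflanked end-piece, and flipping those pieces preserves its uniqueness. On the second example above this map fixes $\Gamma(5)$ and flips the rest of the tail, producing $(0,-1,0,-1,0,1,0,1,2,3)\in\tilde{\mathcal L}(T)$, a legitimate endpoint-reversed partner. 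So you identified the right statement to prove and the right kind of tool, but the specific involution you define is not well-defined on $\tilde{\mathcal L}(T)$, and the failure occurs exactly at the step you yourself flagged as requiring the most care.
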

\begin{proof}
   A bijection $\phi$ constructed as in the proof of Proposition \ref{l0} shows that 

\[|\{\Gamma\in \tilde {\mathcal L}(T):\Gamma(n)=n-2i\}|=|\{\Gamma\in \tilde {\mathcal L}(T):\Gamma(n)=2i-n\}|.
\]
and the result follows from Corollary \ref{combinter}.
\end{proof}

Let $T\in \two^{n-1}_s$. Note that a lattice path $\Gamma$ is a $T$-slalom if and only if $\Gamma\in\tilde {\mathcal L}(T)$ and $\Gamma(n)>0$. We can now prove Theorem \ref{finalmain}.

{\em Proof of Theorem \ref{finalmain}.}
Corollary \ref{skew} shows that there exists an involution $\phi$ on $\tilde{\mathcal L}(T)$ such that $d_+(\Gamma)=d_-(\phi(\Gamma))$. The result then follows immediately from Proposition \ref{KLOmega} and Corollary \ref{combinter} together with the well-known fact that $\deg P_{u,v}\leq \lfloor\frac{\ell-1}{2}\rfloor$.
$\Box$

We illustrate the preceding theorem with some examples. If $\ell(u,v)=1$
then we have from Theorem \ref{finalmain} and our definitions that 
\[
P_{u,v}(q) 
= q^{-\frac{\ell(\varepsilon)}{2}} \, b_{\varepsilon} \, \Omega_{\varepsilon}(q)
= 1.
\] 
Similarly we obtain
\[
P_{u,v}(q)
= b_0 \, \Omega_0(q) 
= 1 
\]
if $\ell(u,v)=2$ (where we have used the fact that $b_0(u,v)=1$ if $\ell(u,v)=2$) 
and
\begin{align*}
P_{u,v}(q) 
& = b_{00} \, \Omega_{00}(q)+ b_{01} \, \Omega_{01}(q) 
+ q \, b_{\varepsilon} \, \Omega_{\varepsilon}(q) \\
& = b_{00}(1-2q) + b_{01}(q)+q \, b_{\varepsilon} \\
& = 1+q(-2+b_{01}+ b_{\varepsilon}), \\
\end{align*}
if $\ell(u,v)=3$.

We feel that the formula obtained in Theorem \ref{finalmain} is the simplest and most explicit nonrecursive
combinatorial formula known for the Kazhdan-Lusztig polynomials that holds in complete generality since this formula, as the one in \cite[Corollary 3.2]{BB}, expresses the Kazhdan-Lusztig polynomial
of $u,v \in W$ as a sum of at most $f_{\ell(u,v)}$ summands, as opposed to 
$2^{\ell(u,v)}+2^{\ell(u,v)-2}+\cdots$ for the one obtained in \cite[Theorem 7.2]{BreJAMS}, 
each one of which is the product of a number, which depends on $u$, $v$, and $W$, with a
polynomial, that is independent of $u$, $v$, and $W$. However, this formula is more explicit than the
one obtained in \cite[Corollary 3.2]{BB} since in the formula obtained in \cite{BB} the 
polynomials have a combinatorial interpretation, but no combinatorial interpretation is known for 
the numbers, while in the formula obtained in Theorem \ref{finalmain} both the numbers and the
polynomials have a combinatorial interpretation.

\section{Linear relations for Bruhat paths}

The main goal of this section is to study linear relations satisfied by the functions $b(u,v)$ for all $u,v \in W$ and all Coxeter groups $W$.
 As a consequence of our results we show that the formula appearing in Theorem \ref{finalmain} cannot be linearly simplified.

\subsection{Construction of reflection orderings}

We start with a general construction of reflection orderings. Let $(W,S)$ be a Coxeter system, $\Pi$ be the associated set of simple roots, and $\Phi^+=\Phi^+(W)$ the associated set of positive roots. A \emph{weight} function on $\Phi^+$ is a map $p:\Phi^+\rightarrow \mathbb R_{\geq 0}$ which is linear, in the sense that if $\beta=c_1\beta_1+c_2 \beta_2$, with $\beta,\beta_1,\beta_2\in \Phi^+$ and $c_1,c_2\in \mathbb N$, then $p(\beta)=c_1p(\beta_1)+c_2p(\beta_2)$.
It is clear that a weight function $p$ is uniquely determined by its images on $\Pi$ and that the set $\Phi_0^+(p)=\{\beta\in \Phi^+:\, p(\beta)=0\}$ is the set of positive roots of a parabolic subgroup of $W$.
Let $I=(\alpha_1,\ldots,\alpha_l)$ be an indexing (total ordering) of the elements in $\Pi$. 
Then the associated lexicographic order on the root space $\mathbb R\alpha_1\oplus\cdots\oplus \mathbb R\alpha_l$ is given by $\sum {c_i\alpha_i}<\sum d_i\alpha_i$ if $(c_1,\ldots,c_l)$ is smaller than $(d_1,\ldots,d_l)$ lexicographically.

   Let $W$ be a Coxeter group, $p$ be a weight function on $\Phi^+(W)$, and $W'$ the parabolic subgroup of $W$ given by $\Phi^+(W')=\Phi_0^+(p)$. Let $\prec$ be a reflection ordering on $\Phi^+(W')$ and $I$ an indexing of $\Pi$. Then we define a total ordering $\ll$ on $\Phi^+$ depending on $p,\prec, I$ in the following way: for $\beta,\beta'\in \Phi^+$ we let $\beta\ll \beta'$ if one of the following conditions apply:
\begin{itemize}
   \item $p(\beta)=p(\beta')=0$ and $\beta\prec \beta'$;
\item $p(\beta)\neq 0$ and $p(\beta')=0$;
\item $p(\beta),p(\beta')\neq 0$ and $\frac{\beta}{p(\beta)}<\frac{\beta'}{p(\beta')}$ in the lexicographic order associated to $I$.
\end{itemize}
It is clear that $\ll$ is a total ordering on $\Phi^+(W)$.
\begin{prop}
\label{ll}
The total ordering $\ll$ on $\Phi^+(W)$ constructed above is a reflection ordering. 
\end{prop}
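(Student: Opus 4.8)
The plan is to verify directly the defining property of a reflection ordering: given $\alpha,\beta\in\Phi^+$ with $\alpha\ll\beta$ and $\gamma\eqdef c_1\alpha+c_2\beta\in\Phi^+$ for some $c_1,c_2\in\mathbb R_{>0}$, I must show $\alpha\ll\gamma\ll\beta$. The whole argument rests on one elementary observation about the weight: by linearity of $p$ we have $p(\gamma)=c_1p(\alpha)+c_2p(\beta)$, and since $c_1,c_2>0$ and $p$ is nonnegative, $p(\gamma)=0$ if and only if $p(\alpha)=p(\beta)=0$; in particular $\gamma\in\Phi^+(W')=\Phi_0^+(p)$ exactly when both $\alpha$ and $\beta$ do. I will also use two standard facts about the lexicographic order $<$ on the root space associated to $I$: it is compatible with the vector-space structure (so $x<y$ implies $x+z<y+z$, and $cx<cy$ for $c>0$), and every element of $\Phi^+$, being a nonnegative combination of the $\alpha_i$ with some positive coefficient, is lex-positive.

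Next I would split into cases according to the weights of $\alpha$ and $\beta$, noting that the hypothesis $\alpha\ll\beta$ already excludes the possibility $p(\alpha)=0\neq p(\beta)$ (the second clause defining $\ll$ would force $\beta\ll\alpha$). This leaves three cases. If $p(\alpha)=p(\beta)=0$, then $\alpha,\beta,\gamma$ all lie in $\Phi^+(W')$, the relation $\alpha\ll\beta$ means $\alpha\prec\beta$, and since $\prec$ is a reflection ordering on $\Phi^+(W')$ and $\gamma=c_1\alpha+c_2\beta$, we get $\alpha\prec\gamma\prec\beta$, hence $\alpha\ll\gamma\ll\beta$. If $p(\alpha)\neq0=p(\beta)$, then $p(\gamma)=c_1p(\alpha)\neq0$, so $\gamma\ll\beta$ is immediate from the second clause; and since $\frac{\gamma}{p(\gamma)}-\frac{\alpha}{p(\alpha)}=\frac{c_2}{c_1p(\alpha)}\beta$ is a positive multiple of the lex-positive vector $\beta$, the third clause gives $\alpha\ll\gamma$.

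The heart of the matter is the case $p(\alpha),p(\beta)\neq0$, where I expect the only genuine computation. Here $\alpha\ll\beta$ means $\frac{\alpha}{p(\alpha)}<\frac{\beta}{p(\beta)}$, and $p(\gamma)>0$. Writing $s=c_1p(\alpha)>0$ and $t=c_2p(\beta)>0$, one computes
\[
\frac{\gamma}{p(\gamma)}=\frac{s\,\frac{\alpha}{p(\alpha)}+t\,\frac{\beta}{p(\beta)}}{s+t}=\lambda\,\frac{\alpha}{p(\alpha)}+(1-\lambda)\,\frac{\beta}{p(\beta)},\qquad \lambda=\frac{s}{s+t}\in(0,1),
\]
exhibiting $\frac{\gamma}{p(\gamma)}$ as a strict convex combination of $\frac{\alpha}{p(\alpha)}$ and $\frac{\beta}{p(\beta)}$. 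Using the vector-space compatibility of the lexicographic order, such a combination of two distinct lex-comparable vectors lies strictly between them: concretely $\frac{\alpha}{p(\alpha)}-\frac{\gamma}{p(\gamma)}=(1-\lambda)\big(\frac{\alpha}{p(\alpha)}-\frac{\beta}{p(\beta)}\big)$ and $\frac{\gamma}{p(\gamma)}-\frac{\beta}{p(\beta)}=\lambda\big(\frac{\alpha}{p(\alpha)}-\frac{\beta}{p(\beta)}\big)$ are both lex-negative, so $\frac{\alpha}{p(\alpha)}<\frac{\gamma}{p(\gamma)}<\frac{\beta}{p(\beta)}$ and the third clause yields $\alpha\ll\gamma\ll\beta$. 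Since the three cases are exhaustive, $\ll$ is a reflection ordering. The main (and essentially only) obstacle is keeping the bookkeeping of the defining clauses of $\ll$ straight across the cases; once the convex-combination identity above is in hand, each case reduces to a short verification.
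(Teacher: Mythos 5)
Your proof is correct and follows essentially the same route as the paper's: the same three-case split according to the vanishing of $p(\alpha)$ and $p(\beta)$, with the mixed case handled by comparing $\frac{\gamma}{p(\gamma)}$ to $\frac{\alpha}{p(\alpha)}$ and the final case via the same convex-combination identity. You are in fact slightly more explicit than the paper (lex-positivity of roots, translation-invariance of the lexicographic order, and the exclusion of the case $p(\alpha)=0\neq p(\beta)$), but these are details the paper leaves implicit rather than a genuinely different argument.
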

\begin{proof}
   We have to show that if $\beta=c_1\beta_1+c_2 \beta_2$, with $\beta,\beta_1,\beta_2\in \Phi^+$, $c_1,c_2\in \mathbb R_{>0}$, and $\beta_1\ll \beta_2$ then
$\beta_1\ll \beta\ll\beta_2$. 
\begin{itemize}
   \item If $p(\beta_1)=p(\beta_2)=0$ then $\beta_1,\beta_2\in  \Phi^+(W')$ and hence also $\beta\in \Phi^+(W')$; the result follows since $\prec$ is a reflection ordering on $ \Phi^+(W')$;
\item if $p(\beta_1)\neq 0$ and $p(\beta_2)=0$ then $p(\beta)=c_1p(\beta_1)>0$ and in particular $\beta\ll\beta_2$. 
Moreover, if we denote by $x_i(\beta)$ the $i$-th coordinate of $\beta$ with respect to the chosen indexing $I$ on $\Pi$ 
(so $\beta = \sum_{i=1}^{l} x_i(\beta) \alpha_i$) we have
\[
   \frac{x_i(\beta)}{p(\beta)}=\frac{x_i(c_1\beta_1+c_2\beta_2)}{c_1p(\beta_1)}=\frac{c_1x_i(\beta_1)+c_2x_i(\beta_2)}{c_1p(\beta_1)}\geq \frac{x_i(\beta_1)}{p(\beta_1)}.
\]
\item if $p(\beta_1),p(\beta_2)\neq 0$ then
\[
  \frac{x_i(\beta)}{p(\beta)}= \frac{x_i(c_1\beta_1+c_2\beta_2)}{c_1p(\beta_1)+c_2p(\beta_2)}=\frac{c_1p(\beta_1)\frac{x_i(\beta_1)}{p(\beta_1)}+c_2p(\beta_2)\frac{x_i(\beta_2)}{p(\beta_2)}}{c_1p(\beta_1)+c_2p(\beta_2)} 
\]
which shows that $\frac{x_i(\beta)}{p(\beta)}$ is a convex linear combination of $\frac{x_i(\beta_1)}{p(\beta_1)}$ and $\frac{x_i(\beta_2)}{p(\beta_2)}$, 
completing the proof.
\end{itemize}
\end{proof}

Note that Proposition \ref{ll} vastly generalizes Proposition 5.2.1 of 
\cite{BjBr}.

\begin{cor}\label{goodorder}
   Let $(W,S)$ be a Coxeter system and $P$ be the maximal parabolic subgroup generated by $S\setminus\{s\}$, for some $s\in S$. Then there exists a reflection ordering $\ll$ on $ \Phi^+$ such that
\begin{itemize}                                                                                                                                                                 
\item  $t\ll s$ for every reflection $t$ in $W$;
\item if $t$ is a reflection in $P$ then $t\ll sts$; 
\item if $t$ and $t'$ are reflections in $P$ then $t\ll t'$ if and only if $sts\ll st's$.                                                                                                                                                   \end{itemize}
\end{cor}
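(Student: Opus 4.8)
The plan is to produce $\ll$ as the reverse of an ordering furnished by Proposition \ref{ll}. Let $\alpha_s\in\Pi$ be the simple root corresponding to $s$, and take for $p\colon\Phi^+\to\mathbb R_{\geq 0}$ the coefficient of $\alpha_s$ when a root is expanded in the basis $\Pi$. This is a weight function, and since the positive roots with vanishing $\alpha_s$-coefficient are precisely $\Phi^+(P)$, we have $\Phi_0^+(p)=\Phi^+(P)$, so $W'=P$ in the notation of Proposition \ref{ll}. As indexing $I$ of $\Pi$ I would take any total ordering in which $\alpha_s$ comes first. For the auxiliary reflection ordering $\prec$ on $\Phi^+(W')=\Phi^+(P)$ I would apply Proposition \ref{ll} a second time, now inside $P$. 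Writing $t_\gamma$ for the reflection attached to $\gamma$, one has $sts=t_{s(\gamma)}$, and since $s$ moves any root by a multiple of $\alpha_s$, for $\gamma\in\Phi^+(P)$ we may write $s(\gamma)=\gamma+d(\gamma)\alpha_s$, where $d(\gamma)\eqdef p(s(\gamma))\geq 0$ is the $\alpha_s$-coefficient of the positive root $s(\gamma)$. The map $d$ is the restriction to $\Phi^+(P)$ of the linear functional $p\circ s$, hence a weight function on $\Phi^+(P)$; I feed $d$, the indexing of $\Pi\setminus\{\alpha_s\}$ inherited from $I$, and any reflection ordering on the parabolic $\Phi_0^+(d)$ into Proposition \ref{ll} to obtain $\prec$. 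Finally, let $\ll'$ be the reflection ordering on $\Phi^+$ produced by Proposition \ref{ll} from $p,\prec,I$, and set $\ll$ to be its reverse; since reversing a reflection ordering again gives one (immediate from the defining inequality), $\ll$ is a reflection ordering.

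It then remains to verify the three properties. The first is the quickest: $s$ normalises to $\alpha_s=(1,0,\dots,0)$ in the indexing $I$, whereas any $\beta\neq\alpha_s$ with $p(\beta)>0$ normalises to a vector with first coordinate $1$ and some strictly positive later coordinate, and every root with $p=0$ comes after all roots with $p>0$ in $\ll'$. Hence $\alpha_s$ is $\ll'$-least, so $s$ is $\ll$-greatest and $t\ll s$ for all reflections $t\neq s$. For the second property, let $t_\gamma$ be a reflection in $P$ with $sts\neq t_\gamma$; then $d(\gamma)>0$, so $s(\gamma)$ has positive $\alpha_s$-coefficient while $\gamma$ has none. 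Since in $\ll'$ a root with $p\neq 0$ precedes one with $p=0$, $sts$ precedes $t_\gamma$ in $\ll'$, which means $t_\gamma\ll sts$.

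The heart of the argument, and the step I expect to be the main obstacle, is the third property, that $s$-conjugation is order-preserving on $P$-reflections. For $t_\gamma,t_{\gamma'}$ in $P$ the left-hand side $t_\gamma\ll t_{\gamma'}$ unwinds through the reversal to $\gamma'\prec\gamma$ (both roots have $p=0$, so $\ll'$ orders them by $\prec$). The right-hand side $sts\ll st's$ unwinds to the lexicographic comparison of $\frac{s(\gamma')}{d(\gamma')}$ and $\frac{s(\gamma)}{d(\gamma)}$; writing $\frac{s(\gamma)}{d(\gamma)}=\alpha_s+\frac{\gamma}{d(\gamma)}$ and noting that the $\alpha_s$-coordinates tie at $1$, this reduces to the lexicographic comparison of $\frac{\gamma}{d(\gamma)}$ and $\frac{\gamma'}{d(\gamma')}$ in the coordinates indexing $\Pi\setminus\{\alpha_s\}$. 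By the construction of $\prec$ as the Proposition \ref{ll} ordering on $P$ with weight $d$, this comparison is precisely $\gamma'\prec\gamma$ whenever $d(\gamma),d(\gamma')>0$, so the two sides agree.

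The point requiring care is the degenerate reflections, those $t_\gamma$ in $P$ commuting with $s$: here $d(\gamma)=0$ and $s(\gamma)=\gamma$, so $sts=t_\gamma$. Such $\gamma$ lie in $\Phi_0^+(d)$ and are thus $\prec$-maximal among $P$-roots, and a short direct check shows that the equivalence in the third property persists in the mixed and doubly-degenerate cases. Read literally the second property fails for these $t_\gamma$ (it would claim $t_\gamma\ll t_\gamma$); it is to be understood for reflections with $sts\neq t$, just as the first property is understood for $t\neq s$. Combining the three verifications yields the corollary.
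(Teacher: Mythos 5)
Your proof is correct, but it takes a genuinely different route from the paper's. The paper also invokes Proposition \ref{ll}, but with complementary choices: it puts $\alpha_s$ \emph{last} in the indexing and uses the weight $p(\alpha_i)=1$ for $\alpha_i\neq\alpha_s$, $p(\alpha_s)=0$, so that $\Phi_0^+(p)=\{\alpha_s\}$ (hence no auxiliary ordering $\prec$ is needed) and $\alpha_s$ is automatically the maximum, with no reversal required. All three properties then fall out of one observation: writing $\alpha_{sts}=\alpha_t+c\alpha_s$, conjugation by $s$ preserves $p$ and changes only the last coordinate, so the normalized lexicographic comparison of $t$ with $t'$ is decided before the last coordinate and coincides with that of $sts$ with $st's$. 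Your construction instead takes the weight to be the $\alpha_s$-coordinate itself, which forces $\Phi_0^+(p)=\Phi^+(P)$ and hence a second application of Proposition \ref{ll} inside $P$ (with the conjugation weight $d=p\circ s$, which you correctly check is a weight function), followed by reversing the resulting order. This is longer, and it obliges you to treat the degenerate reflections with $sts=t$ by hand --- which you do correctly, and in doing so you make explicit a point the paper glosses over: the second bullet must be read as applying only when $sts\neq t$, just as the paper's own step ``$\alpha_t\ll\alpha_{sts}$'' silently assumes $c>0$. One structural difference in the output is worth noting: in your ordering every reflection of $P$ precedes every reflection not in $P$, whereas in the paper's ordering $t$ and $sts$ are interleaved among the roots with $p\neq 0$; both orderings satisfy the three required properties, which is all that is needed, but they are genuinely different orderings.
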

\begin{proof}
   Consider an indexing $I$ of $\Pi=\{\alpha_1,\ldots,\alpha_l\}$ with $\alpha_l=\alpha_s$ and the weight function given by $p(\alpha_i)=1$ if $i<l$ and $p(\alpha_l)=0$. Let $\ll$ be the reflection ordering constructed above with respect to $p$ and $I$ (there is no choice for $\prec$ in this case).
It is clear that $\alpha_s$ is the maximal element. If $t$ is a reflection in $P$ we have that
\[
   \alpha_{sts}=s(\alpha_t)=\alpha_t+c\alpha_l,
\]
for some nonnegative integer $c$. In particular $p(\alpha_{sts})=p(\alpha_t)$ and,
by Proposition \ref{ll}, $\alpha_t\ll \alpha_{sts} \ll \alpha_s$.
Now let $t,t'$ be reflections in $P$. We clearly have $p(t),p(t')\neq 0$. Since $p(\alpha_{sts})=p(\alpha_t)$ and  all the coordinates but the last one of $\alpha_t$ and $\alpha_{sts}$ coincide (and similarly for $t'$) we deduce that $t\ll t'$ if and only if $sts\ll st's$.

\end{proof}

\subsection{The pyramid over a Bruhat interval}
Let $(W,S)$ be a Coxeter system and $[u,v]$ be an interval in $W$. We say that an interval $[u,vs]$ is a \emph{pyramid} over $[u,v]$ if $s\in S$ and $s\not\leq v$. The name pyramid comes from the fact that if $[u,v]$ is isomorphic as a poset to the face lattice of a polytope $P$ then $[u,vs]$ is isomorphic to the face lattice of a pyramid over $P$.

The following result states that the complete $cd$-index of a pyramid over a Bruhat interval does not depend on $s$, generalizes \cite{ER} and expresses the complete $cd$-index of the pyramid $[u,vs]$ in terms of the complete $cd$-index of $[u,v]$ and of smaller intervals.

\begin{prop}\label{pyr}
   Let $[u,v]$ be a Bruhat interval and $[u,vs]$ be a pyramid over $[u,v]$. Then
\[
   \tilde \Psi_{u,vs}=\frac{1}{2}\Big(\tilde \Psi_{u,v}c+c\tilde \Psi_{u,v}+\sum_{x\in(u,v)}\tilde \Psi_{u,x}d\tilde \Psi_{x,v}\Big).
\]
In particular $\tilde \Psi_{u,vs}$ does not depend on $s$.
\end{prop}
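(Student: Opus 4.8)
The plan is to compute $\tilde\Psi_{u,vs}=\sum_{\Delta\in B(u,vs)}m(\Delta)$ directly from the geometry of the pyramid and then to symmetrize. First I would record the poset structure of $[u,vs]$. Since $s\not\le v$, the subword property shows that every $x\le v$ lies in the parabolic $P=\langle S\setminus\{s\}\rangle$ and satisfies $xs>x$, whence $xs\not\le v$; and the standard lifting properties of the Bruhat order (see \cite[\S2.2]{BjBr}) give, for $x,y\le v$, the equivalences $x\le y\Leftrightarrow x\le ys\Leftrightarrow xs\le ys$, together with the fact that every $z\in[u,vs]$ is either $\le v$ or of the form $xs$ with $x\le v$. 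Hence $[u,vs]$ splits as a disjoint union of a \emph{bottom} copy $[u,v]$ and a \emph{top} copy $\{xs:x\in[u,v]\}$, the map $x\mapsto xs$ being a poset isomorphism onto the top, with no top element lying below a bottom one.

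Next I would classify the edges of the Bruhat graph on $[u,vs]$. Bottom-to-bottom edges are exactly the edges of $B(u,v)$, carrying a reflection $t\in T\cap P$; top-to-top edges $xs\to ys$ correspond bijectively to bottom edges $x\to y$ and carry the conjugate reflection $sts$. The crucial structural point is that every bottom-to-top edge is \emph{vertical}, i.e. of the form $x\to xs$ with reflection $s$; equivalently, if $x,y\le v$ and $x\to ys$ is an edge then $y=x$. I expect this to be the main obstacle. It reduces to the claim that for $g\in P$ one has $gs\in T$ only if $g=e$: writing $gs=\tau_\gamma$ and using that $g$ preserves the $\alpha_s$-coordinate and sends $\alpha_s$ to a positive root, a computation in the geometric representation forces $\gamma=\alpha_s$ (hence $g=e$), using positive-definiteness of the Coxeter form in the finite case and the discreteness of the root system in general.

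Granting this, I would choose the reflection ordering $\ll$ of Corollary \ref{goodorder}, for which $s$ is the maximal reflection and $t\ll t'\Leftrightarrow sts\ll st's$ for $t,t'\in T\cap P$. The latter property shows that a top path $xs\to\cdots\to vs$ carries the same $m$-monomial as the corresponding bottom path in $B(u,v)$. Every $\Delta\in B(u,vs)$ has a unique vertical crossing edge, and I would split the sum by its position. If the crossing is first (at $u$), the factor it creates with the following top edge is $b$ (as $s$ is maximal), giving $b\,\tilde\Psi_{u,v}$; if it is last (at $v$), the factor with the preceding bottom edge is $a$, giving $\tilde\Psi_{u,v}\,a$; for an interior crossing at $x\in(u,v)$ the two surrounding factors are $a$ then $b$, giving $\tilde\Psi_{u,x}\,ab\,\tilde\Psi_{x,v}$. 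Summing and applying Corollary \ref{der} with $y=ab$ yields
\[
\tilde\Psi_{u,vs}=b\,\tilde\Psi_{u,v}+\tilde\Psi_{u,v}\,a+\sum_{x\in(u,v)}\tilde\Psi_{u,x}\,ab\,\tilde\Psi_{x,v}.
\]

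Finally, since $\tilde\Psi_{u,vs}$ is independent of the chosen reflection ordering, I would run the identical argument with the opposite ordering $\ll^{\mathrm{op}}$ (again a reflection ordering, now making $s$ minimal and still satisfying the conjugation property), which reverses every factor above and produces
\[
\tilde\Psi_{u,vs}=a\,\tilde\Psi_{u,v}+\tilde\Psi_{u,v}\,b+\sum_{x\in(u,v)}\tilde\Psi_{u,x}\,ba\,\tilde\Psi_{x,v}.
\]
Averaging the two expressions, recalling $c=a+b$ and $d=ab+ba$ and invoking Corollary \ref{der} once more, gives
\[
\tilde\Psi_{u,vs}=\tfrac12\Big(\tilde\Psi_{u,v}\,c+c\,\tilde\Psi_{u,v}+\sum_{x\in(u,v)}\tilde\Psi_{u,x}\,d\,\tilde\Psi_{x,v}\Big),
\]
as desired; the independence from $s$ is then manifest, since $s$ no longer appears on the right-hand side.
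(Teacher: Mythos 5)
Your proposal is correct and is essentially the paper's own argument: decompose each path in $B(u,vs)$ at its unique vertical edge $x\to xs$, evaluate the monomials with the ordering $\ll$ of Corollary \ref{goodorder} to get $b\,\tilde\Psi_{u,v}+\tilde\Psi_{u,v}\,a+\sum_{x\in(u,v)}\tilde\Psi_{u,x}\,ab\,\tilde\Psi_{x,v}$, then redo the computation with a second reflection ordering in which $s$ is minimal and average (you use the reverse of $\ll$, the paper uses the lower $s$-conjugate $\ll_s$; both are legitimate since $\tilde\Psi$ does not depend on the ordering). The one place you go beyond the paper is in making explicit the structural fact that all bottom-to-top edges are vertical --- the paper assumes this silently --- and your reduction of it to the claim that $g\in P$, $gs\in T$ imply $g=e$ is right, but your sketched root-space computation does not establish that claim (testing both sides of $gs=\tau_\gamma$ on $\alpha_s$ alone does not pin down $\gamma$, and positive-definiteness is unavailable for general $W$); a clean proof uses inversion sets $N(w)=\{t\in T:\ell(tw)<\ell(w)\}$ and the cocycle rule, which give $N(gs)=N(g)\sqcup\{gsg^{-1}\}$, so that the reflection $gs$, lying in $N(gs)$ but not in $N(g)\subseteq P$, must equal $gsg^{-1}$, forcing $g=e$.
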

\begin{proof}
We start with an observation. If $x<v$ then any path $\Delta$ in the Bruhat graph $\Delta=(x_0\stackrel{t_1}{\longrightarrow} x_1\stackrel{t_2}{\longrightarrow}\cdots \stackrel{t_{r+1}}{\longrightarrow}x_{r+1})$ from $x$ to $v$ corresponds to a path 
$\Delta'=(x_0s\stackrel{st_1s}{\longrightarrow} x_1s\stackrel{st_2s}{\longrightarrow}\cdots \stackrel{st_{r+1}s}{\longrightarrow}x_{r+1}s)$ from $xs$ to $vs$. This correspondence is a bijection between paths from $x$ to $v$ and paths from $xs$ to $vs$; 
moreover, if we consider the reflection ordering $\ll$ defined in Corollary \ref{goodorder}, we have that if $\Delta$ corresponds to $\Delta'$ in this correspondence then $m_\ll(\Delta)=m_\ll(\Delta')$.
We also observe that if we consider the \emph{lower $s$-conjugate} $\ll_s$ of $\ll$ (see \cite[Proposition 5.2.3]{BB}) given in this case by $r\ll_sr'$ if and only if either $r=s$ or $srs\ll sr's$, we still obtain $m_{\ll_s}(\Delta)=m_{\ll_s}(\Delta')$.

   Given a path $\Delta=(u_0\stackrel{t_1}{\longrightarrow} u_1\stackrel{t_2}{\longrightarrow}\cdots \stackrel{t_{r+1}}{\longrightarrow}u_{r+1})$ from $u$ to $vs$ there exists a unique $x\in[u,v]$ such that $u_i=x$ and $u_{i+1}=x s$. 
We denote it by $x(\Delta)$ and in the computation of the complete $cd$-index
\[
   \tilde \Psi_{u,vs}=\sum_{\Delta\in B(u,vs)}m(\Delta)
\]
we split the sum on the right-hand side according to $x(\Delta)$. We first consider the reflection ordering $\ll$. In this case we have
\[
   \sum_{\{ \Delta\in B(u,vs):\,x(\Delta)=u \}}m_\ll(\Delta)=b\cdot \sum_{\Delta'\in B(us,vs)}m_\ll(\Delta')=b\cdot \sum_{\Delta\in B(u,v)}m_\ll(\Delta)=b\cdot \tilde \Psi_{u,v}.
\]
and
\[
 \sum_{\{ \Delta\in B(u,vs):\,x(\Delta)=v \}}m_\ll(\Delta)=\sum_{\Delta\in B(u,v)}m_\ll(\Delta)\cdot a=\tilde \Psi_{u,v}\cdot a,
\]
where we have used the fact that $s$ is the maximal reflection in the ordering $\ll$ and the observation at the beginning of the present proof.
If $x \in (u,v)$ we have
\[
\sum_{\{ \Delta\in B(u,vs):\,x(\Delta)=x \}}m_\ll(\Delta)=\sum_{\Delta\in B(u,x)}m_\ll(\Delta)\cdot ab\cdot \sum_{\Delta'\in B(xs,vs)}m_\ll(\Delta')=\tilde \Psi_{u,x}ab\tilde \Psi_{x,v}
\]
and we conclude that
\[
\tilde \Psi_{u,vs}=b\cdot \tilde \Psi_{u,v}+\tilde \Psi_{u,v}\cdot a+\sum_{x\in(u,v)}\tilde \Psi_{u,x}ab\tilde \Psi_{x,v}.
\]
By reasoning in a similar way with the ordering $\ll_s$ we can obtain the analogous formula
\[
\tilde \Psi_{u,vs}=a\cdot \tilde \Psi_{u,v}+\tilde \Psi_{u,v}\cdot b+\sum_{x\in(u,v)}\tilde \Psi_{u,x}ba\tilde \Psi_{x,v}
\]
and the result follows by ``averaging'' these two expressions for $\tilde \Psi_{u,vs}$.
\end{proof}

Consider the derivation $D_d$ on $\mathcal A$. One easily checks that $D_d$ restricts to a derivation on the space of $cd$-polynomials as
$\delta(c)=\delta(a+b)=2 (1\otimes 1)$ and so $D_d(c)=2d$ and $\delta(d)=\delta(ab+ba)=a\otimes 1+1\otimes b +b\otimes 1+1\otimes a$ and so $D_d(d)=ad+db+bd+da=dc+cd$. Corollary \ref{der} and Proposition \ref{pyr} therefore allow us to write
\[
   \tilde \Psi_{u,vs}=\frac{1}{2}\Big(\tilde \Psi_{u,v}c+c\tilde \Psi_{u,v}+D_d(\tilde \Psi_{u,v})\Big)
\]
which shows that $\tilde \Psi_{u,vs}$ depends on $\tilde \Psi_{u,v}$ only. 
Let $G'$ be the derivation on $\mathcal A$ given by $G'(a)=ab$ and $G'(b)=ba$ so that $G'(c)=d$ and $G'(d)=dc$. 
The next result is then a consequence of \cite[Lemma 5.1 and Theorem 5.2]{ER}. 

\begin{cor}\label{finalpyr}
Let $(W,S)$ be a Coxeter system, $u,v \in W$, $u < v$, and $s\in S$ 
be such that $s\not\leq v$. Then
\end{cor}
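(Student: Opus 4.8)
The plan is to establish the closed form
\[
   \tilde \Psi_{u,vs} = c\,\tilde \Psi_{u,v} + G'(\tilde \Psi_{u,v}),
\]
taking as starting point the expression
\[
   \tilde \Psi_{u,vs} = \frac{1}{2}\bigl(\tilde \Psi_{u,v}\,c + c\,\tilde \Psi_{u,v} + D_d(\tilde \Psi_{u,v})\bigr)
\]
derived just above from Proposition \ref{pyr} and Corollary \ref{der}. Thus everything reduces to the purely algebraic claim that, for every $w \in \mathcal A$,
\[
   \frac{1}{2}\bigl(w c + c w + D_d(w)\bigr) = c w + G'(w),
\]
equivalently $D_d(w) - 2G'(w) = c w - w c$. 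Establishing this identity and then applying it to $w = \tilde\Psi_{u,v}$ gives the corollary, and this is precisely the route supplied by \cite[Lemma 5.1 and Theorem 5.2]{ER}.

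The key observation I would use is that both sides of the last displayed equation are derivations of $\mathcal A$ in the argument $w$: the left-hand side is $D_d - 2G'$, a difference of derivations, while $w \mapsto c w - w c$ is the inner derivation $\mathrm{ad}_c$. Since $\mathcal A = \mathbb Z\langle a,b\rangle$ is free on $a,b$, a derivation is determined by its values on the generators, so it suffices to check equality on $a$ and $b$. Using $D_d(a) = D_d(b) = d = ab + ba$ together with $G'(a) = ab$ and $G'(b) = ba$, one finds $(D_d - 2G')(a) = ba - ab = c a - a c$ and $(D_d - 2G')(b) = ab - ba = c b - b c$, matching $\mathrm{ad}_c$ on the generators; this is the content of \cite[Lemma 5.1]{ER}. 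Substituting $D_d(w) = 2G'(w) + c w - w c$ into the symmetrized expression then collapses it to $c w + G'(w)$, recovering \cite[Theorem 5.2]{ER} in this setting.

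The main obstacle is not the computation but fixing the chirality consistently. Because the paper's convention gives $G'(d) = dc$ rather than $cd$, the clean single-sided form must pair \emph{left} multiplication by $c$ with $G'$; the mirror derivation $G''$ determined by $G''(a) = ba$, $G''(b) = ab$ would instead pair with right multiplication, and the two presentations agree only through the relation $G'' - G' = \mathrm{ad}_c$ (equivalently $D_d = G' + G''$). Pinning down this orientation is exactly what ties the formula to the choice, made in Corollary \ref{goodorder}, of $s$ as the maximal reflection in the ordering $\ll$, since that choice dictated the asymmetric placement of the factor $c$ in the two expressions for $\tilde\Psi_{u,vs}$ averaged in Proposition \ref{pyr}. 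Once the orientation is fixed, the remaining verification is the routine generator computation indicated above.
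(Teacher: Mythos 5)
Your proof is correct and takes essentially the same route as the paper: the paper derives the symmetrized expression $\tilde \Psi_{u,vs}=\frac{1}{2}\big(\tilde \Psi_{u,v}c+c\tilde \Psi_{u,v}+D_d(\tilde \Psi_{u,v})\big)$ and then simply cites \cite[Lemma 5.1 and Theorem 5.2]{ER} for the conversion to $c\tilde\Psi_{u,v}+G'(\tilde\Psi_{u,v})$, and your verification of $D_d-2G'=\mathrm{ad}_c$ on the free generators $a,b$ (all three maps being derivations of $\mathbb{Z}\langle a,b\rangle$) is precisely the algebra behind that citation. One small caveat on your closing remark: the chirality of the final formula is not actually tied to the ordering choice in Corollary \ref{goodorder} --- since $cw+G'(w)=wc+G''(w)$ identically (with $G''$ your mirror derivation), the single-sided form follows from the left-right symmetric averaged expression by pure algebra, independent of any such choice.
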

\[
   \tilde \Psi_{u,vs}=c\tilde \Psi_{u,v}+G'(\tilde \Psi_{u,v}).
\]

Similarly, one can prove the following ``left version'' of Corollary 
\ref{finalpyr}.
\begin{cor}\label{finalpyrl}
Let $(W,S)$ be a Coxeter system, $u,v \in W$, $u < v$, and $s\in S$ 
be such that $s\not\leq v$. Then
\end{cor}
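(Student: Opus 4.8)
The plan is to prove the left-handed analogue of Corollary~\ref{finalpyr}, namely that
\[
   \tilde \Psi_{u,vs}=\tilde \Psi_{u,v}\,c+G''(\tilde \Psi_{u,v}),
\]
where $G''$ is the derivation on $\mathcal A$ determined by $G''(a)=ba$ and $G''(b)=ab$, so that $G''(c)=d$ and $G''(d)=cd$. The point is that this is the mirror image of the right-handed version: one reads it off from the symmetric formula of Proposition~\ref{pyr} in exactly the same manner, but collecting the extra factor $c$ on the right rather than on the left, and correspondingly using the mirrored derivation $G''$ in place of $G'$.

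First I would recall, precisely as in the discussion preceding Corollary~\ref{finalpyr}, that Proposition~\ref{pyr} together with $D_d(c)=2d$ and $D_d(d)=dc+cd$ yields
\[
   \tilde \Psi_{u,vs}=\tfrac12\bigl(\tilde \Psi_{u,v}\,c+c\,\tilde \Psi_{u,v}+D_d(\tilde \Psi_{u,v})\bigr).
\]
Subtracting $\tilde \Psi_{u,v}\,c$ from both sides reduces the assertion to the purely algebraic identity
\[
   G''(\Psi)=\tfrac12\bigl(c\,\Psi-\Psi\,c+D_d(\Psi)\bigr),
\]
to be applied to $\Psi=\tilde \Psi_{u,v}$, which is a polynomial in $c$ and $d$ because it is a complete $cd$-index.

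To establish this identity I would note that both sides are derivations of $\mathcal A$: the left-hand side by the very definition of $G''$, and the right-hand side because $\Psi\mapsto c\,\Psi-\Psi\,c=\mathrm{ad}_c(\Psi)$ and $D_d$ are derivations, hence so is $\tfrac12(\mathrm{ad}_c+D_d)$. Since $\tilde \Psi_{u,v}$ lies in the subalgebra $\mathbb Q\langle c,d\rangle$, which is freely generated by $c$ and $d$, and since two derivations that agree on a set of algebra generators agree on the whole subalgebra they generate, it suffices to check the identity on the generators $c$ and $d$. This check is immediate: $G''(c)=d=\tfrac12(0+2d)$ and $G''(d)=cd=\tfrac12(cd-dc+dc+cd)$. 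Combining the two displayed equations then gives the claim.

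I do not anticipate a genuine difficulty here, as the geometric content is already packaged in Proposition~\ref{pyr} (including the treatment of the lower $s$-conjugate ordering $\ll_s$), leaving only the routine derivation bookkeeping above. The one shortcut I would deliberately avoid is deducing the left version from the right one through the word-reversal anti-automorphism of $\mathcal A$: although that map fixes $c$ and $d$ and interchanges $G'$ with $G''$, identifying the reversal of $\tilde \Psi_{u,v}$ with an honest complete $cd$-index would require a poset-duality argument unavailable for general, and in particular infinite, Coxeter groups, so the direct derivation computation is both cleaner and valid in full generality.
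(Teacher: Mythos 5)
There is a genuine gap here, and it is one of interpretation rather than of computation: you have proved a different statement from the one the corollary makes. In the paper's source the displayed conclusion of Corollary \ref{finalpyrl} (it sits just after the corollary environment, which is why it was missing from the statement you were given) reads
\[
\tilde \Psi_{u,sv}=c\,\tilde \Psi_{u,v}+G'(\tilde \Psi_{u,v}),
\]
so the ``left version'' refers to multiplying the group element $v$ on the \emph{left} by the simple reflection $s$, i.e.\ to the pyramid $[u,sv]$; the right-hand side is literally the same as in Corollary \ref{finalpyr}, with $c$ on the left and with $G'$, not a mirrored derivation. Your identity $\tilde \Psi_{u,vs}=\tilde \Psi_{u,v}\,c+G''(\tilde \Psi_{u,v})$ is correct as algebra, but it concerns the same interval $[u,vs]$ already treated in Corollary \ref{finalpyr}, and it is equivalent to that corollary by pure manipulation inside $\mathcal A$: on $\mathbb Q\langle c,d\rangle$ one has $G''-G'=\mathrm{ad}_c$ (check on generators: both send $c\mapsto 0$ and $d\mapsto cd-dc$), so your formula is just a rearrangement of \ref{finalpyr}. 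In particular $\tilde \Psi_{u,sv}$ never appears anywhere in your argument, and no identity among derivations of $\mathcal A$ can produce a statement about it.

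The missing content is combinatorial, not algebraic. Proposition \ref{pyr} is proved only for right pyramids $[u,vs]$, so before any derivation bookkeeping one must establish its left analogue
\[
\tilde \Psi_{u,sv}=\frac{1}{2}\Big(\tilde \Psi_{u,v}c+c\tilde \Psi_{u,v}+\sum_{x\in(u,v)}\tilde \Psi_{u,x}d\,\tilde \Psi_{x,v}\Big),
\]
which is what the paper's phrase ``Similarly, one can prove'' alludes to. This is done by repeating the proof of Proposition \ref{pyr} with left cosets in place of right cosets: every path in $B(u,sv)$ crosses from $[u,v]$ to $s[u,v]$ at a unique element $x(\Delta)$; left translation $x\mapsto sx$ carries paths from $x$ to $v$ to paths from $sx$ to $sv$ (this uses $s\not\leq v$ and the lifting property of Bruhat order); and with the descent conventions written in terms of left multipliers, the reflection-ordering construction of Corollary \ref{goodorder} and the splitting argument of Proposition \ref{pyr} go through with only notational changes. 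Only after this does the $D_d$/$G'$ computation --- the part you did carry out --- yield the corollary via Corollary \ref{der}. Your closing paragraph shows exactly where the misreading occurred: the left/right distinction lives in the Coxeter group, not in the word order of $\mathbb Z\langle a,b\rangle$, so the anti-automorphism of $\mathcal A$ that you were careful to avoid was never relevant in the first place.
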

\[
   \tilde \Psi_{u,sv}=c\tilde \Psi_{u,v}+G'(\tilde \Psi_{u,v}).
\]

\subsection{3-complete Coxeter systems}
Let $(W,S)$ be the Coxeter system of rank $l$ such that $m(s,s')=3$ for all $s,s'\in S$, $s\neq s'$. 
We call this the 3-complete Coxeter system (or group) of rank $l$.

Our first result can be interpreted as a concrete criterion to determine the set of (left) descents of a generic element in a 3-complete Coxeter group: it is used in the sequel in the construction of reflection orderings, but is interesting in its own right.

Let $(W,S)$ be a 3-complete Coxeter system of rank $l$, $S=\{s_1,\ldots,s_l\}$ and let $W'$ be the parabolic subgroup of $W$ generated by $S\setminus\{s_1\}$ (note that $W'$ is a 3-complete Coxeter group of rank $l-1$). 
We also let $\Pi=\{\alpha_1,\ldots,\alpha_l\}$ where $\alpha_i$ is the simple root corresponding to $s_i$ for all $i\in[l]$.
We observe that
\[
   s_i(\alpha_j)=\begin{cases}
                    \alpha_i+\alpha_j&\textrm{if $i\neq j$;}\\ 
                    -\alpha_i &\textrm{if $i=j$.}
                 \end{cases}
\]
We consider the $W'$-orbit of the simple root $\alpha_1$ to study the sets of left descents of elements in $W'$. 
We adopt the following notation: for all $w\in W'$ we let $c_i(w), d_i(w)\in \mathbb Z$, $i\in [l]$, be given by 
\[w(\alpha_1)=\sum_{i=1}^{l}c_i(w)\alpha_i
\]
and $d_i(w)\eqdef 2c_i(w)-\sum_{k\neq i}c_k(w)$.
Before proving the main result about the coefficients $d_i(w)$ we need the following preliminary result.
\begin{lem}\label{recursiondi}
   Let $u\in W'$. Then for all $i,j\in [2,l]$, $i\neq j$, we have
\begin{enumerate}
\item[(a)] $d_i(s_iu)=-d_i(u)$;
\item[(b)] $d_j(s_iu)=d_i(u)+d_j(u)$.
\end{enumerate}
\end{lem}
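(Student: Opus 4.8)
The plan is to reduce everything to a single scalar invariant and then track how it transforms under left multiplication by $s_i$. The key observation is that if we set $\sigma(w)\eqdef\sum_{k=1}^{l}c_k(w)$, then the definition of $d_m$ can be rewritten as $d_m(w)=3c_m(w)-\sigma(w)$ for every $m\in[l]$, since $d_m(w)=2c_m(w)-\sum_{k\neq m}c_k(w)=2c_m(w)-(\sigma(w)-c_m(w))$. Thus the whole lemma becomes a statement about how the individual coordinates $c_m$ and their sum $\sigma$ behave under the action of $s_i$, and the bookkeeping in terms of $\sigma$ is what makes the argument transparent. Note that since $i\in[2,l]$ we have $s_i\in W'$, so $s_iu\in W'$ and all the quantities $c_k(s_iu)$, $d_k(s_iu)$ are defined.

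First I would compute the coordinates of $s_iu(\alpha_1)$ directly from the displayed action of $s_i$ on the simple roots. Writing $u(\alpha_1)=\sum_k c_k(u)\alpha_k$ and applying $s_i$ termwise gives
\[
s_iu(\alpha_1)=-c_i(u)\alpha_i+\sum_{k\neq i}c_k(u)(\alpha_i+\alpha_k).
\]
Reading off coefficients yields $c_m(s_iu)=c_m(u)$ for every $m\neq i$, while the $\alpha_i$-coordinate collects one contribution from each term, giving $c_i(s_iu)=-c_i(u)+\sum_{k\neq i}c_k(u)=\sigma(u)-2c_i(u)$.

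Next I would record the effect on $\sigma$. Since $\sigma$ is just the sum of all coordinates, and the coordinates of $s_i(\alpha_k)$ sum to $2$ when $k\neq i$ and to $-1$ when $k=i$, we get $\sigma(s_iu)=2(\sigma(u)-c_i(u))-c_i(u)=2\sigma(u)-3c_i(u)$, which, using $3c_i(u)=d_i(u)+\sigma(u)$, simplifies to $\sigma(s_iu)=\sigma(u)-d_i(u)$. With these two transformation rules in hand, both parts follow by substituting into $d_m=3c_m-\sigma$: for part (b), $d_j(s_iu)=3c_j(u)-\sigma(s_iu)=3c_j(u)-\sigma(u)+d_i(u)=d_j(u)+d_i(u)$; and for part (a), $d_i(s_iu)=3c_i(s_iu)-\sigma(s_iu)=3(\sigma(u)-2c_i(u))-(\sigma(u)-d_i(u))=2\sigma(u)-6c_i(u)+d_i(u)=-d_i(u)$, the last equality again using $6c_i(u)=2d_i(u)+2\sigma(u)$.

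There is no serious obstacle here: the entire argument is a short linear-algebra computation once one adopts the reformulation $d_m=3c_m-\sigma$. The only point requiring a moment's care is keeping track of the $\alpha_i$-coordinate, which picks up a contribution from \emph{every} other coordinate of $u(\alpha_1)$ because $s_i(\alpha_k)$ contains $\alpha_i$ for each $k\neq i$; passing to $\sigma$ is precisely what lets one avoid expanding these contributions more than once.
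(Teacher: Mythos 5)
Your proof is correct and takes essentially the same approach as the paper: both arguments rest on the coordinate transformation rule $c_m(s_iu)=c_m(u)$ for $m\neq i$ and $c_i(s_iu)=\sum_{k\neq i}c_k(u)-c_i(u)$ (the paper's Eq.~\eqref{ciu}), followed by direct substitution into the definition of $d$. Your reformulation $d_m=3c_m-\sigma$ together with the rule $\sigma(s_iu)=\sigma(u)-d_i(u)$ is just a tidier bookkeeping of the same linear computation, not a different argument.
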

\begin{proof}
  We first observe that for all $u\in W'$ we have
\begin{equation}\label{ciu}
   c_i(s_ju)=\begin{cases}c_i(u),& \textrm{if $i\neq j$;}\\ \sum_{k\neq i}c_k(u)-c_i(u), &  \textrm{if $i=j$.}   \end{cases}
\end{equation}
 (a) We have
\[
   d_i(s_iu)=2c_i(s_iu)-\sum_{k\neq i}c_k(s_iu)= 2\sum_{k\neq i}c_k(u)-2c_i(u)-\sum_{k\neq i}c_k(u)=-d_i(u).
\]

(b) We have
\begin{align*}
   d_j(s_iu)&=2c_j(s_iu)-\sum_{k\neq i,j}c_k(s_iu)-c_i(s_iu)=2c_j(u)-\sum_{k\neq i,j}c_k(u)-\sum_{k\neq i}c_k(u)+c_i(u)\\
&=2c_j(u)-\sum_{k\neq j}c_k(u)+2c_i(u)-\sum_{k\neq i}c_k(u)=d_i(u)+d_j(u).
\end{align*}
\end{proof}
For $w\in W$ we let $\Des_L(w)\eqdef\{i\in [l]:\, s_iw<w\}$.
\begin{prop}\label{dides}
   Let $w\in W'$ and $i\in [2,l]$. Then $d_i(w)\neq 0$ and
\[
  d_i(w)>0\Leftrightarrow  i\in \Des_L(w).
\]
\end{prop}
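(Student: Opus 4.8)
The plan is to give the coefficients $d_i(w)$ a geometric meaning by means of the standard $W$-invariant bilinear form $B$ on the root space, normalized by $B(\alpha_k,\alpha_k)=1$ and $B(\alpha_k,\alpha_m)=-\tfrac12$ for $k\neq m$ (which reproduces the reflection action recorded above, since $s_k(\lambda)=\lambda-2B(\lambda,\alpha_k)\alpha_k$). First I would write $w(\alpha_1)=\sum_k c_k(w)\alpha_k$ and compute
\[
B(w(\alpha_1),\alpha_i)=c_i(w)-\tfrac12\sum_{k\neq i}c_k(w)=\tfrac12\,d_i(w),
\]
so that $d_i(w)=2B(w(\alpha_1),\alpha_i)$. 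This reduces the sign of $d_i(w)$ to the sign of a pairing, which is the quantity I will control.

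Next I would use the $W$-invariance of $B$ to rewrite $d_i(w)=2B(\alpha_1,w^{-1}(\alpha_i))$. The key structural input is that, since $w\in W'=\langle s_2,\ldots,s_l\rangle$ and $i\in[2,l]$, the vector $w^{-1}(\alpha_i)$ is a root of $W'$, and every root of $W'$ lies in the span of $\alpha_2,\ldots,\alpha_l$ (each generator $s_k$ with $k\geq2$ preserves this span). Writing $w^{-1}(\alpha_i)=\sum_{k\geq2}e_k\alpha_k$ and using $B(\alpha_1,\alpha_k)=-\tfrac12$ for $k\geq2$, I then get the clean identity
\[
d_i(w)=2B\Big(\alpha_1,\sum_{k\geq2}e_k\alpha_k\Big)=-\sum_{k\geq2}e_k=-\operatorname{ht}\big(w^{-1}(\alpha_i)\big),
\]
where $\operatorname{ht}$ denotes the sum of the coordinates (the height) of a root.

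With this identity the result is immediate. Since $w^{-1}(\alpha_i)$ is a root, it is either positive (all $e_k\geq0$, not all zero) or negative (all $e_k\leq0$, not all zero); in either case its height is a nonzero integer, so $d_i(w)\neq0$. Moreover $d_i(w)>0$ exactly when $\operatorname{ht}(w^{-1}(\alpha_i))<0$, i.e. exactly when $w^{-1}(\alpha_i)$ is a negative root, which by the standard left-descent criterion ($s_iw<w\Leftrightarrow w^{-1}(\alpha_i)<0$) is precisely the condition $i\in\Des_L(w)$. This proves both assertions simultaneously.

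I expect the main obstacle to be the nonvanishing $d_i(w)\neq0$ rather than the sign comparison. Indeed, one could instead argue by induction on $\ell(w)$, writing $w=s_ju$ with $j\in\Des_L(w)$ and invoking Lemma \ref{recursiondi}: the case $i=j$ gives $d_j(w)=-d_j(u)>0$ directly, but the case $i\neq j$ yields $d_i(w)=d_i(u)+d_j(u)$ with $d_j(u)<0$, and here it is not transparent that the sum is nonzero or of the predicted sign. The height identity above is exactly what resolves this, since $w^{-1}(\alpha_i)=u^{-1}(\alpha_i+\alpha_j)=u^{-1}(\alpha_i)+u^{-1}(\alpha_j)$ is visibly a single root and hence sign-definite; so I would prefer the geometric route, which bypasses the delicate descent bookkeeping that the inductive argument would otherwise require.
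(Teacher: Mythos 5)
Your proof is correct, and it takes a genuinely different route from the paper. The paper proves Proposition \ref{dides} by induction on $\ell(w)$ using Lemma \ref{recursiondi}: the case $i\in\Des_L(w)$ is immediate from part (a), while the case $i\notin\Des_L(w)$ splits further, and the delicate subcase you identified (where $w=s_ju$ with $j\in\Des_L(w)$ but $i\in\Des_L(u)$) is handled by writing $w=s_js_i\tilde u$ with $\ell(w)=\ell(\tilde u)+2$, arguing via the exchange property (and, implicitly, the braid relation $m(s_i,s_j)=3$) that $j\notin\Des_L(\tilde u)$, and then telescoping the recursion to get $d_i(w)=d_j(\tilde u)<0$. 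Your argument instead bypasses the induction entirely by exhibiting the closed-form identity $d_i(w)=2B(w(\alpha_1),\alpha_i)=-\mathrm{ht}\bigl(w^{-1}(\alpha_i)\bigr)$, after which both assertions are instances of two standard facts about the geometric representation: every root is positive or negative (so its height is a nonzero quantity of definite sign), and $i\in\Des_L(w)$ if and only if $w^{-1}(\alpha_i)$ is negative. The computations supporting this are right: the form with $B(\alpha_k,\alpha_k)=1$, $B(\alpha_k,\alpha_m)=-\tfrac12$ reproduces the paper's action $s_i(\alpha_j)=\alpha_i+\alpha_j$, the invariance step $B(w(\alpha_1),\alpha_i)=B(\alpha_1,w^{-1}(\alpha_i))$ is legitimate, and $w^{-1}(\alpha_i)$ does lie in the span of $\alpha_2,\ldots,\alpha_l$ since each $s_k$, $k\geq 2$, preserves that span. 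What each approach buys: the paper's induction is self-contained and stays inside the combinatorial bookkeeping it has already set up (Lemma \ref{recursiondi}), at the cost of a nontrivial case analysis; your argument imports the invariant bilinear form and the positive/negative root dichotomy (standard material, available in the paper's main reference \cite{BjBr}), and in exchange it is shorter, explains conceptually why the combination $2c_i(w)-\sum_{k\neq i}c_k(w)$ is the natural one (it is the pairing against $\alpha_i$), and makes the nonvanishing $d_i(w)\neq 0$ transparent rather than a consequence of careful sign-tracking.
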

\begin{proof}
   We proceed by induction on $\ell(w)$. If $\ell(w)=0$ then $d_i(w)=-1$ and $i\notin \Des_L(w)$, and the statement is true.
So let $\ell(w)\geq 1$. 

If $i\in \Des_L(w)$ let $w=s_iu$, with $i\notin \Des_L(u)$. By the induction
 hypothesis we have $d_i(u)<0$ and so, by Lemma \ref{recursiondi}, we have $d_i(w)=-d_i(u)>0$.

If $i\notin \Des_L(w)$ let $j$ be such that $j\in \Des_L(w)$ and $w=s_ju$, with $j\notin \Des_L(u)$. 
Now two cases occur:
 if $i\notin \Des_L(u)$ we have by induction that $d_i(u),d_j(u)<0$ and so, by Lemma \ref{recursiondi} we conclude that
$d_i(w)=d_i(u)+d_j(u)<0$. If $i\in \Des_L(u)$ we let $\tilde u$ be such that $w=s_js_i\tilde u$, with $\ell(w)=\ell(\tilde u)+2$. We have that $j\notin \Des_L(\tilde u)$ since otherwise we could obtain a reduced expression for $w$ starting with $s_i$. Therefore, by induction we have $d_j(\tilde u)<0$.  
Using Lemma \ref{recursiondi} we then conclude that $d_i(w)=d_i(s_i\tilde u)+d_j(s_i\tilde u)=-d_i(\tilde u)+d_j(\tilde u)+d_i(\tilde u)=d_j(\tilde u)<0$.
\end{proof}
\begin{cor}\label{ht>l}
   For all $w\in W'$ we have $\mathrm{ht}(w(\alpha_1))\geq \ell(w)+1$, where  $\mathrm{ht}$ denotes the height function defined by $\mathrm{ht}(\sum c_i\alpha_i)=\sum c_i$.
\end{cor}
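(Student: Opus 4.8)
The plan is to prove the inequality by induction on $\ell(w)$, using the observation that the integer $d_i(w)$ introduced above is exactly the quantity governing how $\mathrm{ht}(w(\alpha_1))$ changes when one peels off a left descent $s_i$. For the base case $w=e$ we have $w(\alpha_1)=\alpha_1$, so $\mathrm{ht}(w(\alpha_1))=1=\ell(w)+1$ and the statement holds.

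For the inductive step, assume $\ell(w)\geq 1$ and choose $i\in\Des_L(w)$; since $w\in W'$, every left descent lies in $[2,l]$, so $i\in[2,l]$ and Proposition \ref{dides} applies. Write $w=s_iu$ with $\ell(u)=\ell(w)-1$ and $i\notin\Des_L(u)$. Starting from $u(\alpha_1)=\sum_k c_k(u)\alpha_k$ and applying the explicit action $s_i(\alpha_j)=\alpha_i+\alpha_j$ for $j\neq i$ and $s_i(\alpha_i)=-\alpha_i$, a direct computation gives
\[
  w(\alpha_1)=s_i\big(u(\alpha_1)\big)=u(\alpha_1)-d_i(u)\,\alpha_i,
\]
the coefficient of $\alpha_i$ being shifted by exactly $-d_i(u)=-\big(2c_i(u)-\sum_{k\neq i}c_k(u)\big)$. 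Taking heights and using $\mathrm{ht}(\alpha_i)=1$ then yields $\mathrm{ht}(w(\alpha_1))=\mathrm{ht}(u(\alpha_1))-d_i(u)$.

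Finally I would invoke Proposition \ref{dides}: since $i\notin\Des_L(u)$ we have $d_i(u)<0$, and as $d_i(u)$ is a nonzero integer this forces $-d_i(u)\geq 1$. Combining this with the induction hypothesis $\mathrm{ht}(u(\alpha_1))\geq\ell(u)+1=\ell(w)$ gives $\mathrm{ht}(w(\alpha_1))\geq\ell(w)+1$, closing the induction. I do not expect a serious obstacle: the argument rests entirely on the height decrement $-d_i(u)$ identified by the displayed reflection identity and on the sign information already supplied by Proposition \ref{dides}. The only points requiring a little care are the bookkeeping in that identity and the remark that any left descent of an element of $W'$ has index in $[2,l]$, which is what makes Proposition \ref{dides} applicable.
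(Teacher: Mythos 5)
Your proof is correct and follows essentially the same route as the paper: induction on $\ell(w)$, peeling off a left descent $s_i$, identifying the height increment as $-d_i(u)$ (you via the explicit reflection action, the paper via its Eq.~\eqref{ciu}, which amounts to the same computation), and invoking Proposition \ref{dides} with $i\notin\Des_L(u)$ to get $-d_i(u)\geq 1$. Your explicit remark that left descents of elements of $W'$ lie in $[2,l]$, so that Proposition \ref{dides} applies, is a detail the paper leaves implicit.
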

\begin{proof}
   We proceed by induction on $\ell(w)$, the result being trivial if $\ell(w)=0$. So let $\ell(w)>0$, $i\in \Des_L(w)$ and $w=s_iu$. Using Eq. \eqref{ciu} we easily have that $c_i(w)=c_i(u)-d_i(u)$. Therefore we have 
\[
   \mathrm{ht}(w(\alpha_1))=\mathrm{ht}(u(\alpha_1))-d_i(u)\geq \ell(u)+1-d_i(u)=\ell(w)-d_i(u)
\]
and the result follows since $d_i(u)<0$ by Proposition \ref{dides}.

\end{proof}

We now show the existence of reflection orderings in a 3-complete Coxeter group satisfying some particular properties.
 
\begin{lem}\label{szszs}Let $(W,S)$ be a 3-complete Coxeter system, $s\in S$ and $P$ be the parabolic subgroup of $W$ generated by $S\setminus\{s\}$. Then there exists a reflection ordering $\ll$ such that for any reflection $t\in P$ and any element $z\in P$, $\ell(z)\geq 2$, we have
\[
t\ll szsz^{-1}s\ll sts\ll s.
\] 
\end{lem}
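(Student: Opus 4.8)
The plan is to realize $\ll$ as an instance of the general construction of Proposition \ref{ll}, tuned so that the coefficient of $\alpha_s$ becomes the dominant key of the comparison. After relabelling the generators we may assume $s=s_1$, so that $P=\langle s_2,\dots,s_l\rangle$ is exactly the subgroup $W'$ of the previous subsection. I would choose the weight function $p$ with $p(\alpha_1)=0$ and $p(\alpha_i)=1$ for $i\geq 2$, so that $\Phi_0^+(p)=\{\alpha_s\}$, the auxiliary ordering $\prec$ on it is trivial, and $s$ is automatically the $\ll$-maximum among all reflections. The one genuinely new choice, compared with Corollary \ref{goodorder}, is to index the simple roots so that $\alpha_s=\alpha_1$ comes \emph{first}; then for two roots $\beta,\beta'$ of nonzero weight one has $\beta\ll\beta'$ as soon as the coefficient of $\alpha_s$ in $\beta/p(\beta)$ is smaller than that in $\beta'/p(\beta')$, the remaining coordinates only breaking ties.

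Next I would compute the coefficient of $\alpha_s$ in the normalized positive root of each reflection occurring in the claimed chain, using throughout the 3-complete formulas $s_i(\alpha_j)=\alpha_i+\alpha_j$ ($i\neq j$) and $s_i(\alpha_i)=-\alpha_i$. For a reflection $t\in P$ its positive root $\alpha_t$ lies in the span of $\{\alpha_i:\,i\neq s\}$, so the coefficient of $\alpha_s$ in $\alpha_t/p(\alpha_t)$ is $0$. For $sts$ one has $\alpha_{sts}=s(\alpha_t)=\alpha_t+\mathrm{ht}(\alpha_t)\alpha_s$, of weight $\mathrm{ht}(\alpha_t)$, so the corresponding coefficient is $1$. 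For $r=szsz^{-1}s$ I would first note that each $s_i$ with $i\neq 1$ alters only the $\alpha_i$-coordinate of a root, whence the $\alpha_s$-coordinate of $z(\alpha_s)$ equals that of $\alpha_s$, namely $1$; in particular $z(\alpha_s)$ is positive and we may write $z(\alpha_s)=\alpha_s+\sum_{i\neq s}c_i(z)\alpha_i$ with $c_i(z)\geq 0$ and $H\eqdef\sum_{i\neq s}c_i(z)$. A direct application of $s$ then gives $\alpha_r=s(z(\alpha_s))=\sum_{i\neq s}c_i(z)\alpha_i+(H-1)\alpha_s$, of weight $H$, so the coefficient of $\alpha_s$ in $\alpha_r/p(\alpha_r)$ is $(H-1)/H$. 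Here the hypothesis $\ell(z)\geq 2$ enters through Corollary \ref{ht>l}: $\mathrm{ht}(z(\alpha_s))=1+H\geq \ell(z)+1\geq 3$, so $H\geq 2$, $\alpha_r$ is a genuine positive root, and $(H-1)/H\in[\tfrac12,1)$.

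Finally I would conclude. The three coefficients just computed satisfy $0<(H-1)/H<1$, so by the lexicographic comparison (whose leading entry is the coefficient of $\alpha_s$) we get $t\ll szsz^{-1}s\ll sts$, while $sts\ll s$ because $s$ is the $\ll$-maximum; these bounds are uniform in $t$ and $z$, which is exactly what the statement requires. The proof is essentially this bookkeeping, and I expect the main obstacle to be conceptual rather than computational: one must realize that Corollary \ref{goodorder}, which places $\alpha_s$ \emph{last} and hence only separates $t$ from $sts$ after all other coordinates agree, is too weak here, and that re-indexing $\alpha_s$ to the front is what forces the intermediate reflections $szsz^{-1}s$ into the narrow band $(0,1)$ between the strata of coefficient $0$ and $1$. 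The remaining care is to check that $\alpha_r$ is positive, so that it indeed names a reflection, and that all four reflections are distinct, both of which follow from $H\geq 2$.
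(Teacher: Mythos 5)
Your proof is correct and follows essentially the same route as the paper: both arguments instantiate Proposition \ref{ll} with $\alpha_s$ indexed first and use Corollary \ref{ht>l} (via the hypothesis $\ell(z)\geq 2$) to force the normalized $\alpha_s$-coefficient of $szsz^{-1}s$ strictly between those of $t$ and $sts$. The only difference is cosmetic: the paper takes $p=\mathrm{ht}$, obtaining the chain $0<(c-1)/(2c-1)<\tfrac12<1$ of first lexicographic coordinates, whereas you take $p(\alpha_s)=0$, so the middle band becomes $[\tfrac12,1)$ against $1$ and the maximality of $s$ comes from the zero-weight stratum rather than from the lexicographic comparison.
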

\begin{proof}
We consider the reflection ordering $\ll$ constructed as in Proposition \ref{ll}, where the weight $p=\mathrm{ht}$ is the height function, and the indexing $I=(\alpha_1,\ldots,\alpha_l)$ is such that $\alpha_1=\alpha_s$ (there is no choice for $\prec$ here, since $\Phi_0^+(\mathrm{ht})=\emptyset$), so $s\in S$ is the simple reflection corresponding to $\alpha_1$. 
It is enough to show that
\[
\frac{x_1(\alpha_t)}{\mathrm{ht}(\alpha_t)}<\frac{x_1(sz(\alpha_s))}{\mathrm{ht}(sz(\alpha_s))}<\frac{x_1(s(\alpha_t))}{\mathrm{ht}(s(\alpha_t))}<\frac{x_1(\alpha_s)}{\mathrm{ht}(\alpha_s)}.
\]
Since $x_1(\alpha_t)=0$ and $\frac{x_1(\alpha_s)}{\mathrm{ht}(\alpha_s)}=1$ we have to show that
\[
0<\frac{x_1(sz(\alpha_s))}{\mathrm{ht}(sz(\alpha_s))}<\frac{x_1(s(\alpha_t))}{\mathrm{ht}(s(\alpha_t))}<1.
\]
Recall that we have $r(\alpha_{r'})=\alpha_{r}+\alpha_{r'}$ for all $r,r'\in S$, $r\neq r'$. In particular we have, since $t \in P$
\[
s(\alpha_t)=\alpha_t+\mathrm{ht}(\alpha_t)\alpha_s.
\]
It follows that $\frac{x_1(s(\alpha_t))}{\mathrm{ht}(s(\alpha_t))}=\frac{1}{2}$ and so to conclude the proof we only have to show that
\[
0<\frac{x_1(sz(\alpha_s))}{\mathrm{ht}(sz(\alpha_s))}<\frac{1}{2}
\]
for all $z\in P$, $\ell(z)\geq 2$. So let $z(\alpha_s)=\alpha_s+\sum_{i\geq 2}c_i\alpha_i$. By Corollary \ref{ht>l} we have $\mathrm{ht}(z(\alpha_s))=1+\sum_{i\geq 2} c_i\geq \ell(z)+1$ and in particular we have $c\eqdef\sum_{i\geq 2} {c_i}\geq 2$. 
Therefore $sz(\alpha_s)=(c-1)\alpha_s+\sum_{i\geq 2}c_i\alpha_i$ and so $x_1(sz(\alpha_s))=c-1$ and $\mathrm{ht}(sz(\alpha_s))=2c-1$. 
The result follows.

\end{proof}

\begin{prop}\label{szrzs}
Let $(W,S)$ be a 3-complete Coxeter system. Let $r,s\in S$, $r\neq s$. Let $P$ be the parabolic subgroup of $W$ generated by $S\setminus\{r,s\}$. Then for every $t,z,w\in P$, $t$ a reflection, $\ell(z)\geq 2$, we have 
\[
t\ll szsz^{-1}s\ll sts\ll s
\]
and
\[
   t\ll swrw^{-1}s\ll sts.
\]
Moreover $t\ll wsw^{-1}$, $t\ll wrw^{-1}$ and $r\ll s$.
\end{prop}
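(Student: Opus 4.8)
The plan is to exhibit a single reflection ordering $\ll$, built via Proposition \ref{ll}, for which all the asserted inequalities follow by inspecting the leading coordinate of the normalized roots. Relabel $S$ so that $s=s_1$ and $r=s_2$, write $\alpha_1=\alpha_s$, $\alpha_2=\alpha_r$, and let $\alpha_3,\dots,\alpha_l$ be the simple roots generating $P$. The key idea is to choose the weight function $p$ given by $p(\alpha_1)=1$, $p(\alpha_2)=2$ and $p(\alpha_i)=1$ for $i\geq 3$, together with the indexing $I=(\alpha_1,\dots,\alpha_l)$. Since $p$ is strictly positive we have $\Phi_0^+(p)=\emptyset$, so no auxiliary ordering $\prec$ is needed, and Proposition \ref{ll} guarantees that the resulting order $\ll$ is a reflection ordering. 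For a positive root $\beta=\sum_i x_i(\beta)\alpha_i$ I will write $\theta(\beta):=x_1(\beta)/p(\beta)$ for its normalized leading (i.e. $\alpha_s$-) coordinate; by construction $\beta\ll\beta'$ whenever $\theta(\beta)<\theta(\beta')$.

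First I would compute, using the 3-complete action $s_i(\alpha_j)=\alpha_i+\alpha_j$ for $i\neq j$ and $s_i(\alpha_i)=-\alpha_i$, the positive roots of the reflections involved. For a reflection $t\in P$ one has $\alpha_t=\sum_{i\geq 3}a_i\alpha_i$, whence $\alpha_{sts}=s(\alpha_t)=\mathrm{ht}(\alpha_t)\alpha_s+\alpha_t$ and $\theta(sts)=\tfrac12$. For $z\in P$ the coefficient of $\alpha_s$ in $z(\alpha_s)$ remains equal to $1$ (applying any $s_i$, $i\geq3$, alters only the $\alpha_i$-coordinate), so $z(\alpha_s)=\alpha_s+\sum_{i\geq3}b_i\alpha_i$ with $c:=\sum_{i\geq3}b_i=\mathrm{ht}(z(\alpha_s))-1$; since $P\subseteq\langle S\setminus\{s\}\rangle$, Corollary \ref{ht>l} gives $\mathrm{ht}(z(\alpha_s))\geq\ell(z)+1$, hence $c\geq\ell(z)\geq2$ under the hypothesis on $z$. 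Then $\alpha_{szsz^{-1}s}=s(z(\alpha_s))=(c-1)\alpha_s+\sum_{i\geq3}b_i\alpha_i$, a positive root with $\theta(szsz^{-1}s)=\frac{c-1}{2c-1}$. Likewise $w(\alpha_r)=\alpha_r+\sum_{i\geq3}b_i'\alpha_i$ with $c':=\sum_{i\geq3}b_i'\geq0$, so $\alpha_{swrw^{-1}s}=s(w(\alpha_r))=(1+c')\alpha_s+\alpha_r+\sum_{i\geq3}b_i'\alpha_i$ and $\theta(swrw^{-1}s)=\frac{1+c'}{3+2c'}$, the weight $2$ on $\alpha_r$ being responsible for the denominator $3+2c'$. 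Finally $\theta(t)=\theta(r)=\theta(wrw^{-1})=0$, $\theta(s)=1$, and, writing $c''=\mathrm{ht}(w(\alpha_s))-1\geq0$, $\theta(wsw^{-1})=\frac{1}{1+c''}>0$.

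With these values in hand, most inequalities are immediate from the leading coordinate: the elementary estimates $0<\frac{c-1}{2c-1}<\frac12$ (valid precisely because $c\geq 2$) give $t\ll szsz^{-1}s\ll sts\ll s$; the estimates $0<\frac{1+c'}{3+2c'}<\frac12$ (valid for every $c'\geq0$, which is why no length restriction on $w$ is needed) give $t\ll swrw^{-1}s\ll sts$; and $\theta(r)=0<1=\theta(s)$ together with $\theta(t)=0<\frac{1}{1+c''}=\theta(wsw^{-1})$ give $r\ll s$ and $t\ll wsw^{-1}$. The only pair that ties at the leading coordinate is $t$ versus $wrw^{-1}$, both with $\theta=0$; here I would break the tie at the second ($\alpha_r$-) coordinate, which is $0$ for $t$ but strictly positive, namely $\frac{1}{2+c'}$, for $wrw^{-1}$, yielding $t\ll wrw^{-1}$.

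I expect the genuine obstacle to be the inequality $swrw^{-1}s\ll sts$. With the naive choice $p=\mathrm{ht}$ used in Lemma \ref{szszs}, both $swrw^{-1}s$ and $sts$ have normalized $\alpha_s$-coordinate exactly $\tfrac12$, so they tie at the leading coordinate, and the coordinate that then distinguishes them, namely the $\alpha_r$-coordinate, orders them the \emph{wrong} way, while the $\alpha_i$-coordinates for $i\geq 3$ depend uncontrollably on $t$ and $w$. The crucial device is therefore the asymmetric weight $p(\alpha_r)=2$: it lowers $\theta(swrw^{-1}s)$ from $\tfrac12$ to $\frac{1+c'}{3+2c'}<\tfrac12$ while leaving $\theta(sts)=\tfrac12$ untouched, breaking the tie in the correct direction already at the leading coordinate. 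Verifying that this modified weight simultaneously preserves every other inequality is then routine once the roots above have been computed.
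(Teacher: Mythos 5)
Your proposal is correct and follows essentially the same route as the paper: the same weight function ($p(\alpha_r)=2$, all other simple roots weight $1$) with the same indexing ($\alpha_s$ first, $\alpha_r$ second), fed into Proposition \ref{ll}, with the same normalized-first-coordinate computations $\frac{c-1}{2c-1}$, $\frac{1}{2}$, and $\frac{c+1}{2c+3}$. The only cosmetic differences are that the paper obtains the first chain by noting that the restriction of $\ll$ to the parabolic subgroup generated by $S\setminus\{r\}$ is exactly the ordering of Lemma \ref{szszs} (rather than recomputing it), and it leaves the tie-break for $t\ll wrw^{-1}$ as "clear from the definition," which you spell out.
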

\begin{proof}
We consider the weight $p$ given by $p(\alpha)=1$ for all $\alpha\in \Pi\setminus\{\alpha_r\}$ and $p(\alpha_r)=2$. 
We also consider an indexing $I$ of $\Pi=\{\alpha_1,\alpha_2,\ldots,\alpha_\ell\}$ such that $\alpha_1=\alpha_s$ and $\alpha_2=\alpha_r$ and we let $\ll$ be the reflection ordering associated to $p$ and $I$ (again there is no choice for $\prec$ as $\Phi^+_0(p)=\emptyset$).
   As the restriction of $\ll$ to the parabolic subgroup generated by $S\setminus\{r\}$ is the reflection ordering considered in Lemma \ref{szszs} the first part of the statement follows.

 For the second part we first observe that $w(\alpha_r)=\alpha_r+\sum_{i\geq 3} c_i\alpha_i$. Let $c\eqdef \sum_{i \geq 3}c_i\geq 0$. Then $sw(\alpha_r)=(c+1)\alpha_s+\alpha_r+\sum_{i\geq 3} c_i\alpha_i$ and $x_1(sw(\alpha_r))=c+1\geq 1$, which implies $t\ll swrw^{-1}s$. Moreover we have
\[
   \frac{x_1(sw(\alpha_r))}{p(sw(\alpha_r))}=\frac{c+1}{2c+3}<\frac{1}{2}=\frac{x_1(s(\alpha_t))}{p(s(\alpha_t))}
\]
implying $swrw^{-1}s\ll sts$. The relations $t\ll wsw^{-1}$, $t\ll wrw^{-1}$ and $r\ll s$ are all clear from the definition.
\end{proof}

We can now prove the second main result of this section.

\begin{thm}\label{finalsvs}Let $(W,S)$ be a 3-complete Coxeter system. Let $r,s\in S$, $r\neq s$, and $P$ be the parabolic subgroup generated by $S\setminus\{s,r\}$. Then for all $v\in P$, $v\neq e$,  we have
\[
   \tilde \Psi_{e,svs}+d\cdot \tilde \Psi_{e,v}=\tilde \Psi_{e,rvs}+\tilde \Psi_{e,v}.
\]
\end{thm}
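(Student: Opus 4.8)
The plan is to first dispose of $\tilde\Psi_{e,rvs}$ by recognizing it as an iterated pyramid, and then to reduce the whole statement to a single formula for $\tilde\Psi_{e,svs}$ which I would prove by a direct analysis of paths in the Bruhat graph. Since $v\in P$ we have $s,r\not\leq v$, so $\ell(vs)=\ell(v)+1$ and $\ell(svs)=\ell(rvs)=\ell(v)+2$; moreover any reduced word of $vs$ uses only generators in $(S\setminus\{r,s\})\cup\{s\}$, so $r\not\leq vs$. Hence $[e,vs]$ is a right pyramid over $[e,v]$ and $[e,rvs]$ is a left pyramid over $[e,vs]$, and Corollaries \ref{finalpyr} and \ref{finalpyrl} give
\[
\tilde\Psi_{e,vs}=c\,\tilde\Psi_{e,v}+G'(\tilde\Psi_{e,v}),\qquad \tilde\Psi_{e,rvs}=c\,\tilde\Psi_{e,vs}+G'(\tilde\Psi_{e,vs}).
\]
Thus the identity to be proved, $\tilde\Psi_{e,svs}+d\,\tilde\Psi_{e,v}=\tilde\Psi_{e,rvs}+\tilde\Psi_{e,v}$, becomes equivalent to the single assertion
\[
\tilde\Psi_{e,svs}=c\,\tilde\Psi_{e,vs}+G'(\tilde\Psi_{e,vs})+(1-d)\,\tilde\Psi_{e,v},
\]
so from this point the target is this formula for the complete $cd$-index of $svs$.

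The main work is to compute $\tilde\Psi_{e,svs}=\sum_{\Delta\in B(e,svs)}m_\ll(\Delta)$ directly, where $\ll$ is the reflection ordering produced in Proposition \ref{szrzs}. The key tool, exactly as in the proof of Proposition \ref{pyr}, is that conjugation by $s$ (sending $x\mapsto sx$ and each reflection $t_i\mapsto st_is$) is a bijection between paths from $x$ to $vs$ and paths from $sx$ to $svs$; the properties of $\ll$ recorded in Proposition \ref{szrzs} — that $s$ is maximal, that $t\ll t'\Leftrightarrow sts\ll st's$ for reflections $t,t'\in P$, and that $sts$ and $szsz^{-1}s$ lie below $s$ — are precisely what guarantee that this map preserves $m_\ll(\Delta)$. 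Mirroring the pyramid argument on the left, I would split $B(e,svs)$ according to the unique step that is left multiplication by $s$ crossing \emph{out} of $[e,vs]$ into its $s$-translate: the part before is a path to some $x\le vs$, the conjugate of the part after is a path $x\to vs$, and the crossing step, being multiplication by the maximal reflection $s$, contributes the boundary/interior factors that assemble — after the usual averaging over $\ll$ and its lower $s$-conjugate $\ll_s$, as in Proposition \ref{pyr} — into the left-pyramid value $c\,\tilde\Psi_{e,vs}+G'(\tilde\Psi_{e,vs})=\tilde\Psi_{e,rvs}$.

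The remaining, and genuinely delicate, contribution comes from the fact that $[e,svs]$ is \emph{not} a pyramid over $[e,vs]$: since $s\leq vs$, the decomposition $[e,svs]=[e,vs]\sqcup s[e,vs]$ fails, the overlap being an embedded copy of $[e,v]$. For paths whose left-$s$-multiplication step $x\to sx$ has both $x,sx\le vs$ (the ``internal'' steps, occurring precisely when $x$ lies in the overlap region $[e,v]$), the reflections adjacent to the crossing take the two-sided conjugated forms $szsz^{-1}s$ and $sts$ rather than the single maximal $s$. The plan is to sort these paths by the element $x\in[e,v]$ where the two-sided interaction occurs and to apply the conjugation bijection to the two portions on either side of $x$; the inequalities $t\ll szsz^{-1}s\ll sts\ll s$ and $t\ll swrw^{-1}s\ll sts$ of Proposition \ref{szrzs} force the local factor around $x$ to collapse to $1$ instead of the $d$ that a genuine interior crossing would produce. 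The effect is therefore that $\tilde\Psi_{e,svs}$ equals the left-pyramid value $\tilde\Psi_{e,rvs}$ except that the crossing contributions over the overlap region $[e,v]$ carry a factor $1$ in place of $d$; this replaces $d\,\tilde\Psi_{e,v}$ by $\tilde\Psi_{e,v}$ and yields exactly the correction $(1-d)\,\tilde\Psi_{e,v}$, establishing the displayed formula and hence the theorem.

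I expect the main obstacle to be precisely this last step: proving that the overlap paths contribute $(1-d)\,\tilde\Psi_{e,v}$, i.e. that the interaction factor is $1$ rather than $d$ (or $0$). This is the whole reason for the careful construction of $\ll$ in Proposition \ref{szrzs}, since it is the mutual comparisons among the $s$-conjugated reflections $szsz^{-1}s,\ sts$ and the $r$-conjugated reflections $swrw^{-1}s$ that decide whether this factor degenerates; carrying out these comparisons path-by-path, and keeping the two-sided bookkeeping consistent under both $\ll$ and $\ll_s$, is where the real effort lies.
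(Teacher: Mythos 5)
Your opening reduction is sound: since every reduced word of $vs$ has its unique letter $s$ at the end, indeed $r\not\leq vs$, so Corollaries \ref{finalpyr} and \ref{finalpyrl} pin down $\tilde \Psi_{e,vs}$ and $\tilde \Psi_{e,rvs}$, and the theorem is equivalent to the single identity $\tilde \Psi_{e,svs}=\tilde \Psi_{e,rvs}+(1-d)\tilde \Psi_{e,v}$. The gap lies in the path analysis you propose for $\tilde \Psi_{e,svs}$, and it is not a repairable technicality: the decomposition you want to sum over does not exist. A path in $B(e,svs)$ need not contain \emph{any} step which is left multiplication by $s$. The interval $[e,svs]$ is the union of the four strata $\{x\},\{xs\},\{sx\},\{sxs\}$ ($x\leq v$), and there are paths that jump from $e$ directly into the stratum $\{sxs\}$ via a conjugated reflection and never leave it, namely paths of the form $(e\stackrel{st_1s}{\longrightarrow}st_1s\stackrel{st_2s}{\longrightarrow}\cdots\stackrel{st_ks}{\longrightarrow}svs)$ with $t_1,\ldots,t_k$ reflections of $P$; the simplest instance is the length-one path $e\longrightarrow svs$ when $v$ is itself a reflection of $P$. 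These paths have no crossing step $y\mapsto sy$ whatsoever, so your decomposition never sees them, and yet (after deconjugation $st_is\mapsto t_i$, which is monomial-preserving precisely by the comparisons of $\ll$ and $\ll_s$ in Proposition \ref{szrzs}) they are in bijection with $B(e,v)$ and are exactly the source of the $+\tilde \Psi_{e,v}$, i.e.\ of the ``$1$'' in your $(1-d)\tilde \Psi_{e,v}$. Your geometric picture is also wrong: the overlap $[e,vs]\cap s[e,vs]$ is just $\{e,s\}$, not an embedded copy of $[e,v]$ (the condition $x,sx\leq vs$ forces $x=e$, because every reduced word of $sx$ begins with $s$ while the only $s$ in a reduced word of $vs$ is the final letter). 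So there is no family of ``internal crossings indexed by $[e,v]$'' whose local factor could collapse from $d$ to $1$.

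The true mechanism, which the paper formalizes, is different on both sides of the correction term. The paper classifies $\Delta\in B(e,svs)$ by the form ($sxs$, $xs$, or $sx$) of the first element of $\Delta$ lying strictly above $s$: the first class is the purely conjugated paths above, mapped to $B(e,v)$; the other two classes (the only ones containing an $s$-multiplication step) are mapped, by replacing the left $s$ by $r$, onto the paths of $B(e,rvs)$ that avoid $rs$. Separately, the paths of $B(e,rvs)$ through $rs$, which begin $(e\to r\to rs)$ or $(e\to s\to rs)$, are matched with two copies of $B(e,v)$ carrying factors $ab$ and $ba$, producing $d\cdot\tilde \Psi_{e,v}$; the $-d\tilde \Psi_{e,v}$ in your target thus records the \emph{absence} in $B(e,svs)$ of any analogue of these paths (they would have to pass through $s\cdot s=e$), not a modified crossing factor. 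A small example makes all of this concrete: for $v=s_1$ a single generator, $B(e,ss_1s)$ consists of one path of length $1$ (contributing $1=\tilde \Psi_{e,v}$) and four paths of length $3$ (contributing $c^2$), while $B(e,rs_1s)$ consists of six paths of length $3$, the two through $rs$ contributing $d$; your proposed decomposition has nothing to say about the length-one path. So while you have identified the right tools (the orderings of Proposition \ref{szrzs} and the pyramid corollaries), the heart of the proof --- producing and accounting for the purely conjugated paths, and matching everything else with $B(e,rvs)$ --- is missing, and the mechanism you describe would not produce it.
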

\begin{proof}
   We establish the result by means of an explicit bijection. In particular we exhibit a bijection $\sigma$ between $B(e,svs)\cup B(e,v)\cup \overline {B(e,v)}$ and $B(e,rvs)\cup B(e,v)$, where $\overline {B(e,v)}$ is just a copy of $B(e,v)$, which is well-behaved with respect to the contributions of these paths to the corresponding complete $cd$-indices in the following sense. 
If $\Delta\in B(e,svs)$ or $\Delta\in B(e,rvs)$ we consider the monomial $m(\Delta)=m_\ll(\Delta)$ with respect to the reflection ordering $\ll$ studied in Proposition \ref{szrzs}. If $\Delta\in B(e,v)$ (or $\Delta\in \overline{B(e,v)}$) we consider the monomial $m_{\ll_s}(\Delta)$ with respect to the lower $s$-conjugate $\ll_s$ of $\ll$. 
With this convention we will show that the bijection $\sigma$ has the following properties:
\begin{enumerate}
\item if $\Delta\in B(e,svs)$ then $m(\sigma(\Delta))=m(\Delta)$;
\item if $ \Delta\in B(e,v)$ then $m(\sigma(\Delta))=ab\cdot m(\Delta)$;
\item if $ \Delta\in \overline{B(e,v)}$ then $m(\sigma(\Delta))=ba\cdot m(\Delta)$.
\end{enumerate}

Consider the Bruhat graph of $[e,svs]$: the vertices of this graph can be visualized as in Figure \ref{xs}, where the four shaded regions correspond respectively from left to right to: (1) elements of the form $sxs$, for some $x\leq v$; (2) elements of the form $sx$ for some $x\leq v$; (3) elements of the form $xs$ for some $x\leq v$; (4) elements smaller than or equal to $v$.  
The bijection $\sigma$ is defined as follows. Let $\Delta \in B(e,svs)$. If the smallest element in the path $\Delta$ 
which is strictly greater than $s$ is of the form $sxs$ for some $x\leq v$  
then by the Exchange Condition (see, e.g., \cite[Theorem 1.4.3]{BjBr}), $sxs \in T$ and $\Delta$ is necessarily of the form
$\Delta=(x_0\stackrel{st_1s}{\longrightarrow} sx_1s\stackrel{st_2s}{\longrightarrow}\cdots \stackrel{st_{r}s}{\longrightarrow}sx_{r}s)$, with $t_i\in P$ and $x_i \leq v$ for all $i\in [r]$ (see Figure \ref{xs}, left), and we define $\sigma(\Delta)=(x_0\stackrel{t_1}{\longrightarrow} x_1\stackrel{t_2}{\longrightarrow}\cdots \stackrel{t_{r}}{\longrightarrow}x_{r})\in B(e,v)$; 
since $st_is\ll st_{i+1}s$ if and only if $t_i\ll_s t_{i+1}$ we clearly have $m_{\ll_s}(\sigma(\Delta))=m_{\ll}(\Delta)$.

Suppose now that the smallest element in the path $\Delta$ which is strictly greater than $s$ is of the form $xs$ for some $x\leq v$ (see Figure \ref{xs}, right). 

\begin{figure} \psfrag{e}{$e$} \psfrag{vs}{$vs $}\psfrag{sv}{$sv $}\psfrag{svs}{$svs $}\psfrag{v}{$v $}\psfrag{xs}{$xs $}\psfrag{sxs}{$sxs $}\psfrag{x}{$x$}\psfrag{xks}{$x_ks$}\psfrag{sxks}{$sx_ks$}
\includegraphics[scale=.7]{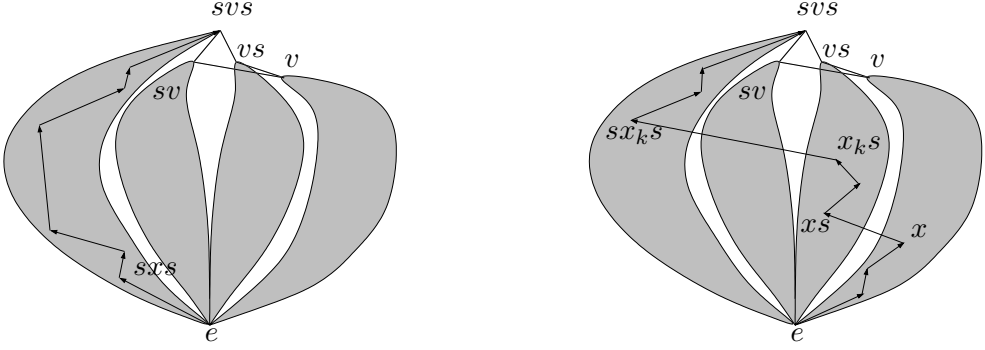}
\caption{Paths in the Bruhat graph of $[e,svs]$}
\label{xs}
\end{figure}
Then $\Delta $ is of the form
\begin{align*}
   \Delta=(x_0\stackrel{t_1}{\longrightarrow}&\cdots \stackrel{t_{i-1}}{\longrightarrow}x_{i-1}\stackrel{s}{\longrightarrow}x_{i-1}s\\
& \stackrel{st_is}{\longrightarrow}x_{i}s\cdots \stackrel{st_{k}s}{\longrightarrow}x_{k}s\stackrel{sx_k^{-1}sx_ks}{\longrightarrow}sx_{k}s\stackrel{st_{k+1}s}{\longrightarrow}\cdots \stackrel{st_{r}s}{\longrightarrow}sx_{r}s)
\end{align*}
for some integers $i,k$ such that $r \geq k\geq i-1 \geq 0$, $k \geq 1$, where $t_1,\ldots,t_{r}\in P$, $x_1,\ldots,x_{r}\leq v$. 
In this case we define $\sigma(\Delta) \in B(e,rvs)$ essentially by replacing the letter $s$ ``on the left'' by $r$. 
More precisely we let 
\begin{align*}\sigma(\Delta)=
 &(x_0\stackrel{t_1}{\longrightarrow}\cdots \stackrel{t_{i-1}}{\longrightarrow}x_{i-1}\stackrel{s}{\longrightarrow}x_{i-1}s\\
&\stackrel{st_is}{\longrightarrow}x_{i}s\cdots \stackrel{st_{k}s}{\longrightarrow}x_{k}s\stackrel{sx_k^{-1}rx_ks}{\longrightarrow}rx_{k}s\stackrel{st_{k+1}s}{\longrightarrow}\cdots \stackrel{st_{r}s}{\longrightarrow}rx_{r}s)  
\end{align*}
and it follows from Proposition \ref{szrzs} that  $m_\ll(\sigma(\Delta))=m_\ll(\Delta)$ (we observe here that if $\ell(x_k)=1$ then $sx_k^{-1}sx_ks=x_k$ and in particular we still have $sx_k^{-1}sx_ks\ll sts$ for all reflections $t\in P$).

Finally, if the smallest element strictly greater than $s$ in the path $\Delta$ is of the form $sx$ for some $x\leq v$, then $\Delta $ is of the form
\begin{align*}
\Delta=   (x_0\stackrel{t_1}{\longrightarrow}&\cdots \stackrel{t_{i-1}}{\longrightarrow}x_{i-1}\stackrel{x_{i-1}^{-1}sx_{i-1}}{\longrightarrow}sx_{i-1}\\
&\stackrel{t_i}{\longrightarrow}sx_{i}\cdots \stackrel{t_{k}}{\longrightarrow}sx_{k}\stackrel{s}{\longrightarrow}sx_{k}s\stackrel{st_{k+1}s}{\longrightarrow}\cdots \stackrel{st_{r}s}{\longrightarrow}sx_{r}s)
\end{align*}
for some integers $i,k$ such that $r \geq k\geq i-1 \geq 0$, $k \geq 1$, where $t_1,\ldots,t_{r}\in P$, $x_1,\ldots,x_{r}\leq v$, and we let $\sigma(\Delta) \in B(e,rvs)$ be defined by 
\begin{align*}
  \sigma(\Delta) =(x_0\stackrel{t_1}{\longrightarrow}&\cdots \stackrel{t_{i-1}}{\longrightarrow}x_{i-1}\stackrel{x_{i-1}^{-1}rx_{i-1}}{\longrightarrow}rx_{i-1}\\
&\stackrel{t_i}{\longrightarrow}rx_{i}\cdots \stackrel{t_{k}}{\longrightarrow}rx_{k}\stackrel{s}{\longrightarrow}rx_{k}s\stackrel{st_{k+1}s}{\longrightarrow}\cdots \stackrel{st_{r}s}{\longrightarrow}rx_{r}s).
\end{align*}
Also in this case it follows from Proposition \ref{szrzs} that $m_\ll(\sigma(\Delta))=m_\ll(\Delta)$. 
We have considered in this way all paths in $B(e,svs)$ and we have obtained all paths in $B(e,v)$ and all paths in $B(e,rvs)$ except those passing through $rs$. 

If $\Delta\in B(e,v)$, with
\[
   \Delta=(x_0\stackrel{t_1}{\longrightarrow} x_1\stackrel{t_2}{\longrightarrow}\cdots \stackrel{t_{r+1}}{\longrightarrow}x_{r+1})
\]
then we let 
\[
   \sigma(\Delta)=(x_0\stackrel{r}{\longrightarrow}r\stackrel{s}{\longrightarrow}rs\stackrel{st_1s}{\longrightarrow} rx_1s\stackrel{st_2s}{\longrightarrow}\cdots \stackrel{st_{r+1}s}{\longrightarrow}rx_{r+1}s)
\]
and if the same path $\Delta$ is considered in $\overline {B(e,v)}$ we let 
\[
  \sigma(\Delta)=(x_0\stackrel{s}{\longrightarrow}s\stackrel{srs}{\longrightarrow}rs\stackrel{st_1s}{\longrightarrow} rx_1s\stackrel{st_2s}{\longrightarrow}\cdots \stackrel{st_{r+1}s}{\longrightarrow}rx_{r+1}s).
\]
In the first case we have $m_{\ll}(\sigma(\Delta))=ab \cdot m_{\ll_s}(\Delta)$ by Proposition \ref{szrzs}. In the second case we similarly have $m_{\ll}(\sigma(\Delta))=ba \cdot m_{\ll_s}(\Delta)$ as $srs\ll st_1s$ by Proposition \ref{szrzs} used with $w=e$. 
\end{proof}

\subsection{ Homogeneous components and linear relations}
Following \cite{Rea} we consider a set $W_n$ of elements in the 3-complete Coxeter group $W$ of rank $n+1$ generated by $s_1,\ldots,s_{n+1}$ constructed recursively in the following way: we let $W_0=\{s_1\}$, $W_1=\{s_1s_2\}$ and, for $n\geq 2$, 
\[
   W_n=\{ws_{n+1}:\, w\in W_{n-1}\}\cup \{s_{n+1} w s_{n+1}:\,w\in W_{n-2}\}.
\]
We now consider the following space of $cd$-polynomials
\[
   V_n=Span\{\tilde \Psi_{e,v}:\, v\in W_n\}.
\]
Since $\ell(v)=n+1$ for all $v\in W_n$ we deduce that $V_n$ is contained in the space of $cd$-polynomials of degree bounded by $n$.
A set of generators for $V_n$ can also be described in the following way. Let $A_0=\{1\}$, $A_1=\{c\}$ and
\[
   A_n=\{c\cdot P+P':\, P\in A_{n-1}\}\cup \{ (d-1)\cdot P:\, P\in A_{n-2}\},
\]
where for all $P\in \mathcal A$ we let $P'\eqdef G'(P)$. By Corollaries  \ref{finalpyr}, \ref{finalpyrl} and Theorem \ref{finalsvs} we have that $A_n$ is a spanning set for $V_n$. We observe that $|A_n|=f_{n+1}$ and we denote its elements by $P_{n,1},\ldots,P_{n,f_{n+1}}$ in the following way. We let $P_{0,1}=1$, $P_{1,1}=c$ and 
\[P_{n,j}=
\begin{cases}
cP_{n-1,j}+P_{n-1,j}'&\textrm{if }1 \leq j\leq f_n\\ 
(d-1)P_{n-2,j-f_n}&\textrm{if }f_n <j \leq f_n + f_{n-1}
 \end{cases}
\]

The next result follows immediately from the above recursion.
\begin{lem}
\label{signs}
   Let $P_{n,j}=\sum_M a_M M$, the sum being over all monomials of degree at most $n$ (and of the same parity as $n$). 
If $M$ is a monomial of degree $n-2i$ ($i  \geq 0$) then $a_{M}(-1)^i \geq 0$.
\end{lem}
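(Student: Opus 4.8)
The plan is to prove the statement by a straightforward induction on $n$ that follows the recursion defining the $P_{n,j}$ verbatim. The base cases $P_{0,1}=1$ and $P_{1,1}=c$ are immediate, each being a single $cd$-monomial of degree $n-0$ with coefficient $+1$. For the inductive step the whole point is to track how the three operations occurring in the recursion — left multiplication by $c$, the derivation $G'$, and multiplication by $(d-1)$ — act on the "signed degree" structure of a $cd$-polynomial. To organize this I would introduce, for a $\mathbb{Z}$-combination $Q$ of $cd$-monomials and an integer $m$, the property $\mathrm{Sign}_m(Q)$: every monomial of $Q$ has degree $\equiv m \pmod 2$ and $\leq m$, and the coefficient of any degree-$(m-2i)$ monomial has sign $(-1)^i$. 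The lemma is exactly the assertion $\mathrm{Sign}_n(P_{n,j})$.

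First I would record two elementary facts about the operators. Since $c$ has degree $1$, the map $Q\mapsto cQ$ sends a $cd$-monomial $M$ to the $cd$-monomial $cM$ of degree $\deg M+1$, preserving its coefficient. Likewise, because $G'$ is the derivation with $G'(c)=d$ and $G'(d)=dc$, and both $d$ and $dc$ are words in $c,d$ of degree one more than $c$ and $d$, the Leibniz rule shows $G'$ sends each $cd$-monomial $M$ to a sum of $cd$-monomials, each of degree $\deg M+1$ and each carrying the same coefficient as $M$. Thus both $Q\mapsto cQ$ and $Q\mapsto G'(Q)$ raise every degree by exactly $1$ and never alter a sign, so $\mathrm{Sign}_{n-1}(Q)$ implies $\mathrm{Sign}_n(cQ)$ and $\mathrm{Sign}_n(G'(Q))$. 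Consequently, in the case $1\le j\le f_n$ the two summands $cP_{n-1,j}$ and $P_{n-1,j}'$ each satisfy $\mathrm{Sign}_n$, and since at every degree their contributions share the sign $(-1)^i$, no cancellation can produce a wrong sign; hence $\mathrm{Sign}_n(P_{n,j})$.

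For the case $f_n<j\le f_n+f_{n-1}$ I would show $\mathrm{Sign}_{n-2}(Q)\Rightarrow\mathrm{Sign}_n((d-1)Q)$. Multiplication by $d$ raises each degree by $2$ keeping the coefficient, so it matches a level-$i$ monomial of $Q$ against level $i$ of the new top degree $n$ with the correct sign $(-1)^i$. The $-Q$ term negates each coefficient while simultaneously re-reading a degree-$(n-2i)$ monomial as level $i+1$ of top degree $n$, and the negation $(-1)^i\mapsto(-1)^{i+1}$ is precisely the sign this reinterpretation demands. Thus both pieces of $(d-1)Q$ respect $\mathrm{Sign}_n$, and again adding same-sign contributions is harmless.

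The one point needing care — and the place I expect the only (modest) obstacle — is a possible collision in $(d-1)Q=dQ-Q$ between a monomial $dM$ produced by $dQ$ and a monomial $N$ produced by $-Q$ when they coincide as $cd$-words. I would observe that in any such collision the colliding monomial $N=dM$ has degree $n-2i'$, so that $M$ has degree $(n-2)-2i'$ contributing sign $(-1)^{i'}$ through $dQ$, while $N$ has degree $(n-2)-2(i'-1)$ in $Q$, whose coefficient is negated to sign $(-1)^{i'}$ through $-Q$; the two contributions therefore carry the same sign $(-1)^{i'}$ and their sum keeps it. (The small example $(d-1)(d-1)=d^2-2d+1$, matching $+,-,+$ at levels $i=0,1,2$ for $n=4$, illustrates exactly this collision at the monomial $d$.) Once this is noted, both cases of the recursion preserve $\mathrm{Sign}_n$, completing the induction.
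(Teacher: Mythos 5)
Your proof is correct, and it is precisely the induction on the defining recursion that the paper leaves implicit when it states that the lemma ``follows immediately from the above recursion'': the key observations (that $c\cdot$ and $G'$ raise every degree by exactly $1$ with nonnegative-coefficient images of monomials, that $(d-1)\cdot$ splits a level-$i$ term into levels $i$ and $i+1$ with the negation supplying exactly the required sign flip, and that all contributions landing on a common monomial share the same sign, so no cancellation can spoil it) are exactly what makes the paper's one-line claim valid. Nothing further is needed.
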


We consider the lexicographic order $\prec$ on the set of $cd$-monomials of degree $n$ for all $n\in \mathbb N$, where we let $c\prec d$. So for example, if $n=4$ we have $c^4\prec c^2d\prec cdc\prec dc^2\prec d^2$. 
The proof of the following result is a simple 
verification, and is left to the reader.
\begin{lem}
\label{deriv}
Let $M,I$ be $cd$-monomials of the same degree such that $I \preceq M$. Then the $cd$-polynomial $M'$ is a sum of monomials which are all $\succeq cI$.
\end{lem}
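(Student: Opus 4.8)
The plan is to induct on the common degree $n=\deg M=\deg I$, exploiting the explicit action of $G'$ on $cd$-monomials together with two elementary features of the order $\prec$. I would begin by recording how $M'$ is built: if $M=m_1\cdots m_k$ with $m_i\in\{c,d\}$, then $M'=\sum_i m_1\cdots m_{i-1}\,G'(m_i)\,m_{i+1}\cdots m_k$, where $G'(c)=d$ turns a $c$ into $d$ and $G'(d)=dc$ turns a $d$ into $dc$. In both cases the degree rises by exactly $1$, so $M'$ is homogeneous of degree $n+1$, which is also the degree of $cI$; hence every comparison below is between monomials of the \emph{same} degree. This is the point worth isolating, because within a fixed degree no monomial is a proper prefix of another, so $\prec$ is decided purely by the first letter in which two words differ. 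From this I extract the two facts I will use: (a) an equal-degree monomial beginning with $c$ is $\prec$ any one beginning with $d$; and (b) for a letter $x$ and equal-degree monomials $A,B$, one has $xA\preceq xB$ iff $A\preceq B$.

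The base case $n=0$ is vacuous, since then $M=1$ and $M'=0$. For the inductive step I would split on the first letter of $M$. If $M$ begins with $d$, say $M=dM_2$, then $M'=dc\,M_2+d\,(M_2)'$, so every monomial of $M'$ begins with $d$ and, by (a), exceeds $cI$ (which begins with $c$); note this case does not even use $I\preceq M$. If $M$ begins with $c$, write $M=cM_2$. Since $M$ begins with $c$, the hypothesis $I\preceq M$ forces $I$ to begin with $c$ as well—were its first letter $d$, fact (a) would give $I\succ M$—so $I=cI_2$ and by (b) $I_2\preceq M_2$, with $\deg I_2=\deg M_2=n-1$.

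Now I would use $M'=d\,M_2+c\,(M_2)'$. The monomial $dM_2$ begins with $d$, so by (a) it is $\succ cI$. For a typical monomial $N$ of $(M_2)'$, the induction hypothesis applied to the pair $M_2,I_2$ (of common degree $n-1$, with $I_2\preceq M_2$) yields $N\succeq cI_2$; then by (b), $cN\succeq c(cI_2)=cI$. Since the monomials of $M'$ are exactly $dM_2$ together with the $cN$, all of them are $\succeq cI$, completing the induction.

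The argument is largely bookkeeping, and the only genuinely delicate point is the first one: one must verify that $M'$ is homogeneous of degree $n+1$ (matching $cI$) so that all comparisons take place inside a single graded piece, where $\prec$ behaves like an unambiguous letter-by-letter order and facts (a) and (b) are valid.
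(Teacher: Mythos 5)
Your proof is correct. The paper itself supplies no argument for this lemma --- it is declared ``a simple verification'' and left to the reader --- so there is no official proof to compare against; your induction on the common degree, split on the first letter of $M$, is a complete and valid way to carry out that verification, and you correctly isolated the one point that makes it work: since $\deg c=1$ and $\deg d=2$, two distinct $cd$-monomials of the same degree are never prefixes of one another, so the lexicographic order is settled at the first differing letter, which is exactly what justifies your facts (a) and (b) and keeps every comparison inside a single homogeneous component.
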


If $P$ is a $cd$-polynomial with non-zero homogeneous component of degree $n$, we call the minimum monomial of degree $n$ appearing in $P$ with non-zero coefficient the $n$-th initial term of $P$. We denote by $M_{n,j}$ the $n$-th initial term of $P_{n,j}$.
\begin{lem}\label{init}
   For all $n\in\mathbb N$ we have $M_{n,1}\prec M_{n,2}\prec \cdots\prec M_{n,f_{n+1}}$.
\end{lem}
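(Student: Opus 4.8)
The plan is to prove the chain by induction on $n$, and the crux is to first pin down a clean recursion for the initial terms $M_{n,j}$ themselves. Since $M_{n,j}$ is the lex-minimum degree-$n$ monomial occurring in $P_{n,j}$, it depends only on the degree-$n$ homogeneous component of $P_{n,j}$, which I denote $H_{n,j}$. From the defining recursion one reads off that, for $1 \leq j \leq f_n$,
\[
H_{n,j} = c\,H_{n-1,j} + G'(H_{n-1,j}),
\]
because $G'$ raises degree by exactly one (indeed $G'(c)=d$ and $G'(d)=dc$ both raise degree by one), while for $f_n < j \leq f_{n+1}$ the constant $-1$ in $d-1$ lowers the degree, leaving $H_{n,j} = d\,H_{n-2,\,j-f_n}$. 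By Lemma~\ref{signs} applied with $i=0$, each $H_{m,k}$ has nonnegative coefficients, so $M_{m,k}$ occurs in $H_{m,k}$ with strictly positive coefficient.

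The first step is to establish the initial-term recursion
\[
M_{n,j} = \begin{cases} c\,M_{n-1,j}, & 1 \leq j \leq f_n,\\ d\,M_{n-2,\,j-f_n}, & f_n < j \leq f_{n+1}. \end{cases}
\]
The second branch is immediate from $H_{n,j} = d\,H_{n-2,\,j-f_n}$, since left-multiplication by the single letter $d$ preserves $\prec$ and so carries the minimum monomial of $H_{n-2,\,j-f_n}$ to the minimum of $H_{n,j}$. For the first branch I would argue as follows. Every monomial of $c\,H_{n-1,j}$ is $\succeq c\,M_{n-1,j}$, with equality attained. For the other summand, each monomial $M$ occurring in $H_{n-1,j}$ satisfies $M_{n-1,j} \preceq M$, so Lemma~\ref{deriv} (with $I = M_{n-1,j}$) shows that $G'(M)$, and hence $G'(H_{n-1,j})$, is a sum of monomials all $\succeq c\,M_{n-1,j}$. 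Thus every monomial of $H_{n,j}$ is $\succeq c\,M_{n-1,j}$. Finally $c\,M_{n-1,j}$ survives: its coefficient in $c\,H_{n-1,j}$ equals the positive coefficient of $M_{n-1,j}$ in $H_{n-1,j}$, while $G'$ preserves nonnegativity of coefficients (being a derivation sending $c,d$ to monomials with coefficient $+1$), so the contribution of $G'(H_{n-1,j})$ to this monomial is nonnegative. Hence $M_{n,j} = c\,M_{n-1,j}$, which in passing also shows $H_{n,j}\neq 0$, so $M_{n,j}$ is well defined.

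With the recursion in hand the chain follows by induction on $n$, the cases $n \leq 1$ being trivial. Assuming the two chains for $n-1$ and $n-2$, left-multiplication by $c$ (respectively $d$) preserves $\prec$, so $M_{n,1} \prec \cdots \prec M_{n,f_n}$ and $M_{n,f_n+1} \prec \cdots \prec M_{n,f_{n+1}}$ hold within each group. At the junction, $M_{n,f_n} = c\,M_{n-1,f_n}$ begins with $c$ while $M_{n,f_n+1} = d\,M_{n-2,1}$ begins with $d$; since $c \prec d$, any monomial beginning with $c$ is lex-smaller than any beginning with $d$, giving $M_{n,f_n} \prec M_{n,f_n+1}$ and completing the chain. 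I expect the main obstacle to be precisely the no-cancellation verification in the first branch: one must combine the lower bound of Lemma~\ref{deriv} on the $G'$-term with the sign information of Lemma~\ref{signs} to be certain that the candidate minimum $c\,M_{n-1,j}$ genuinely persists with positive coefficient rather than being annihilated by the derivation term.
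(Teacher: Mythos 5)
Your proof is correct and takes essentially the same route as the paper's: the paper's one-line argument invokes Lemma \ref{deriv} together with the observation that initial terms propagate as $M \mapsto cM$ under $P \mapsto cP + P'$ and as $M \mapsto dM$ under $P \mapsto (d-1)P$, which is exactly the initial-term recursion you establish, with the chain and the $c$-versus-$d$ junction comparison left implicit. The only difference is that you spell out the no-cancellation step (via Lemma \ref{signs} and the nonnegativity of $G'$) needed to ensure that $cM_{n-1,j}$ survives in $cH_{n-1,j}+G'(H_{n-1,j})$, a point the paper glosses over.
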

\begin{proof}
  This follows from Lemma \ref{deriv} and the observation that if $P$ is a polynomial of degree $n$ and $M$ is the $n$-th initial term of $P$, then $cM$ is the $n+1$-st initial term of $cP$ and $dM$ is the $n+2$-nd initial term of $(d-1)P$. 
\end{proof}

\begin{lem}\label{structuresigns}
   Let $P_{n,j}=\sum_M a_M M$, the sum being over all monomials of degree at most $n$ (and of the same parity as $n$). If $M_0$ is a monomial of degree $n-2i$, with $i>0$, and $a_{M_0}\neq 0$ then there exists a monomial $\tilde M_0$ of degree $n-2i+2$ with $a_{\tilde M_0}\neq 0$ such that $M_0$ is obtained from $\tilde M_0$ be deleting a letter $d$. 
\end{lem}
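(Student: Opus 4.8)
The plan is to argue by strong induction on $n$, exploiting the two branches of the recursion defining $P_{n,j}$ and using Lemma \ref{signs} as the essential structural input. The point of Lemma \ref{signs} is that it rules out all cancellation within a fixed degree: every monomial of degree $n-2k$ occurring in $P_{n,j}$ has a coefficient of sign $(-1)^k$, so when $P_{n,j}$ is written as a sum of two pieces (via either branch) the contributions to any single monomial all share the same sign, and likewise the monomials produced by the derivation $G'$ never cancel since all its structure coefficients are $+1$. Thus a monomial occurs with nonzero coefficient in $P_{n,j}$ \emph{if and only if} it occurs with nonzero coefficient in one of the defining summands, and the whole problem reduces to tracking the position of the inserted/deleted letter $d$. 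The base cases $n\in\{0,1\}$ are vacuous, since then $P_{n,j}$ is homogeneous and there is no monomial of degree $n-2i$ with $i>0$.

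For the easier branch $f_n<j\le f_n+f_{n-1}$ we have $P_{n,j}=(d-1)R$ with $R=P_{n-2,j-f_n}$, and a degree-$(n-2i)$ monomial $M_0$ (with $i>0$) occurs in $dR$ or in $-R$. If $M_0$ occurs in $-R$ it is a monomial of $R$ of $R$-index $i-1$; for $i>1$ the inductive hypothesis applied to $R$ produces its parent $\tilde M_0$ in $R$, while for $i=1$ the monomial $M_0$ is top-degree in $R$ and I would take $\tilde M_0=dM_0\in dR$. If instead $M_0=dM$ occurs in $dR$, then $M$ has $R$-index $i>0$, so induction gives a factorization $M'=UdV$, $M=UV$ with $M'$ occurring in $R$; then $\tilde M_0=dM'=dUdV$ occurs in $dR$ and deleting its second $d$ returns $M_0$. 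In each sub-case $\deg\tilde M_0=n-2i+2$, and by the no-cancellation remark $\tilde M_0$ occurs with nonzero coefficient in $P_{n,j}$.

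The main branch is $1\le j\le f_n$, where $P_{n,j}=cQ+G'(Q)$ with $Q=P_{n-1,j}$, and $M_0$ occurs in $cQ$ or in $G'(Q)$. If $M_0=cM$ occurs in $cQ$, then $M$ has $Q$-index $i>0$ and induction supplies $M'=UdV$, $M=UV$ in $Q$; I would take $\tilde M_0=cM'\in cQ$, whose distinguished $d$ deletes to $cM=M_0$. If $M_0$ occurs in $G'(Q)$, I choose a monomial $M$ of $Q$ with nonzero coefficient such that $M_0$ appears in $G'(M)$; since $M$ has degree $(n-1)-2i$, i.e. $Q$-index $i>0$, induction yields $M=UV$ with parent $M'=UdV$ in $Q$. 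Because $G'$ is a derivation, $G'(M)=G'(U)\,V+U\,G'(V)$, so $M_0$ is either $\tilde U V$ with $\tilde U$ a monomial of $G'(U)$, or $U\tilde V$ with $\tilde V$ a monomial of $G'(V)$. In the first case I set $\tilde M_0=\tilde U\,d\,V$ and in the second $\tilde M_0=U\,d\,\tilde V$; in both cases $\deg\tilde M_0=n-2i+2$, deleting the displayed $d$ recovers $M_0$, and $\tilde M_0$ occurs as a monomial of $G'(M')$ because $G'(M')=G'(U)\,dV+U(dc)V+U\,d\,G'(V)$ contains precisely the blocks $G'(U)\,dV$ and $U\,d\,G'(V)$. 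Hence $\tilde M_0$ occurs in $G'(Q)\subseteq P_{n,j}$ with nonzero coefficient.

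The step I expect to be the main obstacle is this $G'(Q)$ case: unlike multiplication by $c$ or $d$, the derivation $G'$ can modify a letter in the interior of a word, so one cannot simply prepend a $d$. The device that makes it work is to transport the inductive factorization $M=UV$ through $G'$ via the Leibniz rule and then reinsert the $d$ on the correct side of the $G'$-modified block, matching the expansion $G'(M')=G'(U)\,dV+U(dc)V+U\,d\,G'(V)$. Checking that the reinserted word lands in one of these blocks, and confirming the degree bookkeeping, is the only genuinely delicate point; everything else is routine once Lemma \ref{signs} removes the possibility of cancellation.
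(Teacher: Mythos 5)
Your proof is correct and follows essentially the same route as the paper's: induction on $n$ through the two branches of the recursion defining $P_{n,j}$, with Lemma \ref{signs} ruling out cancellation between summands and the Leibniz rule for $G'$ governing where the letter $d$ is reinserted. Your detailed handling of the $G'(Q)$ case and of the $(d-1)R$ branch fills in exactly the steps the paper dismisses as ``not hard to see'' and ``left to the reader.''
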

\begin{proof}
We proceed by induction on $n$, the cases $n=0,1$ being empty.
We consider the two cases:
\begin{enumerate}
 \item if $P_{n,j}=cQ+Q'$ for some $Q\in A_{n-1}$ we let $Q=\sum b_m m$, the sum being over monomials $m$ of degree bounded by $n-1$ and of the same parity as $n-1$. 
The monomial $M_0$ will appear as a summand in $cm_0+m_0'$ for some $m_0$ such that $\deg(m_0)=n-1-2i$ and $b_{m_0}\neq 0$. 
By induction there exists $\tilde m_0$ of degree $n+1-2i$ such that $b_{\tilde m_0}\neq 0$ and such that $m_0$ is obtained from $\tilde m_0$ by deleting a letter $d$. 
Then it is not hard to see that in $c\tilde m_0+\tilde m_0'$ there is a monomial obtained by inserting a letter $d$ in $M_0$. Since, by Lemma \ref{signs}, all monomials of the same degree appearing in $Q$ have coefficients with the same sign, there cannot be cancellations when expanding $cQ+Q'$ and therefore we necessarily have $a_{\tilde M_0}\neq 0$. 
\item $P=(d-1)Q$ for some $Q\in A_{n-2}$. This is similar and simpler and is left to the reader.
\end{enumerate}
\end{proof}

We can now prove the main result of this section.

\begin{thm}\label{mainhomo}
 Let $k,n\in \mathbb N$. Then the homogeneous parts of degree $n$ of the polynomials $(d-1)^k P_{n,j}$, for $j\in[f_{n+1}]$, are linearly independent.
\end{thm}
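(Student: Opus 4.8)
The plan is to pin down, for each $j$, the lexicographically smallest $cd$-monomial occurring in the degree-$n$ homogeneous part of $(d-1)^kP_{n,j}$, to show that this minimal monomial is exactly $M_{n,j}$ and occurs with nonzero coefficient, and then to conclude by the triangularity furnished by Lemma \ref{init}. Since $-1$ is central we may expand $(d-1)^k=\sum_{i=0}^k\binom{k}{i}(-1)^{k-i}d^i$, so that (for our fixed $k$) the degree-$n$ part of $(d-1)^kP_{n,j}$ equals
\[
R_{n,j}\eqdef\sum_{i\geq 0}\binom{k}{i}(-1)^{k-i}\,d^i\,[P_{n,j}]_{n-2i},
\]
where $[P_{n,j}]_{m}$ denotes the degree-$m$ homogeneous part of $P_{n,j}$. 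It then suffices to show that $M_{n,j}$ is the $\prec$-minimal monomial of $R_{n,j}$ and that its coefficient there is nonzero.

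First I would rule out cancellation. Fix a $cd$-monomial $N$ of degree $n$ and let $p$ be the number of leading $d$'s of $N$. A monomial $d^im$ coming from the $i$-th summand equals $N$ precisely when $i\le p$ and $m$ is $N$ with its first $i$ letters $d$ removed; this $m$ has degree $n-2i$, so by Lemma \ref{signs} its coefficient in $P_{n,j}$ has sign $(-1)^i$. Hence the coefficient of $N$ in $R_{n,j}$ is
\[
[N]R_{n,j}=(-1)^k\sum_{i=0}^{\min(k,p)}\binom{k}{i}\,b_i,\qquad b_i\eqdef (-1)^i\bigl[N\text{ minus its first }i\text{ letters }d\bigr]_{P_{n,j}}\geq 0,
\]
a signed sum of nonnegative terms; in particular no cancellation can occur, and $[N]R_{n,j}\neq 0$ as soon as one $b_i\neq 0$. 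Applied to $N=M_{n,j}$, the term $i=0$ already gives $b_0=[M_{n,j}]P_{n,j}>0$ (positive by Lemma \ref{signs} and nonzero by definition of the initial term), so $M_{n,j}$ occurs in $R_{n,j}$ with nonzero coefficient.

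The main step, and the one I expect to be the real obstacle, is to show that every monomial occurring in $R_{n,j}$ is $\succeq M_{n,j}$; this is the point where multiplication by $(d-1)^k$ could a priori create smaller initial monomials. Such a monomial has the form $d^im$ with $m$ a monomial of $[P_{n,j}]_{n-2i}$. For $i=0$ this is $\succeq M_{n,j}$ by definition. For $i\ge 1$, iterating Lemma \ref{structuresigns} $i$ times produces a degree-$n$ monomial $\hat m$ occurring in $P_{n,j}$ from which $m$ is obtained by deleting $i$ letters $d$; equivalently $\hat m$ is $m$ with $i$ copies of $d$ inserted, whence $\hat m\succeq M_{n,j}$. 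The key combinatorial observation is then that, among all ways of inserting $i$ copies of $d$ into $m$, placing them all at the front yields the $\prec$-largest word, because $d$ is the maximal letter and lexicographic comparison proceeds left to right: writing $m=d^au$ with $u$ starting in $c$ (or empty), the front insertion $d^i m=d^{i+a}u$ has the longest possible leading $d$-run, so $d^im\succeq\hat m$. Combining, $d^im\succeq\hat m\succeq M_{n,j}$, as desired.

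Putting the last two steps together, $M_{n,j}$ is the $\prec$-minimal monomial of $R_{n,j}$ and occurs there with nonzero coefficient. By Lemma \ref{init} the monomials satisfy $M_{n,1}\prec\cdots\prec M_{n,f_{n+1}}$, so the $R_{n,j}$ have pairwise distinct minimal monomials, and a standard triangularity argument finishes the proof: in any relation $\sum_j\lambda_j R_{n,j}=0$, setting $j_0=\min\{j:\lambda_j\neq 0\}$, the monomial $M_{n,j_0}$ occurs in $R_{n,j_0}$ with nonzero coefficient but in no $R_{n,j}$ with $j>j_0$ (all of whose monomials are $\succeq M_{n,j}\succ M_{n,j_0}$), which forces $\lambda_{j_0}=0$, a contradiction. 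Hence all $\lambda_j$ vanish and the $R_{n,j}$ are linearly independent.
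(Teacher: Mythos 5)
Your proof is correct and follows essentially the same route as the paper's: the same expansion of the degree-$n$ coefficients of $(d-1)^kP_{n,j}$ as a signed binomial sum (the paper's Eq.\ (\ref{binom})), the same use of Lemma \ref{signs} to rule out cancellation, the same iteration of Lemma \ref{structuresigns} combined with the observation that placing the deleted $d$'s at the front is lexicographically maximal, and the same conclusion via the triangularity given by Lemma \ref{init}. (Incidentally, where the paper's proof says ``repeated applications of Lemma \ref{init}'' it means Lemma \ref{structuresigns}, which is the lemma you correctly invoke at that step.)
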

\begin{proof}
By Lemma \ref{init}, the result will follow if we show that the initial term of the homogeneous part of degree $n$ of $(d-1)^k P_{n,j}$ equals the initial term $M_{n,j}$ of the homogeneous part of degree $n$ of $P_{n,j}$.

We need the following notation: if $M$ is a monomial of degree $n$ we let $i(M)=\max\{i\in \mathbb N: M=d^i\cdot m\textrm{ for some monomial }m\}$ and for all $j\leq i(M)$ we let $M^{(j)}$ be the monomial obtained from $M$ be deleting its first $j$ factors so $M=d^j M^{(j)}$. For example, if $M=d^2cd$ then $i(M)=2$, $M^{(0)}=M$, $M^{(1)}=dcd$ and $M^{(2)}=cd$. 

Let $P_{n,j}=\sum_M a_M M$ and $(d-1)^kP_{n,j}=\sum_M b_M M$. Then, for every monomial $M$, $deg(M) \leq n+2k$, we have
\begin{equation}
\label{binom}
   b_M=\sum_{j=0}^{\min(i(M),k)}(-1)^{k-j}\binom{k}{j}a_{M^{(j)}}.
\end{equation}
If $M$ has degree $n$ we have that $(-1)^ja_{M^{(j)}}\geq 0$ for all $j \geq 0$ by Lemma \ref{signs} and in particular we have that $b_M\neq0$ if $a_M\neq0$. In particular $b_{M_{n,j}}\neq 0$.
Now we have to show that if $M_0$ is a monomial of degree $n$ such that $b_{M_0}\neq 0$ then $M_{n,j}\prec M_0$. It follows from (\ref{binom}) that $a_{M_0^{(j)}}\neq 0$ for some $0 \leq j \leq \min(i(M),k)$. 
Repeated applications of Lemma \ref{init} imply that there exists a monomial $\tilde M$ of degree $n$, with $a_{\tilde M}\neq 0$ such that $M_0^{(j)}$ can be obtained by deleting $j$ factors $d$ from $\tilde M$. 
Therefore $M_0$ can be obtained from $\tilde M$ by moving some factors $d$ to the left and so $\tilde M\prec M_0$; 
finally, $a_{\tilde M}\neq 0$ implies $M_{n,j}\prec \tilde M$, completing the proof.
 \end{proof}
 
The following consequence of Theorem \ref{mainhomo} is the main motivation for the results in this section.

\begin{cor}
\label{norel}
   Let $n,k\in \mathbb N$. Let $a_T\in \mathbb Q$, $T\in \two^n_s$ be such that 
\[
   \sum_{T\in \twoindex^n_s }a_T b(e,v)_T=0
\]
for all Coxeter groups $W$ and all $v\in W$ such that $\ell(v)=n+2k$. Then $a_T=0$ for all $T\in \two^n_s$. 
\end{cor}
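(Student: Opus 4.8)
The plan is to read the hypothesis as the vanishing of a single linear functional on the space of degree-$n$ peak functions, and then to use Theorem~\ref{mainhomo} to show that the elements produced by the $3$-complete group already span that space, so that the functional — and hence every $a_T$ — must vanish.

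\textbf{Reformulation.} First I would pass from the numbers $b(e,v)_T$ to the peak function $\widetilde F(e,v)=\sum_E b(e,v)_E L_E\in\Pi$. By the second assertion of Theorem~\ref{basis} we have $\widetilde F(e,v)=\sum_{T\in\two^{\ast}_s}b(e,v)_T D_T$, and since $\deg D_T=\ell(T)+1$ its degree-$(n+1)$ homogeneous component is $[\widetilde F(e,v)]_{n+1}=\sum_{T\in\two^{n}_s}b(e,v)_T D_T$. Thus the numbers $b(e,v)_T$ with $T\in\two^{n}_s$ are exactly the coordinates of $[\widetilde F(e,v)]_{n+1}$ in the basis $\{D_T:T\in\two^{n}_s\}$ of $\Pi_{n+1}$. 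Consequently the relation $\sum_T a_T b(e,v)_T=0$ says precisely that the functional $\lambda\eqdef\sum_{T\in\two^n_s}a_T\,D_T^{\ast}\in\Pi_{n+1}^{\ast}$ annihilates $[\widetilde F(e,v)]_{n+1}$ for every admissible $v$. Since $\{D_T^{\ast}\}$ is a dual basis, it now suffices to prove that the family $\{[\widetilde F(e,v)]_{n+1}\}$, as $v$ ranges over all Coxeter groups and over the relevant value of $\ell(v)$, spans all of $\Pi_{n+1}$.

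\textbf{Spanning via the $3$-complete group.} To build a spanning set I would move to the dual $cd$-picture, where the same coefficient vector reappears in $\widetilde\Psi_{e,v}=\sum_E b(e,v)_E\,\mu_{E^{op}}$, whose degree-$n$ component $[\widetilde\Psi_{e,v}]_n$ lies in the space $\mathscr W_n$ of degree-$n$ polynomials in $c$ and $d$. By Theorem~\ref{dualBB} and Corollary~\ref{cd} the spaces $\Pi_{n+1}$ and $\mathscr W_n$ are cut out by the same dual Bayer--Billera relations \eqref{dualrel}, so they are isomorphic in a way matching $[\widetilde F(e,v)]_{n+1}$ with $[\widetilde\Psi_{e,v}]_n$, and spanning in $\Pi_{n+1}$ is equivalent to spanning in $\mathscr W_n$. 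Now, by Corollaries~\ref{finalpyr} and \ref{finalpyrl} and Theorem~\ref{finalsvs}, the set $A_N$ spans $V_N=\mathrm{Span}\{\widetilde\Psi_{e,v}:v\in W_N\}$, and the operator $(d-1)$ advances the first index by $2$ in the recursion defining $A_N$; hence the $f_{n+1}$ polynomials $(d-1)^kP_{n,j}$ all lie in $A_{n+2k}$ and are linear combinations of the indices $\widetilde\Psi_{e,v}$ with $v\in W_{n+2k}$. Taking degree-$n$ components is linear, so each $[(d-1)^kP_{n,j}]_n$ is a linear combination of the $[\widetilde\Psi_{e,v}]_n$; by Theorem~\ref{mainhomo} these $f_{n+1}$ components are linearly independent, hence a basis of $\mathscr W_n$. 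Therefore the $[\widetilde\Psi_{e,v}]_n$ span $\mathscr W_n$, equivalently the $[\widetilde F(e,v)]_{n+1}$ span $\Pi_{n+1}$, whence $\lambda=0$ and $a_T=0$ for all $T$.

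\textbf{Main obstacle.} The real work lies in the two identifications of the middle step rather than in the final linear algebra. First, one must check that the $op$-relabeling carries the relations \eqref{dualrel} to themselves, so that the isomorphism $\Pi_{n+1}\cong\mathscr W_n$ intertwining $\widetilde F(e,v)$ and $\widetilde\Psi_{e,v}$ really exists; here the symmetry already visible in Lemma~\ref{relga} and in the relation $\beta_E=\beta_{\bar E}$ is what is used. Second, and most importantly, one must pin down the length of the elements $v\in W_{n+2k}$ realizing $(d-1)^kP_{n,j}$: because every edge of the Bruhat graph changes length by an odd amount, all paths in $B(e,v)$ have length $\equiv\ell(v)\pmod 2$, so $\widetilde\Psi_{e,v}$ is supported in degrees of parity $\ell(v)-1$. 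This forces $\ell(v)\equiv n+1\pmod 2$ for $[\widetilde\Psi_{e,v}]_n$ to be nonzero, that is $\ell(v)=n+2k+1$ — precisely the length of the elements of $W_{n+2k}$ — and it is this parity bookkeeping that matches the degree-$n$ component of $(d-1)^kP_{n,j}$ to the admissible intervals $[e,v]$. Once these points are settled, Theorem~\ref{mainhomo} supplies all the essential content.
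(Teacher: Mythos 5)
Your proof is correct and takes essentially the same route as the paper's: both reduce the statement, via Theorem \ref{basis} and Corollary \ref{cd}, to the fact that the degree-$n$ homogeneous components of the complete $cd$-indices $\tilde\Psi_{e,v}$ span the $f_{n+1}$-dimensional space of homogeneous $cd$-polynomials of degree $n$, which is exactly what Theorem \ref{mainhomo} combined with the $A_N$/$W_N$ recursion (Corollaries \ref{finalpyr}, \ref{finalpyrl} and Theorem \ref{finalsvs}) provides, after which the vanishing of the functional forces every $a_T$ to vanish. The only divergences are bookkeeping points that you in fact treat more carefully than the paper: the explicit $op$-twist identifying $b(e,v)_T$ with $cd$-coefficients (the paper compresses this into ``$b(e,v)\in \mathscr B_n$''), and the parity correction $\ell(v)=n+2k+1$, i.e.\ $v$ of the length of elements of $W_{n+2k}$, where the printed statement's $n+2k$ and the printed proof's $W_{n+2k+1}$ are each off by one.
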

\begin{proof}If $P$ is a $cd$-polynomial let $P^{(n)}$ be the homogeneous component of degree $n$ of $P$.
   By Theorem \ref{mainhomo} and our definitions we have that the $cd$-polynomials $\tilde \Psi_{e,v}^{(n)}$, as $v$ ranges in $W_{n+2k+1}$, span the whole space of homogeneous $cd$-polynomials of degree $n$, which has dimension $f_{n+1}$. 
But by definition of the complete $cd$-index we have
\[
  \tilde \Psi_{e,v}^{(n)}=\sum_{E\in \twoindex^n} b(e,v)_E \mu_{E^{op}},
\]
and so the result now follows since $b(e,v)\in \mathscr B_{n}$.

\end{proof}

As an immediate consequence we obtain the following result which implies that the formula in Theorem \ref{finalmain} cannot be ``linearly'' simplified, even for lower intervals.

\begin{cor}
\label{indep}
   Let $a_T\in \mathbb Q$, $T\in \two^*_s$ be such that \[\sum_{T\in \twoindex^*_s}a_T b(e,v)_T=0\] for all Coxeter groups $W$ and $v\in W$. Then $a_T=0$ for all $T\in \two^*_s$.
\end{cor}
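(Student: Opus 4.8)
The plan is to deduce Corollary \ref{indep} from Corollary \ref{norel} by decoupling the hypothesis, which mixes sparse sequences $T$ of all lengths, into homogeneous pieces and handling one length at a time. The structural fact I would exploit is that $b(e,v)_T$ counts directed paths in the Bruhat graph from $e$ to $v$ of length $\ell(T)+1$; since each edge of such a path strictly increases Coxeter length, $b(e,v)_T=0$ as soon as $\ell(T)+1>\ell(v)$. Hence, once the length of $v$ is fixed, only finitely many lengths of $T$ can contribute, and the largest contributing length is pinned down by $\ell(v)$.

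Concretely, I would prove by induction on $n$ that $a_T=0$ for all $T\in\two^n_s$. In the inductive step I would fix an arbitrary Coxeter system $(W,S)$ and an arbitrary $v\in W$ with $\ell(v)=n+1$. Then every $T$ with $\ell(T)>n$ contributes $b(e,v)_T=0$ by the length bound above, while every $T$ with $\ell(T)<n$ contributes $a_T\,b(e,v)_T=0$ by the induction hypothesis. Consequently the standing hypothesis $\sum_{T\in\twoindex^*_s}a_T\,b(e,v)_T=0$ collapses to
\[
\sum_{T\in\twoindex^n_s}a_T\,b(e,v)_T=0 .
\]
Since $W$ and $v$ (of length $n+1$) were arbitrary, Corollary \ref{norel} then forces $a_T=0$ for all $T\in\two^n_s$, closing the induction; the case $n=0$ is simply the instance with vacuous induction hypothesis.

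The only point I would treat with care — and the step I regard as the real (if modest) obstacle — is checking that choosing $\ell(v)=n+1$ genuinely isolates the length-$n$ piece: the parity constraint $\ell(T)\equiv\ell(v)-1\pmod 2$ forced by the Bruhat graph guarantees that the surviving shorter sparse sequences are exactly ones already killed by the induction hypothesis, and one must confirm that $\ell(v)=n+1$ is the length for which \ref{norel} applies to degree $n$. Beyond this bookkeeping the reduction is indeed \emph{immediate}, since Corollary \ref{norel} already carries the hard content, namely the simultaneous (over all Coxeter groups) linear independence of the functions $b(e,\cdot)_T$ within a single degree.
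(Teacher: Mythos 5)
Your proof is correct and is exactly the paper's (entirely implicit) argument: the paper presents Corollary \ref{indep} as an ``immediate consequence'' of Corollary \ref{norel} with no details, and your induction on $\ell(T)$, combined with the vanishing $b(e,v)_T=0$ whenever $\ell(T)+1>\ell(v)$, is precisely the reduction that is being left to the reader. On the one point you flag: the printed hypothesis $\ell(v)=n+2k$ in Corollary \ref{norel} is an off-by-one slip in the paper itself --- by parity of Bruhat paths, $b(e,v)_T$ with $T\in\two^n_s$ can only be nonzero when $\ell(v)\equiv n+1 \pmod 2$, and the proof of \ref{norel} really rests on Theorem \ref{mainhomo} applied to the polynomials $(d-1)^kP_{n,j}\in A_{n+2k}$, i.e.\ on intervals $[e,v]$ with $v\in W_{n+2k}$ of length $n+2k+1$ --- so the intended hypothesis is $\ell(v)=n+2k+1$, and your application at $\ell(v)=n+1$ is its $k=0$ instance.
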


\subsection{Another family of complete $cd$-indices and a conjecture}

Now we want to construct another family of complete $cd$-indices.
\begin{thm}
   Let $(W,S)$ be a 3-complete Coxeter system, $e\neq v\in W$ and $s\in S$ be such that $s\not\leq v$. Then
\[
   \tilde \Psi_{s,svs}=\tilde \Psi_{e,v}\cdot c+\sum_{x\in (e,v)}\tilde \Psi_{e,x}\cdot d \cdot \tilde \Psi_{x,v}.
\]
\end{thm}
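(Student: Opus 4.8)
The plan is to compute $\tilde\Psi_{s,svs}=\sum_{\Delta\in B(s,svs)}m_\ll(\Delta)$ directly, using the reflection ordering $\ll$ furnished by Lemma \ref{szszs} for $s$ and $P=\langle S\setminus\{s\}\rangle$, and to read off the four $cd$-contributions $a,b,ab,ba$ from the local shape of each path at its single ``crossing''. Since this $\ll$ is built as in Proposition \ref{ll}, it also enjoys the conjugation invariance $t\ll t'\Leftrightarrow sts\ll st's$ for $t,t'\in P$ from Corollary \ref{goodorder}, and $s$ is its unique maximal element; together with the chain $t\ll szsz^{-1}s\ll sts\ll s$ these are the only facts about $\ll$ I will need. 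I would stress that no averaging over $\ll$ and $\ll_s$ is needed here: the asymmetry of the target (an $\tilde\Psi_{e,v}c$ but no $c\,\tilde\Psi_{e,v}$, and a full $d$) falls out precisely because the two halves of $d=ab+ba$ arise from two geometrically distinct crossings.

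First I would describe $[s,svs]$. As in the proof of Theorem \ref{finalsvs}, the Bruhat graph of $[e,svs]$ splits into the columns $A=\{sxs\}$, $B=\{sx\}$, $C=\{xs\}$, $D=\{x\}$ (for $x\le v$), and every directed path runs through $D$, then through $B$ or through $C$ (never mixing), then through $A$. Passing to $[s,svs]$ deletes $D$ (its nontrivial elements are incomparable to $s$), and I would check that the first step out of $s=es=se$ cannot land in $A$: an edge $s\to sxs$ would force $s^{-1}(sxs)=xs\in T$, but $s\in\Des_R(xs)$, while the computation $x^{-1}(\alpha_s)=\alpha_s+\sum_{i\ne s}d_i\alpha_i$ with $d_i\ge 0$ gives $sx^{-1}(\alpha_s)>0$, i.e. $s\notin\Des_L(xs)$ for $x\ne e$, contradicting $\Des_L(t)=\Des_R(t)$ for reflections. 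Hence every $\Delta\in B(s,svs)$ runs $s\to\cdots\to$ (in $B$ or in $C$) $\to$ (one crossing edge) $\to$ (in $A$) $\to svs$, with a well-defined crossing value $x(\Delta)\in(e,v]$ and crossing type $B$ or $C$. Via $sy\leftrightarrow y$ (type $B$, reflections unchanged) or $ys\leftrightarrow y$ (type $C$, reflections conjugated by $s$) the segment before the crossing is in bijection with $B(e,x)$; via $sys\leftrightarrow y$ the $A$-segment after is in bijection with $B(x,v)$; and when $x=v$ the $A$-segment is empty and the before-segment matches a path in $B(e,v)$.

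It remains to evaluate the letters. By Corollary \ref{goodorder} conjugation by $s$ preserves $\ll$ on reflections of $P$, so the internal letters of each $B$-, $C$- or $A$-segment reproduce $m_\ll$ of the corresponding path in $[e,x]$, $[x,v]$ or $[e,v]$. For the crossing I would use Lemma \ref{szszs}. A type-$B$ crossing uses $(sx)^{-1}(sxs)=s$: compared with the preceding $P$-reflection it yields $a$, and with the following $st's\ll s$ it yields $b$. A type-$C$ crossing uses $(xs)^{-1}(sxs)=sx^{-1}sxs$ (which collapses to a simple reflection $r\ll sts$ when $\ell(x)=1$): compared with its neighbours $sts$ it yields $b$ then $a$, since $sx^{-1}sxs\ll sts$. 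Thus a path crossing at $x\in(e,v)$ contributes $m(\rho_1)\,ab\,m(\rho_2)$ (type $B$) or $m(\rho_1)\,ba\,m(\rho_2)$ (type $C$), and a path crossing at $x=v$ contributes $m(\rho)\,a$ (type $B$) or $m(\rho)\,b$ (type $C$). Summing the four families gives
\[
\tilde\Psi_{s,svs}=\tilde\Psi_{e,v}a+\tilde\Psi_{e,v}b+\sum_{x\in(e,v)}\tilde\Psi_{e,x}\,ab\,\tilde\Psi_{x,v}+\sum_{x\in(e,v)}\tilde\Psi_{e,x}\,ba\,\tilde\Psi_{x,v}=\tilde\Psi_{e,v}c+\sum_{x\in(e,v)}\tilde\Psi_{e,x}\,d\,\tilde\Psi_{x,v}.
\]

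The main obstacle is the purely structural claim that every path in $B(s,svs)$ has this clean shape ``$B$ or $C$, one crossing, then $A$'', i.e. that the two columns are not mixed and that $A$ is never re-entered from below. I would obtain it by transporting the analysis already carried out for $[e,svs]$ in the proof of Theorem \ref{finalsvs} (the Bruhat graph of $[s,svs]$ is the induced subgraph on the vertices $\ge s$), the only genuinely new verification being the absence of a direct edge $s\to sxs$, handled by the descent/height argument above. Everything else is the bookkeeping of letters through Lemma \ref{szszs} and Corollary \ref{goodorder}.
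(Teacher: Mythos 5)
Your proposal is correct and follows essentially the same route as the paper's own proof: the paper likewise splits $B(s,svs)$ into the paths whose first step enters the column $\{sx_1\}$ and those entering the column $\{x_1s\}$, maps each family bijectively onto $\bigcup_{x\in(e,v]}B(e,x)\times B(x,v)$, reads off the crossing contributions $ab$, $ba$, $a$, $b$ from the ordering of Lemma \ref{szszs}, and sums the two families to get $\tilde\Psi_{e,v}\cdot c+\sum_{x\in(e,v)}\tilde\Psi_{e,x}\cdot d\cdot\tilde\Psi_{x,v}$. The only cosmetic differences are that you spell out the exclusion of a direct edge $s\to sxs$ (left implicit in the paper) and that you invoke conjugation-invariance of $\ll$ itself --- a true fact, though it needs a one-line check for the height ordering rather than the citation of Corollary \ref{goodorder}, which concerns a different weight function --- whereas the paper sidesteps this by evaluating the image paths with the lower $s$-conjugate $\ll_s$ and appealing to the ordering-independence of the complete $cd$-index.
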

 \begin{proof}
    Consider a path $\Delta \in B(s,svs)$. Then two cases occur: either $\Delta$ is of the form $\Delta=(s\rightarrow sx_1\cdots )$ or  $\Delta=(s\rightarrow x_1s\cdots )$ for some $e\neq x_1\leq v$. Call $B_1(s,svs)$ the family of paths of the first kind and $B_2(s,svs)$ the family of paths of the second kind.
We claim that there is a bijection $B_1(s,svs)\longleftrightarrow \bigcup_{x\in (e,v]}B(e,x)\times B(x,v)$. 
Furthermore, if we consider on paths in $B_1(s,svs)$ and in $B(e,x)$ the order $\ll$ described in Lemma \ref{szszs} and on paths in $B(x,v)$ the lower $s$-conjugate $\ll_s$ of $\ll$ we claim  that if $\Delta\in B_1(s,svs)$ corresponds to $(\Delta',\Delta'')\in B(e,x)\times B(x,v)$ with $x\neq v$ then $m_\ll(\Delta)=m_\ll(\Delta')\cdot ab \cdot m_{\ll_s}(\Delta'')$ and if $(\Delta',\Delta'')\in B(e,v)\times B(v,v)$ then $m_\ll(\Delta)=m_\ll(\Delta')\cdot a$. 
The bijection is defined as follows: if $\Delta\in B_1(s,svs)$ then it is necessarily of the form
 \[\Delta=(s\stackrel{t_1}{\longrightarrow} sx_1\stackrel{t_2}{\longrightarrow}\cdots \stackrel{t_i}{\longrightarrow}{sx_i}\stackrel{s}{\longrightarrow}sx_is\stackrel{st_{i+1}s}{\longrightarrow}\cdots \stackrel{st_{r}s}{\longrightarrow}sx_{r}s).
\]
for some $1\leq i \leq r$. 
Then we define $\Delta'=(e\stackrel{t_1}{\longrightarrow} x_1\stackrel{t_2}{\longrightarrow}\cdots \stackrel{t_i}{\longrightarrow}{x_i})$ and $\Delta''=(x_i\stackrel{t_{i+1}}{\longrightarrow}\cdots \stackrel{t_{r}}{\longrightarrow}x_{r})$. The fact that this is a bijection is clear and that the monomial $m_\ll(\Delta)$ satisfies the stated properties follows from Lemma \ref{szszs} and the definition of $\ll_s$. We deduce that
\begin{align*}
   \sum_{\Delta\in B_1(s,svs)}m_\ll (\Delta)&=\sum_{\Delta'\in B(e,v)}m_\ll(\Delta')\cdot a+\sum_{x\in(e,v)}\sum_{\substack{\Delta'\in B(e,x)\\ \Delta''\in B(x,v)}}m_\ll(\Delta')\cdot ab\cdot m_{\ll_s}(\Delta'')\\
&=\tilde \Psi_{e,v}\cdot a+\sum_{x\in (e,v)}\tilde \Psi_{e,x}\cdot ab \cdot \tilde \Psi_{x,v}.
\end{align*}

We also claim that there is a bijection $B_2(s,svs)\longleftrightarrow \bigcup_{x\in (e,v]}B(e,x)\times B(x,v)$ such that if $\Delta$ corresponds to $(\Delta',\Delta'')\in B(e,x)\times B(x,v)$ with $x\neq v$ then $m_\ll(\Delta)=m_{\ll_s}(\Delta')\cdot ba \cdot m_{\ll_s}(\Delta'')$ and if $(\Delta',\Delta'')\in B(e,v)\times B(v,v)$ then $m_{\ll}(\Delta)=m_{\ll_s}(\Delta')\cdot b$. In this case, if $\Delta\in B_2(s,svs)$, then $\Delta$ is of the form
\[
   \Delta=(s\stackrel{st_1s}{\longrightarrow} x_1s\stackrel{st_2s}{\longrightarrow}\cdots \stackrel{st_is}{\longrightarrow}{x_is}\stackrel{sx_i^{-1}sx_is}{\longrightarrow}sx_is\stackrel{st_{i+1}s}{\longrightarrow}\cdots \stackrel{st_{r}s}{\longrightarrow}sx_{r}s),
\]
and we define $\Delta'=(e\stackrel{t_1}{\longrightarrow} x_1\stackrel{t_2}{\longrightarrow}\cdots \stackrel{t_i}{\longrightarrow}{x_i})$ and $\Delta''=(x_i\stackrel{t_{i+1}}{\longrightarrow}\cdots \stackrel{t_{r}}{\longrightarrow}x_{r})$.
It follows that
\[
 \sum_{\Delta\in B_2(s,svs)}m_\ll (\Delta)  =\tilde \Psi_{e,v}\cdot b+\sum_{x\in (e,v)}\tilde \Psi_{e,x}\cdot ba \cdot \tilde \Psi_{x,v},
\]
and the result follows.
\end{proof}

If $W_n$ is the subset of elements of the 3-complete Coxeter group constructed in the previous subsection, this result allows us to easily compute all the complete $cd$-indices of the Bruhat intervals $[s_{n+1},s_{n+1}vs_{n+1}]$ as $v$ ranges in $W_{n-1}$. 
This has allowed us to verify the following conjecture for $n\leq 17$.

\begin{conj}
For all $n>0$ the complete $cd$-indices of all Bruhat intervals of rank $n+1$ span the whole space of $cd$-polynomials of degree bounded by $n$ whose nonzero homogeneous components have degree of the same parity as $n$. 
\end{conj}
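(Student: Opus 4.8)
The plan is to prove the stronger statement that already the complete $cd$-indices of rank-$(n+1)$ intervals in \emph{$3$-complete} Coxeter systems span the target space; since every such index is in particular a complete $cd$-index of a Bruhat interval, this suffices. Write $\widehat V_n$ for the space of $cd$-polynomials of degree at most $n$ whose nonzero homogeneous components all have degree $\equiv n \pmod 2$, so that $\dim \widehat V_n = f_{n+1}+f_{n-1}+f_{n-3}+\cdots = f_{n+1}+\dim \widehat V_{n-2}$, and let $U_n \subseteq \widehat V_n$ be the span of all $\tilde\Psi_{e,v}$ with $[e,v]$ of rank $n+1$ in a $3$-complete group. I would argue by induction on $n$, the small cases being covered by the verification already mentioned for $n\le 17$.

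For the inductive step I would feed in two inputs. First, the top degree is under control: taking $k=0$ in Theorem \ref{mainhomo}, the degree-$n$ homogeneous parts of $P_{n,1},\ldots,P_{n,f_{n+1}}$ --- which lie in $V_n\eqdef\mathrm{Span}\{P_{n,j}\}\subseteq U_n$ since they are combinations of the complete $cd$-indices of $W_n$ --- form a basis of the degree-$n$ homogeneous space $H_n$ (of dimension $f_{n+1}$). Thus the projection $\pi_n\colon \widehat V_n \to H_n$ onto the top component maps $U_n$ onto $H_n$, and $\dim V_n=f_{n+1}$. Second, the lower components can be reached by a degree shift: Theorem \ref{finalsvs} gives $\tilde\Psi_{e,rvs}-\tilde\Psi_{e,svs}=(d-1)\tilde\Psi_{e,v}$, where both $[e,rvs]$ and $[e,svs]$ have rank $n+1$ when $\ell(v)=n-1$. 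Hence $(d-1)\tilde\Psi_{e,v}\in U_n$ for every rank-$(n-1)$ index $\tilde\Psi_{e,v}$; realizing the rank-$(n-1)$ intervals of the inductive hypothesis inside larger $3$-complete groups (a parabolic interval $[e,v]_P$ agrees with $[e,v]_W$) yields $(d-1)\widehat V_{n-2}\subseteq U_n$. Since multiplication by $d-1$ is injective on $cd$-polynomials (its effect on the top component is $d\cdot(-)$), the subspace $(d-1)\widehat V_{n-2}$ has dimension $\dim \widehat V_{n-2}$.

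To finish I would combine the two inputs by a dimension count. Because $\dim \widehat V_n = f_{n+1}+\dim \widehat V_{n-2}$, $\dim V_n = f_{n+1}$, and $\dim (d-1)\widehat V_{n-2}=\dim \widehat V_{n-2}$, the equality $U_n=\widehat V_n$ would follow at once from the transversality statement
\[
V_n \cap (d-1)\widehat V_{n-2} = \{0\}.
\]

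This last step is where I expect the real difficulty to lie, and it is presumably why the statement remains only a conjecture. The two subspaces are \emph{not} separated by their leading terms: the top components of $(d-1)\widehat V_{n-2}$ fill the proper subspace $d\,H_{n-2}\subseteq H_n$, which is entirely contained in $\pi_n(V_n)=H_n$, so an element of $V_n$ and an element of $(d-1)\widehat V_{n-2}$ can share the same degree-$n$ part and a naive initial-term argument does not close the gap. Establishing transversality would therefore require tracking the \emph{full} polynomials rather than their tops --- extending the sign and initial-term analysis of Lemmas \ref{signs}, \ref{structuresigns} and \ref{init} to describe precisely which lower-degree tails accompany the degree-$n$ monomials in $V_n$ versus in $(d-1)\widehat V_{n-2}$, and showing that these two patterns can never coincide nontrivially. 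One could alternatively try to bypass transversality by proving directly that the pure top-degree space $H_n$ is contained in $U_n$, using in addition the pyramid operation $P\mapsto cP+G'(P)$ of Corollary \ref{finalpyr} and the operation $P\mapsto Pc+D_d(P)$ underlying the preceding theorem; together with $(d-1)\cdot$ and the inductive hypothesis these furnish a rich supply of rank-$(n+1)$ indices, and a successful decoupling of the top component from the lower ones would immediately give $H_n + (d-1)\widehat V_{n-2} = \widehat V_n$, and hence the conjecture.
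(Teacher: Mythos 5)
First, a point of context: the paper does \emph{not} prove this statement --- it is left as a conjecture, supported only by machine verification for $n\leq 17$ (carried out via the unlabelled theorem in \S 6.5, which computes the complete $cd$-indices of the \emph{non-lower} intervals $[s_{n+1},s_{n+1}vs_{n+1}]$). So the only question is whether your argument closes the gap, and it does not: worse than being merely unproven, your key transversality claim $V_n\cap (d-1)\widehat V_{n-2}=\{0\}$ is \emph{false} for every $n\geq 2$. By the defining recursion of the spanning set $A_n$ one has $P_{n,j}=(d-1)P_{n-2,j-f_n}$ for $f_n<j\leq f_{n+1}$, so $(d-1)V_{n-2}\subseteq V_n$; since also $V_{n-2}\subseteq\widehat V_{n-2}$ (by Lemma \ref{signs}, each $P_{n-2,i}$ is a $cd$-polynomial of degree at most $n-2$ whose components have the right parity), the intersection contains the $f_{n-1}$-dimensional space $(d-1)V_{n-2}$. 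This is not an accident of the chosen basis: Theorem \ref{finalsvs} manufactures $(d-1)\tilde\Psi_{e,v}$ precisely as the difference $\tilde\Psi_{e,rvs}-\tilde\Psi_{e,svs}$ of two rank-$(n+1)$ indices, so the degree-shifted polynomials you add as your ``second input'' were already sitting inside your first.

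This is fatal to the dimension count itself, not just to one way of proving it: since $\dim\bigl(V_n\cap(d-1)\widehat V_{n-2}\bigr)\geq f_{n-1}$, one gets $\dim\bigl(V_n+(d-1)\widehat V_{n-2}\bigr)\leq f_{n+1}+\dim\widehat V_{n-2}-f_{n-1}<\dim\widehat V_n$, so the two ingredients you combine provably cannot span $\widehat V_n$, no matter how precisely their intersection is analyzed. Any successful induction must inject at least one genuinely new family of rank-$(n+1)$ indices. The natural candidates are the pyramid images $\{cP+G'(P):P\in\widehat V_{n-1}\}$, available from Corollary \ref{finalpyr} together with the inductive hypothesis one rank down, which you mention only in passing in your final paragraph, or the intervals $[s,svs]$ of \S 6.5, which are not lower intervals and hence lie outside your $U_n$ as you defined it. Whether these additional families suffice is exactly what remains open: the authors, with all of this machinery in hand, could not settle it and left it as a conjecture, so a complete proof along your lines would need the decoupling argument you defer, not the transversality reduction you propose.
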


This conjecture implies the following one, which in turn would imply that the formula obtained in Theorem \ref{finalmain} cannot be ``linearly'' simplified, even if we content ourselves with a formula that only holds for all Bruhat intervals of a fixed rank.

\begin{conj}
Let $n>0$. Then there are no nontrivial relations of the form
\[\sum_{i\in\{n,n-2,\ldots\}}\sum_{T\in \twoindex^i_s}a_Tb(u,v)_T=0,   
\]
valid for all Coxeter groups $W$ and all $u,v\in W$ such that $\ell(v)-\ell(u)=n$.
\end{conj}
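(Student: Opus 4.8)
The plan is to derive this statement from the preceding conjecture on the spanning of complete $cd$-indices, in exact parallel with the way Corollary \ref{norel} was derived from Theorem \ref{mainhomo}; the only change is that the single top homogeneous degree is replaced by the whole filtered $cd$-index. The bridge is the dictionary already exploited in the proof of Corollary \ref{norel}: for any Bruhat interval $[u,v]$ one has
\[
\tilde\Psi_{u,v}=\sum_{E\in\two^*}b(u,v)_E\,\mu_{E^{op}},
\]
$\tilde\Psi_{u,v}$ is a $cd$-polynomial, and by Theorem \ref{basis} the sparse coefficients are genuine coordinates: writing $\widetilde F(u,v)=\sum_{T\in\two^*_s}b(u,v)_T\,D_T$ exhibits $(b(u,v)_T)_{T\in\two^*_s}$ as the coordinates of $\widetilde F(u,v)$ in a basis of $\Pi$. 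Transporting this through the identification of $cd$-polynomials with the solution space of the dual Bayer--Billera relations (Corollary \ref{cd}), the assignment $\tilde\Psi\mapsto\big(b_T(\tilde\Psi)\big)_{T\in\two^*_s}$, where $b_T(\tilde\Psi)$ is the coordinate along $\mu_{T^{op}}$, is a linear isomorphism between the space of $cd$-polynomials of degree at most $n$ (with homogeneous components of the parity of $n$) and $\mathbb{Q}^{\,\{T\in\two^*_s\,:\,\ell(T)\in\{n,n-2,\dots\}\}}$.

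With this in hand I would first reformulate the putative relation as a linear functional. Given scalars $a_T$, define $L$ on the above space of $cd$-polynomials by $L(\tilde\Psi)=\sum_{i}\sum_{T\in\two^i_s}a_T\,b_T(\tilde\Psi)$, so that $L(\tilde\Psi_{u,v})=\sum_i\sum_{T\in\two^i_s}a_T\,b(u,v)_T$ is exactly the left-hand side of the relation. By the freeness recalled above, the functionals $\tilde\Psi\mapsto b_T(\tilde\Psi)$ form a basis of the dual space, so $(a_T)\mapsto L$ is a linear isomorphism onto that dual; in particular $L=0$ if and only if all $a_T=0$. The hypothesis is precisely that $L$ annihilates every $\tilde\Psi_{u,v}$ of the prescribed rank. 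Invoking the preceding conjecture, these $\tilde\Psi_{u,v}$ form a spanning set of the entire space on which $L$ is defined; hence $L$ vanishes identically and all $a_T=0$. This argument is unconditional once the spanning is known, and it reduces the statement entirely to the preceding conjecture.

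It remains to attack the genuine content, the spanning itself, along the lines set up in this section. By Corollaries \ref{finalpyr} and \ref{finalpyrl} and Theorem \ref{finalsvs}, the operators $P\mapsto cP+G'(P)$ and $P\mapsto(d-1)P$ are realized by passing from $[e,v]$ to a pyramid $[e,vs]$ and to an interval $[e,svs]$ inside a suitable $3$-complete Coxeter group, so that the recursively defined family $A_n=\{P_{n,j}\}$ consists of honest complete $cd$-indices (up to the linear combinations recorded in the recursion). The top-degree half of the spanning is exactly Theorem \ref{mainhomo}: the degree-$n$ initial terms $M_{n,j}$ of the $(d-1)^kP_{n,j}$ are distinct and, by Lemmas \ref{signs}--\ref{structuresigns}, survive the application of $(d-1)^k$. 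To obtain the full statement I would upgrade this leading-term analysis so that it controls every homogeneous component simultaneously: order all $cd$-monomials of degree at most $n$ by refining the lexicographic order $\prec$ of Lemmas \ref{deriv} and \ref{init} by decreasing degree, and show that the complete polynomials $\tilde\Psi_{e,v}$ with $v\in W_n$, together with their images under the rank-raising operator $(d-1)$, have pairwise distinct leading monomials in this global order. The sign-coherence of Lemma \ref{signs} and the ``insertion of a $d$'' mechanism of Lemma \ref{structuresigns} are the tools that should prevent cancellations between components of different degrees.

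The hard part will be precisely this interaction between homogeneous components of different degrees. Theorem \ref{mainhomo} isolates the top degree, where the recursion $P_{n,j}=cP_{n-1,j}+G'(P_{n-1,j})$, respectively $(d-1)P_{n-2,\,j-f_n}$, produces a clean triangular pattern of initial terms; in lower degrees the same recursion mixes contributions from many different $P_{n-1,j}$, and one must rule out that these lower-degree tails create a linear dependence among the complete $cd$-indices even though the top parts are independent. Equivalently, one must show that no nonzero $cd$-polynomial of degree strictly less than $n$ lies in the span of the differences of complete $cd$-indices of intervals of the fixed rank. This is the obstruction that keeps the spanning at the level of a conjecture, verified only for $n\le 17$; a proof would presumably require either a closed-form description of the lower homogeneous components of the $P_{n,j}$, or an explicit basis of the filtered $cd$-space realizing the dimension count $\sum_{i\equiv n,\ i\le n}f_{i+1}$ degree by degree.
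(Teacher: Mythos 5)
You should be aware at the outset that the statement you were asked to prove is a \emph{conjecture} in the paper: the authors give no proof of it, and their entire ``argument'' is the one-line remark that the preceding conjecture (that the complete $cd$-indices of all Bruhat intervals of rank $n+1$ span the space of $cd$-polynomials of degree at most $n$ with homogeneous components of the parity of $n$) implies it. Your first two paragraphs supply exactly that implication, and they do so correctly: degree by degree, the coefficient functionals $P\mapsto[\mu_{T^{op}}](P)$ for sparse $T$ of length $i$ restrict to a basis of the dual of the space of homogeneous $cd$-polynomials of degree $i$ --- this follows from Corollary \ref{cd} together with Theorem \ref{basis} (the coefficient vector of a $cd$-polynomial satisfies the dual Bayer--Billera relations, and the $op$-twist is harmless since reversal of words preserves $cd$-polynomials) plus the dimension count $f_{i+1}=|\two^i_s|$; hence a putative relation defines a functional $L$ on the filtered space which, under the spanning hypothesis, annihilates everything, forcing all $a_T=0$. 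This is the same mechanism as in the paper's Corollary \ref{norel}, and it is what the authors mean by ``this conjecture implies the following one.'' One caveat you handled only implicitly: as printed, the conjecture pairs $\ell(v)-\ell(u)=n$ with string lengths in $\{n,n-2,\dots\}$, yet descent strings of paths in $[u,v]$ have length at most $\ell(u,v)-1$ and of that parity, so the statement is vacuous unless one reads intervals of rank $n+1$ (consistently with the preceding conjecture and with Corollary \ref{norel}); your phrase ``of the prescribed rank'' silently makes this normalization, and it should be made explicit.

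The genuine gap is the one you yourself name: your third and fourth paragraphs do not prove the spanning conjecture, so the proposal is only a conditional proof --- which, to be fair, is exactly the status of the statement in the paper. Your suggested strengthening of Theorem \ref{mainhomo} (refine the lexicographic order $\prec$ by decreasing degree and show the full polynomials $\tilde\Psi_{e,v}$, $v\in W_n$, together with their $(d-1)$-images, have distinct global leading monomials) stalls precisely where you say it does: Lemma \ref{init} gives triangularity only in the top degree, and Lemmas \ref{signs} and \ref{structuresigns} prevent cancellation within the top component after multiplying by $(d-1)^k$, but nothing in the paper controls the lower homogeneous tails of distinct $P_{n,j}$ against one another, and distinct top-degree initial terms cannot separate two elements of the spanning set whose difference is concentrated in lower degree. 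So the correct assessment is: your reduction of the statement to the preceding conjecture is sound and more detailed than anything in the paper, but no proof of the statement itself exists either in your proposal or in the paper, where it is verified only computationally for $n\leq 17$.
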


\end{document}